\newcommand{\be} {\begin{eqnarray*}}
\newcommand{\ee} {\end{eqnarray*}}
\newcommand{\dotp}[2]{\left\langle #1, #2\right\rangle}
\newcommand{\abs}[1]{\left\vert#1\right\vert}
\newcommand{\norm}[1]{\left\Vert#1\right\Vert}
\newcommand \bbP{\mathbb{P}}
\newcommand \bbE{\mathbb{E }}
\newcommand \bbR{\mathbb{R}}
\def\T{{ \mathrm{\scriptscriptstyle T} }}
\def\m{\mathcal}
\def\mb{\mathbb}
\def\mx{\mbox}
\newtheorem{theorem}{Theorem}[section]
\newtheorem{lemma}[theorem]{Lemma}
\newtheorem{corollary}[theorem]{Corollary}
\title{Bayesian fractional posteriors}
\author[1]{Anirban Bhattacharya\thanks{anirbanb@stat.tamu.edu}}
\author[2]{Debdeep Pati\thanks{debdeep@stat.fsu.edu}}
\author[2]{Yun Yang \thanks{yyang@stat.fsu.edu (Note: Authors are arranged in alphabetical order)}}
\affil[1]{Department of Statistics, Texas A\&M University}
\affil[2]{Department of Statistics, Florida State University}
\begin{document}
\maketitle
\begin{abstract}
\noindent We consider the {\em fractional posterior} distribution that is obtained by updating a prior distribution via Bayes theorem with a {\em fractional likelihood} function, a usual likelihood function raised to a fractional power. First, we analyze the contraction property of the fractional posterior in a general misspecified framework. 
Our contraction results only require a prior mass condition on certain Kullback-Leibler (KL) neighborhood of the true parameter (or the KL divergence minimizer in the misspecified case), and obviate constructions of test functions and sieves commonly used in the literature for analyzing the contraction property of a regular posterior. We show through a counterexample that some condition controlling the complexity of the parameter space is necessary for the regular posterior to contract, rendering additional flexibility on the choice of the prior for the fractional posterior. 
Second, we derive a novel Bayesian oracle inequality based on a PAC-Bayes inequality in misspecified models. Our derivation reveals several advantages of averaging based Bayesian procedures over optimization based frequentist procedures.
As an application of the Bayesian oracle inequality, we derive a sharp oracle inequality in the convex regression problem under an arbitrary dimension. We also illustrate the theory in Gaussian process regression and density estimation problems.
\end{abstract}
{\small \textsc{Keywords:} {\em Posterior contraction; R{\'e}nyi divergence; Misspecified models; PAC-Bayes; Oracle inequality; Convex regression.}}

\section{Introduction}

The usage of {\it fractional likelihoods} has generated renewed attention in Bayesian statistics in recent years, where one raises a likelihood function to a fractional power, and combines the resulting fractional likelihood with a prior distribution via the usual Bayes formula to arrive at a {\em power posterior} or {\em fractional posterior} distribution. Applications of fractional posteriors has been diverse, ranging from fractional Bayes factors in objective Bayesian model selection \cite{o1995fractional}, data-dependent priors for sparse estimation \cite{martin2014asymptotically,martin2014empirical}, to marginal likelihood approximation \cite{friel2008marginal} and posterior simulation \cite{gelman1998simulating}. The fractional posteriors are a special instance of {\it Gibbs posteriors} \cite{jiang2008gibbs} or {\em quasi-posteriors} \cite{chernozhukov2003mcmc}, where the negative exponent of a loss function targeted towards a specific parameter of interest is used as a surrogate for the likelihood function; see \cite{bissiri2016general} for a general framework for updating of prior beliefs using Gibbs posteriors. 

The recent surge of interest in fractional posteriors can be largely attributed to its empirically demonstrated robustness to misspecification \cite{grunwald2012safe,miller2015robust}. For correctly specified, or well-specified (non)parametric models, there is now a rich body of literature \cite{ghosal2000,shen2001rates,ghosal2007convergence} guaranteeing concentration of the posterior distribution around minimax neighborhoods of the true data generating distribution. While it can be argued that the primary objective of Bayesian nonparametric models is to relax parametric assumptions to capture finer aspects of the data,  susceptibility to model misspecification remains a potent concern. First, in many practical situations, it may be unreasonable to assume that all aspects of data generating distribution can be captured adequately via a probabilistic model, and fitting models of increasingly high complexity additionally carries the risk of overfitting. Second, even though Bayesian nonparametric models can be arbitrarily flexible, they still rely on parametric building blocks, for example, component specific distributions in mixture models, and small perturbations to these assumptions can lead to fairly drastic differences in the inference drawn from the model fit \cite{miller2015robust}. 

There is a comparatively smaller literature on large sample behavior of nonparametric Bayesian procedures under misspecification \cite{kleijn2006misspecification,de2013bayesian,ramamoorthi2015posterior}, where the general aim is to establish sufficient conditions under which the usual posterior distribution concentrates around the nearest Kullback--Leibler (KL) point to the truth inside the parameter space. However, these conditions are considerably more stringent than those in case of well-specified models, so that verification can be fairly nontrivial, along with comparatively limited scope of applicability. In fact, \cite{grunwald2014inconsistency} empirically demonstrate through a detailed simulation study that even convergence to the nearest KL point may not take place in misspecified models. They instead recommend using a fractional posterior, with a data-driven approach to choose the fractional power; see also \cite{grunwald2012safe}. More recently, \cite{miller2015robust} proposed a coarsened posterior approach to combat model misspecification, where one conditions on neighborhoods of the empirical distribution rather than on the observed data to apply Bayes formula. When the neighborhood of the empirical distribution is chosen based on the KL divergence, \cite{miller2015robust} exhibited that the resulting coarsened posterior essentially takes the form of a fractional posterior. 
%Once again, the coarsened posterior exhibited favorable behavior under misspecification. 

These observations compel us to systematically study the concentration properties of fractional posteriors. While \cite{walker2001bayesian} established consistency of power posteriors for well-specified models; see also \cite{martin2016optimal} for certain rate results; we derive rates of convergence for the fractional posterior for general non-i.i.d. models in a misspecified model framework. The sufficient conditions for the fractional posterior to concentrate at the nearest KL point turn out to be substantially simpler compared to the existing literature on misspecified models. We state our concentration results for a class of R{\'e}nyi divergence measures in a non-asymptotic environment, which in particular, imply Hellinger concentration in properly specified settings. The effect of flattening the likelihood shows up in the leading constant in the rate. The sub-exponential nature of the posterior tails allow us to additionally derive posterior moment bounds. 
%Our general result applied to well-specified models implies that the fractional posterior achieves the same rate of concentration as the usual posterior in the Hellinger metric, though one pays a price in terms of the leading constant, which is not surprising. In fact, we state our concentration results for a class of Renyi divergence measures which imply Hellinger convergence. We comment here that all our concentration results are derived in a non-asymptotic environment.  

As one of our contributions, we show that the contraction rate of the fractional posterior is entirely determined by the prior mass assigned to appropriate KL neighborhoods of the true distribution, bypassing the construction of sieves and testing arguments in the existing theory \cite{barron1999,ghosal2000,kleijn2006misspecification}. One practically important consequence is that concentration results can be established for the fractional posterior for a much broader class of priors compared to the regular posterior. We provide several examples on usage of heavy tailed hyperpriors in density estimation and regression, where the fractional posterior provably concentrates at a (near) minimax rate, while the regular posterior has inconclusive behavior. Another novel application of our result lies in shape constrained function estimation. Obtaining metric entropy estimates in such problems pose a stiff technical challenge and constitutes an active area of research \cite{guntuboyina2013covering}. The fractional posterior obviates the need to obtain such entropy estimates en route to deriving concentration bounds. 

As a second contribution, we develop oracle inequalities for the fractional posterior based on a new PAC-Bayes inequality~\cite{catoni2004statistical,catoni2007,shawe1997pac,mcallester1998some,Dalalyan2008,guedj2013pac}  in a fully general Bayesian model. Many previous results on PAC-Bayes type inequalities are specifically tailored to classification (bounded loss, \cite{catoni2004statistical,catoni2007,shawe1997pac}) or regression (squared loss, \cite{shawe1997pac,leung2006information,Dalalyan2008,guedj2013pac}) problems.  Moreover, in the machine learning literature, a PAC-Bayes inequality is primarily used as a computational tool for controlling the generalization error by optimizing its upper bound over a restricted class of ``posterior" distributions~\cite{catoni2004statistical,catoni2007}. There is a need to develop a general PAC-Bayes inequality and an accompanied general theory for analyzing the Bayesian risk that can be applied to a broader class of statistical problems.
%The PAC-Bayes inequality developed in this paper aims to fill this gap. 
%There is also a deep connection between our investigations of the fractional posterior and the PAC-Bayesian literature \cite{shawe1997pac,mcallester1998some,Dalalyan2008,guedj2013pac}, where the usage of fractional likelihoods has been advocated in Gaussian-like models. In a Gaussian regression problem, \cite{leung2006information} proposed aggregating least squares estimators from various sub-models using a weighted average, where the weights can be interpreted as the posterior model probability under a flat prior on the model specific regression coefficients combined with a fractional likelihood. \cite{dalalyan2007aggregation} extended the formulation to the nonparametric regression context and derived an unbiased risk estimator for the risk of the posterior mean of a fractional posterior arising from a general class of likelihood functions which contained the Gaussian model; see also \cite{Dalalyan2008}. Our results extend and generalize these results in various directions as discussed subsequently in Section ??.  
In this paper, we derive an oracle type inequality for Bayesian procedures, which will be referred to as a {\em Bayesian oracle inequality} (BOI), based on a new PAC-Bayes inequality. Similar to the local Rademacher complexity \cite{bartlett2005} or local Gaussian complexity \cite{Bartlett:2003} in a frequenstist oracle inequality (FOI) for penalized empirical risk minimization procedures \cite{koltchinskii2011oracle,koltchinskii2006}, a BOI also involves a penalty term, which we refer to as {\em local Bayesian complexity}, that characterizes the local complexity of the parameter space. Roughly speaking, the local Bayesian complexity is defined as the inverse sample size times the negative logarithm of the prior mass assigned to certain Kullback-Leibler neighbourhood around the (pseudo) true parameter. In the special case when the prior distribution is close to be ``uniform'' over the parameter space, the local Bayesian complexity becomes the inverse sample size times a local covering entropy, and our BOI recovers the convergence rates derived from local covering conditions \cite{lecam1973}. Moreover, our BOI naturally leads to {\em sharp} oracle inequalities when the model is misspecified. For example, when applied to convex regression, we derive a sharp oracle inequality with minimax-optimal (up to $\log n$ factors) excess risk bound that extends the recent sharp oracle inequality obtained in \cite{Bellec:2015} from dimension one to general dimension $d\geq 1$.  

Last but not the least, our analysis reveals several potential advantages of averaging based Bayesian procedures over optimization based frequentist procedures.
First, due to the averaging nature of a Bayesian procedure, our averaging case analysis leading to a BOI is significantly simpler than a common worst case analysis leading to a FOI. For example, a local average type excess risk bound from a Bayesian procedure allows us to use simple probability tools, such as the Markov inequality and Chebyshev's inequality, to obtain a high probability bound for the excess risk, since the expectation operation exchanges with the averaging (integration) operation. This is different from a local supremum type excess risk from a optimization procedure, where more sophisticated empirical process tools are exploited to obtain a high probability bound for excess risk \cite{lugosi2004,bartlett2005,VandeGeer:00,van1996weak}, due to the non-exchangeability between the expectation operation and the supremum operation. 
For further details about the comparison between BOI and FOI, please refer to Section~\ref{subsec:PAC-Bayes}.
Second, a Bayesian procedure naturally leads to adaptation to unknown hyperparameters or tuning parameters. We show that by placing a hyper-prior that distributes proper weights to different levels of the hyperparameter, a BOI adaptively leads to the optimal rate corresponding to the best choice of the hyperparameter.
% In contrast, most common ways to select tuning parameters in a frequentist rely on cross-validation or data-splitting. These procedures use some proportion of the data for parameter estimation after learning the tuning parameter via the rest, and do not efficiently use all the data. \tcr{Delete the last line?}

The rest of the paper is organized as follows. 
%Notation used throughout the article are introduced in \S \ref{sec:prelim}. 
The main results of the paper are stated in \S \ref{sec:main}, with contraction results in \S \ref{subsec:conc}, and the PAC-Bayesian inequality and Bayesian oracle inequality in \S \ref{subsec:PAC-Bayes}. Applications to convex regression, Gaussian process regression and density estimation are discussed in \S \ref{sec:exp}. All proofs are deferred to \S\ref {section:proof}.

\section{Preliminaries}\label{sec:prelim}
We begin by introducing notation, and then briefly review R{\'e}nyi divergences as our key metric characterizing the contraction of fraction posteriors.

\subsection{Notation}
Let $C[0, 1]^d$ and $C^{\alpha}[0, 1]^d$ denote the space of continuous functions and the H\"{o}lder space of $\alpha$-smooth functions $f: [0, 1]^d \to \mathbb{R}$, respectively, endowed with the supremum norm $\norm{f}_{\infty} = \sup_{ t \in [0, 1]^d } \abs{ f(t) }$. For $\alpha > 0$, the H\"{o}lder space $C^{\alpha}[0, 1]^d$ consists of functions $f \in C[0, 1]^d$ that have bounded mixed partial derivatives up to order  $\lfloor \alpha \rfloor$, with the partial derivatives of order $\lfloor \alpha \rfloor$ being Lipschitz continuous of order $\alpha - \lfloor \alpha \rfloor$. Let $\norm{\cdot}_1$ and $\norm{\cdot}_2$ respectively denote the $L_1$ and $L_2$ norm on $[0, 1]^d$ with respect to the Lebesgue measure (i.e., the uniform distribution). To distinguish the $L_2$ norm with respect to the Lebesgue measure on $\mb R^d$, we use the notation $\norm{\cdot}_{2, d}$.  

%Let us denote the largest integer that is strictly smaller than $\beta$ by 
%$\lfloor \beta \rfloor$.  Let $\mathcal{Y} \subset \mathbb{R}^d$.  
%For $L:\mathcal{Y}\rightarrow[0,\infty)$, $\tau_0 \geq 0$, and $\beta>0$,
%a class of locally H\"older functions, $\mathcal{C}^{\beta,L,\tau_0}$, consists of 
%$f:\mathbb{R}^d\rightarrow \mathbb{R}$ such that for $k=(k_1,\ldots,k_d)$, $k_1+\cdots+k_d \leq \lfloor \beta \rfloor$, mixed partial derivative of order $k$, $D^k f$, is finite and for $k_1+\cdots+k_d = \lfloor \beta \rfloor$ and $\Delta y \in \mathcal{Y}$,
%\[
%|D^k f (y+\Delta y) - D^k f (y)| \leq L(y) ,\Delta y,^{\beta-\lfloor \beta \rfloor} e^{\tau_0 ,\Delta y,^2}.
%\]
For a finite set $A$, let $|A|$ denote the cardinality of $A$.
The set of natural numbers is denoted by  
$\mathbb{N}$. $a \lesssim b$ denotes $a \le C b$ for some constant $C > 0$. $J(\epsilon, A, \rho)$ denotes the $\epsilon$-covering number of the set $A$ with respect to the metric $\rho$. 
The $m$-dimensional simplex is denoted by $\Delta^{m-1}$. $I_k$ stands for the $k \times k$ identity matrix. Let $\phi_{\mu,\sigma}$ denote a multivariate normal density with mean $\mu \in \mathbb{R}^k$ and covariance 
matrix $\sigma^2 I_k$ (or a diagonal matrix with squared elements of $\sigma$ on the diagonal,
when $\sigma$ is a $k$-vector).

\subsection{ R{\'e}nyi divergences}\label{sec:renyi}
Let $P$ and $Q$ be probability measures on a common probability space with a dominating measure $\mu$, and let $p = dP/d\mu, q = dQ/d \mu$. The Hellinger distance $h^2(p, q) = (1/2) \int (\sqrt{p} - \sqrt{q})^2 d \mu = 1 - A(p, q)$, where $A(p, q) = \int \sqrt{p q} \, d \mu$ denotes the Hellinger affinity. Let $D(p, q) = \int p \log(p/q) d\mu$ denote the Kullback--Leibler (KL) divergence between $p$ and $q$. For any $\alpha \in (0, 1)$, let
\begin{align}\label{eq:renyi_def}
D_{\alpha}(p, q) = \frac{1}{\alpha-1} \log \int p^{\alpha} q^{1 - \alpha} d\mu 
\end{align}
denote the R{\'e}nyi divergence of order $\alpha$. Let us also denote $A_{\alpha}(p, q) = \int p^{\alpha} q^{1 - \alpha} d\mu = e^{-(1 - \alpha) D_{\alpha}(p, q)}$, which we shall refer to as the $\alpha$-affinity. When $\alpha = 1/2$, the $\alpha$-affinity equals the Hellinger affinity.  We recall some important inequalities relating the above quantities; additional details and proofs can be found in \cite{van2014renyi}. 

\noindent {\bf (R1)} $0 \le A_{\alpha}(p, q) \le 1$ for any $\alpha \in (0, 1)$, which in particular implies that $D_{\alpha}(p, q) \ge 0$ for any $\alpha \in (0, 1)$. 

\noindent {\bf (R2)} $D_{1/2}(p, q) = -2 \log A(p, q) = -2 \log \{ 1 - h^2(p, q) \} \ge 2 h^2(p, q)$ using the inequality $\log(1 + t) < t$ for $t > -1$. 

\noindent {\bf (R3)} For fixed $p, q$, $D_{\alpha}(p, q)$ is increasing in the order $\alpha \in (0, 1)$. Moreover, the following two-sided inequality shows the equivalence of $D_{\alpha}$ and $D_{\beta}$ for $0 < \alpha \le \beta < 1$:
$$
\frac{\alpha}{\beta} \frac{1 - \beta}{1 - \alpha} D_{\beta} \le D_{\alpha} \le D_{\beta}, \quad 0 < \alpha \le \beta < 1. 
$$
%To see this, let $\alpha \le \beta \in (0, 1)$. The function $x \mapsto x^{(\alpha-1)/(\beta - 1)}$ is strictly convex. Then, by Jensen's inequality, 
%\begin{align*}
%\frac{1}{\alpha-1} \log \int p^{\alpha} q^{1-\alpha} d \mu
%& = \frac{1}{\alpha-1}  \log \int \bigg( \frac{q}{p} \bigg)^{(1 - \beta) \frac{\alpha - 1}{\beta - 1}} dP \\
%&\le \frac{1}{\beta-1} \log \int \bigg( \frac{q}{p} \bigg)^{(1 - \beta)} dP.
%\end{align*}

\noindent {\bf (R4)} By an application of L'Hospital's rule, $\lim_{\alpha \to 1_{-}} D_{\alpha}(p, q) = D(p, q)$.

\section{Contraction and Bayesian oracle inequalities for fractional posteriors}\label{sec:main}
In this section, we present our main results. To begin with, we introduce the background including the definition of fractional posterior distributions in Bayesian procedures in \S\ref{section:background}. Then we present our results on the contraction of fraction posterior distributions in \S\ref{subsec:conc}, and Bayesian oracle inequalities based on PAC-Bayes type bounds in \S\ref{subsec:PAC-Bayes}.

\subsection{Background}\label{section:background}
We will present our theory on the large sample properties of fractional posteriors in its full generality by allowing the model to be misspecified and the observations, denoted by $X^{(n)}=(X_1,X_2,\ldots,X_n)$, to be neither identically nor independently distributed (abbreviated as non-i.i.d.) \cite{ghosal2007convergence}. Our non-i.i.d. result can be applied to models with nonindependent observations such as Gaussian time series and Markov processes, or models with independent, nonidentically distributed (i.n.i.d.) observations such as Gaussian regression and density regression.

More specifically, let $(\mathcal{X}^{(n)},\mathcal{A}^{(n)}, \bbP^{(n)}_\theta:\, \theta\in \Theta)$ be a sequence of statistical experiments with observations $X^{(n)}$, where $\theta$ is the parameter of interest in arbitrary parameter space $\Theta$, and $n$ is the sample size. For each $\theta$, let $\bbP^{(n)}_\theta$ admit a density $p_{\theta}^{(n)}$ relative to a $\sigma$-finite measure $\mu^{(n)}$. Assume that $(x,\theta)\to p_{\theta}^{(n)}(x)$ is jointly measurable relative to $\mathcal{A}^{(n)} \otimes\mathcal{B}$, where $\mathcal{B}$ is a $\sigma$-field on $\Theta$. We place a prior distribution $\Pi_n$ on $\theta\in\Theta$, and define the fractional likelihood of order $\alpha \in (0, 1)$ to be the usual likelihood raised to power $\alpha$:
\begin{align}\label{eq:general_frac_lik}
L_{n, \alpha}(\theta) = \bigg[ p^{(n)}_\theta (X^{(n)}) \bigg]^{\alpha}. 
\end{align}
Let $\Pi_{n, \alpha}(\cdot)$ denote the posterior distribution obtained by combining the fractional likelihood $L_{n, \alpha}$ with the prior $\Pi_n$, that is, for any measurable set $B \in\mathcal{B}$, 
\begin{align}\label{eq:frac_post}
\Pi_{n, \alpha}( B\,| X^{(n)}) = \frac{ \int_{B} L_{n, \alpha}(\theta)\,  \Pi_n(d\theta)}{ \int_{\Theta} L_{n, \alpha}(\theta) \, \Pi_n(d\theta)} = \frac{\int_B e^{-\alpha\, r_{n}(\theta,\, \theta^\dagger)}\, \Pi_n(d\theta)}{\int_{\Theta} e^{-\alpha\, r_{n}(\theta,\, \theta^\dagger)}\, \Pi_n(d\theta)},
\end{align} 
where $r_{n}(\theta,\theta^\dagger):\, = \log \{p^{(n)}_{\theta^\dagger} (X^{(n)})/p^{(n)}_\theta (X^{(n)})\}$ is the negative log-likelihood ratio between $\theta$ and any other fixed parameter value $\theta^\dagger$. For example, we may choose $\theta^\dagger$ as the parameter $\theta_0$ associated with the true data generating distribution, abbreviated as the true parameter. Clearly, $\Pi_{n,1}$ denotes the usual posterior distribution. 

We allow the model to be misspecified by allowing $\theta_0$ to lie outside the parameter space $\Theta$. In misspecified models, the point $\theta^\ast$ in $\Theta$ that minimizes the KL divergence from $\bbP_{\theta_0}^{(n)}$, that is,
\begin{align}\label{eq:KL_min}
\theta^\ast :\,=\arg\min_{\theta\in\Theta} D\big( p_{\theta_0}^{(n)}, p_{\theta}^{(n)}\big),
\end{align}
plays the role of $\theta_0$ in well-specified models \cite{kleijn2006misspecification}. In fact, we will show that the fractional posterior distribution $\Pi_{n, \alpha}$ tends to contract towards $\theta^\ast$ as $n\to\infty$. We use the divergence 
\begin{align}\label{eq:alpha_div}
D^{(n)}_{\theta_0, \alpha}(\theta, \theta^\ast):\,= \frac{1}{\alpha -1}\log A^{(n)}_{\theta_0,\alpha}(\theta, \theta^\ast),
\end{align}
referred to as the $\alpha$-divergence with respect to $\bbP^{(n)}_{\theta_0}$, or simply $\theta_0$, to measure the closeness between any $\theta\in\Theta$ and $\theta^\ast$, where 
\begin{align*}
A^{(n)}_{\theta_0,\alpha}(\theta, \theta^\ast):\,= \int \left(\frac{p^{(n)}_{\theta}}{p^{(n)}_{\theta^\ast}}\right)^\alpha p^{(n)}_{\theta_0} \, d\mu^{(n)}
\end{align*}
is an $\alpha$-affinity between $\theta$ and $\theta^\ast$ with respect to $\theta_0$.

\noindent {\bf Remark:} In the well-specified case where $\theta^\ast =\theta_0\in\Theta$, $A^{(n)}_{\theta_0,\alpha}$ reduces to the usual $\alpha$-affinity defined in \S \ref{sec:renyi}, 
and $D^{(n)}_{\theta_0,\alpha}$ becomes the R{\'e}nyi divergence of order $\alpha$ between $p^{(n)}_{\theta}$ and $p^{(n)}_{\theta_0}$:
\begin{align}\label{Eqn:Renyi}
D^{(n)}_{\alpha}(\theta, \theta_0) = D_{\alpha}\big(p^{(n)}_{\theta}, p^{(n)}_{\theta_0}\big) = \frac{1}{\alpha-1} \log \int \{p^{(n)}_\theta\}^{\alpha} 
\{p^{(n)}_{\theta_0}\}^{1 - \alpha} d\mu^{(n)}.
\end{align}
Note we drop $\theta_0$ from the subscript when $\theta^* = \theta_0$. 

\

In general, $D^{(n)}_{\theta_0,\alpha}$ continues to define a divergence measure that satisfies $D^{(n)}_{\theta_0, \alpha}(\theta,\theta^\ast) \geq 0$ for $\theta\in\Theta$ and $D^{(n)}_{\theta_0, \alpha}(\theta^\ast,\theta^\ast)=0$ in a variety of statistical problems. For example, in the normal means problem $Y \sim N(\mu, \sigma^2 I_n)$, $D^{(n)}_{\theta_0,\alpha}$ defines a divergence measure if the parameter space for the mean $\mu$ is a convex set in $\mb R^n$; see \S \ref{Section:NPregression} for more details. The convexity condition is satisfied by a broad class of problems, including sparse problems, isotonic regression, and convex regression \cite{chatterjee2014new}. In the density estimation context, the following Lemma shows that $D^{(n)}_{\theta_0,\alpha}$ defines a divergence measure if the parameter space of densities is convex. 
 
\begin{lemma}[Property of $\alpha$-divergences]\label{Lem:divergence}
If $\{ p_{\theta}^{(n)} : \theta \in \Theta\}$ is convex\footnote{Given any $\theta, \theta' \in \Theta$, and $\omega \in (0, 1)$, there exists $\bar{\theta} \in \Theta$ such that $p_{\bar{\theta}}^{(n)} = \omega p_{\theta}^{(n)} + (1 - \omega) p_{\theta'}^{(n)}$} or $\theta^*$ is an interior point of $\Theta$, then $0<A^{(n)}_{\theta_0,\alpha}(\theta,\theta^\ast) \leq 1$ for any $\alpha\in(0,1)$. Therefore, $D^{(n)}_{\theta_0,\alpha}$ defines a divergence that satisfies $D^{(n)}_{\theta_0, \alpha}(\theta,\theta^\ast) \geq 0$ for $\theta\in\Theta$ and $D^{(n)}_{\theta_0, \alpha}(\theta^\ast,\theta^\ast)=0$.
\end{lemma}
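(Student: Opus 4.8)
My plan is to prove the substantive half, $A^{(n)}_{\theta_0,\alpha}(\theta,\theta^\ast)\le 1$ for every $\alpha\in(0,1)$; the positivity is immediate, since the integrand $(p_\theta^{(n)}/p_{\theta^\ast}^{(n)})^\alpha\,p_{\theta_0}^{(n)}$ is nonnegative, and at $\theta=\theta^\ast$ it equals $p_{\theta_0}^{(n)}$ on $\{p_{\theta^\ast}^{(n)}>0\}\supseteq\{p_{\theta_0}^{(n)}>0\}$, so that $A^{(n)}_{\theta_0,\alpha}(\theta^\ast,\theta^\ast)=1$. Once $A^{(n)}_{\theta_0,\alpha}\le 1$ is in hand, the divergence properties follow at once, since $D^{(n)}_{\theta_0,\alpha}=\tfrac{1}{\alpha-1}\log A^{(n)}_{\theta_0,\alpha}\ge 0$ (the prefactor is negative) and equals $0$ at $(\theta^\ast,\theta^\ast)$. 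Throughout I suppress the superscript $(n)$, write $D$ for the KL divergence, and use $D(p_{\theta_0},p_{\theta^\ast})<\infty$, so that $p_{\theta^\ast}>0$ on $\{p_{\theta_0}>0\}$.

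First I would reduce to the exponent $1$. For $t\ge 0$ and $\alpha\in(0,1)$, concavity of $t\mapsto t^\alpha$ and its tangent line at $t=1$ give the elementary bound $t^\alpha\le\alpha t+(1-\alpha)$; taking $t=p_\theta/p_{\theta^\ast}$ and integrating against $p_{\theta_0}$,
\[
A_{\theta_0,\alpha}(\theta,\theta^\ast)=\int\Big(\tfrac{p_\theta}{p_{\theta^\ast}}\Big)^{\!\alpha}p_{\theta_0}\,d\mu\ \le\ \alpha\int\tfrac{p_\theta}{p_{\theta^\ast}}\,p_{\theta_0}\,d\mu+(1-\alpha),
\]
so it suffices to show $\int(p_\theta/p_{\theta^\ast})\,p_{\theta_0}\,d\mu\le 1$.

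For this I would invoke the first-order optimality of $\theta^\ast$ along a perturbation toward $p_\theta$. Set $r_\omega=(1-\omega)p_{\theta^\ast}+\omega p_\theta$; this is again a density in the family, for all $\omega\in[0,1)$ when the family of densities is convex, and for all sufficiently small $\omega>0$ when $\theta^\ast$ is an interior point (so that $p_{\theta^\ast}$ can be moved toward $p_\theta$ inside the family). Since $\theta^\ast$ minimizes $D(p_{\theta_0},\cdot)$ over the family, $D(p_{\theta_0},r_\omega)\ge D(p_{\theta_0},p_{\theta^\ast})$, and expanding $D$ and cancelling the common $\int p_{\theta_0}\log p_{\theta_0}$ term turns this into
\[
\int p_{\theta_0}\,\log\!\Big(1+\omega\big(\tfrac{p_\theta}{p_{\theta^\ast}}-1\big)\Big)\,d\mu\ \le\ 0,\qquad\text{for all small }\omega>0.
\]
Dividing by $\omega$ and letting $\omega\downarrow 0$, I would use that for each fixed $u\ge-1$ the map $\omega\mapsto\omega^{-1}\log(1+\omega u)$ increases to $u$ as $\omega\downarrow 0$ and is bounded below uniformly on $\omega\in(0,1/2]$ (since $u\ge-1$ gives $1+\omega u\ge 1-\omega$); the monotone convergence theorem then yields $\int p_{\theta_0}\big(\tfrac{p_\theta}{p_{\theta^\ast}}-1\big)\,d\mu\le 0$, i.e. $\int(p_\theta/p_{\theta^\ast})\,p_{\theta_0}\,d\mu\le 1$ (in particular this integral is finite). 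Combining with the previous display completes the argument.

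The step I expect to be the main obstacle is this passage to the limit: a priori $\int(p_\theta/p_{\theta^\ast})\,p_{\theta_0}\,d\mu$ could be $+\infty$ and the integrand $\omega^{-1}\log(1+\omega(p_\theta/p_{\theta^\ast}-1))$ changes sign, so one cannot simply appeal to dominated convergence; it is the monotonicity in $\omega$ of $\omega\mapsto\omega^{-1}\log(1+\omega u)$ together with the uniform lower bound that makes monotone convergence applicable and forces the inequality even when the limit is $+\infty$. A secondary point to keep track of is the admissibility of $r_\omega$: convexity of the family provides it for all $\omega\in[0,1)$, whereas interiority of $\theta^\ast$ only provides it on a right-neighborhood of $0$; but since the limiting argument needs only arbitrarily small $\omega>0$, either hypothesis is enough.
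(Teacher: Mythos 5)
Your proof is correct and takes essentially the same approach as the paper's: both reduce the claim to $\int (p_{\theta}^{(n)}/p_{\theta^\ast}^{(n)})\,p_{\theta_0}^{(n)}\,d\mu^{(n)} \le 1$ and obtain it from the first-order optimality of $\theta^\ast$ along the mixture perturbation $(1-\omega)p_{\theta^\ast}^{(n)}+\omega p_{\theta}^{(n)}$, passing to the limit $\omega \downarrow 0$ by monotone-type convergence. The only cosmetic differences are your use of the tangent bound $t^{\alpha}\le \alpha t+(1-\alpha)$ where the paper uses Jensen's inequality $A_{\theta_0,\alpha}^{(n)}\le \{A_{\theta_0,1}^{(n)}\}^{\alpha}$, and your single monotone-convergence argument (using monotonicity in $\omega$ for all $u\ge -1$ plus a uniform lower bound) in place of the paper's split into the regions $p_{\theta}^{(n)}\le p_{\theta^\ast}^{(n)}$ and $p_{\theta}^{(n)}>p_{\theta^\ast}^{(n)}$.
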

When $\alpha\in(0,1)$, the proof of the lemma implies that $D^{(n)}_{\theta_0,\alpha}(\theta,\theta^\ast)=0$ if and only if $p^{(n)}_{\theta}=p^{(n)}_{\theta^\ast}$ on the support of $\bbP^{(n)}_{\theta_0}$, since $x^\alpha$ is a strictly concave function on $[0,\infty)$.

\

\noindent We will primarily focus on the following two cases in this paper. 

\paragraph{Independent and identically distributed observations:} When $X_1,X_2,\ldots, X_n$ are i.i.d.~observations, $\bbP_{\theta}^{(n)}$ equals the $n$-fold product measure $\bbP^n_{\theta}:\,=\otimes_{i=1}^n \bbP_{\theta}$, where $\bbP_{\theta}$ is the common distribution for the observations. $\mathcal{A}^{(n)}$ also takes a product form as $\mathcal{A}^n:\,= \otimes_{i=1}^n \mathcal{A}$, with $\mathcal{A}$ the common $\sigma$-field. The fractional likelihood function is
\begin{align}\label{eq:iid_frac_lik}
L_{n, \alpha}(\theta) = \prod_{i=1}^n \big\{ p_{\theta}(X_i) \big\}^{\alpha},
\end{align}
where $p_{\theta}$ is the common density indexed by $\theta\in\Theta$. The negative log-likelihood ratio $r_n(\theta,\theta^\dagger) = \sum_{i=1}^n  \log \{p_{\theta^\dagger} (X_i)/p_\theta (X_i)\}$ becomes the sum of individual log density ratios. Moreover, the $\alpha$-affinity and divergence can be simplified as $A^{(n)}_{\theta_0,\alpha}(\theta, \theta^\ast) = \big\{A_{\theta_0,\alpha}(\theta, \theta^\ast)\big\}^n$ and $D^{(n)}_{\theta_0,\alpha}(\theta, \theta^\ast) = n\, D_{\theta_0,\alpha}(\theta, \theta^\ast)$, where $A_{\theta_0,\alpha}(\theta, \theta^\ast)$ and $D_{\theta_0,\alpha}(\theta, \theta^\ast)$ respectively are the $\alpha$-affinity and divergence for $n=1$.

\paragraph{Independent observations:} In this case as well, $X_1,X_2,\ldots, X_n$ are independent observations. However, the $i$th observation $X_i$ has an index-dependent distribution
$\bbP_{\theta,i}$, which possesses a density $p_{\theta,i}$ relative to a $\sigma$-finite measure $\mu_i$ on $(\mathcal{X}_i,\mathcal{A}_i)$. Thus, we take the measure $\bbP_{\theta}^{(n)}$ equal to the product measure $\otimes_{i=1}^n \bbP_{\theta,i}$ on the product measurable space $\otimes_{i=1}^n (\mathcal{X}_i,\mathcal{A}_i)$.
The fractional likelihood function takes a product form as
\begin{align}\label{eq:iid_frac_lik}
L_{n, \alpha}(\theta) = \prod_{i=1}^n \big\{ p_{\theta,i}(X_i) \big\}^{\alpha},
\end{align}
and the negative log-likelihood ratio $r_n(\theta,\theta^\dagger) = \sum_{i=1}^n  \log \{p_{\theta^\dagger,i} (X_i)/p_{\theta,i} (X_i)\}$. The $\alpha$-affinity and divergence can be decomposed, respectively, as $A^{(n)}_{\theta_0,\alpha}(\theta, \theta^\ast) = \prod_{i=1}^n A_{\theta_0,\alpha, i}(\theta, \theta^\ast)$ and $D^{(n)}_{\theta_0,\alpha}(\theta, \theta^\ast) = \sum_{i=1}^n D_{\theta_0,\alpha, i}(\theta, \theta^\ast)$, where $A_{\theta_0,\alpha,i}(\theta, \theta^\ast)$ and $D_{\theta_0,\alpha,i}(\theta, \theta^\ast)$ are the $\alpha$-affinity and divergence associated with the $i$th observation $X_i$.

\subsection{General concentration bounds}\label{subsec:conc}
In this subsection, we consider the asymptotic behavior of fractional posterior distributions and corresponding Bayes estimators based on non-i.i.d.~observations under the general misspecified framework. We give general results on the rate of contraction of the fractional posterior measure towards the KL minimizer $\theta^\ast$ relative to the $\alpha$-divergence $D^{(n)}_{\theta_0,\alpha}$.

For any $\theta^\dagger$, we define a specific kind of KL neighborhood of $\theta^\dagger$ with radius $\varepsilon$ as 
\begin{align}\label{eq:chEN}
B_{n}(\theta^\dagger,\varepsilon; \theta_0) = \bigg\{\theta\in\Theta:  \int p_{\theta_0}^{(n)} \log (p_{\theta^\dagger}^{(n)}/p_{\theta}^{(n)}) d\mu^{(n)} \leq n\varepsilon^2, \,
\int p_{\theta_0}^{(n)} \log^2 (p_{\theta^\dagger}^{(n)}/p_{\theta}^{(n)})  d\mu^{(n)}\leq n\varepsilon^2  \bigg\}.
\end{align}
It is standard practice to make assumptions on the prior mass assigned to such KL neighborhoods to obtain the rate of posterior concentration in misspecified models \cite{kleijn2006misspecification}. 
With these notations, we present a {\em nonasymptotic} upper bound for the posterior probability assigned to complements of $\alpha$-divergence neighborhoods of $\theta^\ast$ with respect to $\theta_0$. 

\begin{theorem}[Contraction of fractional posterior distributions] \label{Thm:contraction}
Fix $\alpha\in(0,1)$. Recall $\theta^*$ from \eqref{eq:KL_min}. Assume that $\varepsilon_n$ satisfies $n\, \varepsilon_n^2 \geq 2$ and 
\begin{align}\label{eq:prior_Mass}
\Pi_n\big(B_{n}(\theta^\ast,\varepsilon_n; \theta_0)\big) \geq e^{-n\,\varepsilon_n^2}. % \quad\mbox{and}\quad n\, \varepsilon_n^2 \geq 2. 
\end{align}
Then, for any $D\geq 2$ and $t>0$, 
\begin{align*}
\Pi_{n,\alpha} \Big(\frac{1}{n}D^{(n)}_{\theta_0,\alpha}(\theta,\,\theta^\ast) \geq \frac{D+3t}{1-\alpha}\, \varepsilon_n^2\ \Big| \, X^{(n)}\Big) \leq e^{-t\, n\, \varepsilon_n^2}
\end{align*}
holds with $\bbP^{(n)}_{\theta_0}$ probability at least $1 - 2/\{(D-1+t)^2 n \varepsilon_n^2\}$.
\end{theorem}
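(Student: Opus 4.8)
The plan is to control the fractional posterior probability of the ``bad'' set $U_n = \{\theta : D^{(n)}_{\theta_0,\alpha}(\theta,\theta^\ast) \geq n(D+3t)\varepsilon_n^2/(1-\alpha)\}$ by a ratio argument: writing $\Pi_{n,\alpha}(U_n\mid X^{(n)}) = N_n/D_n$ with numerator $N_n = \int_{U_n} e^{-\alpha r_n(\theta,\theta^\ast)}\Pi_n(d\theta)$ and denominator $D_n = \int_\Theta e^{-\alpha r_n(\theta,\theta^\ast)}\Pi_n(d\theta)$, I would obtain a high-probability lower bound on $D_n$ and an in-expectation upper bound on $N_n$, then combine them via Markov/Chebyshev. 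Here I take $\theta^\dagger = \theta^\ast$ in \eqref{eq:frac_post} so the exponent is $-\alpha r_n(\theta,\theta^\ast)$.

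\emph{Lower bound on the denominator.} Restricting the integral to $B_n := B_n(\theta^\ast,\varepsilon_n;\theta_0)$ and using Jensen's inequality on $\log(D_n/\Pi_n(B_n)) \geq \Pi_n(B_n)^{-1}\int_{B_n}(-\alpha r_n(\theta,\theta^\ast))\Pi_n(d\theta)$, I get $D_n \geq \Pi_n(B_n)\, e^{-\alpha \bar r_n}$ where $\bar r_n$ is the average of $r_n(\theta,\theta^\ast)$ over $B_n$ under $\Pi_n$. Since $\mathbb{E}_{\theta_0} r_n(\theta,\theta^\ast) = \int p_{\theta_0}^{(n)}\log(p_{\theta^\ast}^{(n)}/p_\theta^{(n)})d\mu^{(n)} \leq n\varepsilon_n^2$ on $B_n$ and the second-moment bound in \eqref{eq:chEN} gives $\mathrm{Var}_{\theta_0}(r_n(\theta,\theta^\ast)) \leq n\varepsilon_n^2$, Fubini plus Chebyshev yields that $\bar r_n \leq n\varepsilon_n^2 + \sqrt{n\varepsilon_n^2}\,(D-1+t) \leq (D-1+t)\, n\varepsilon_n^2$ (using $n\varepsilon_n^2 \geq 2$, so $1 + 1/\sqrt{n\varepsilon_n^2} \leq D-1+t$ when $D \geq 2$) with $\bbP^{(n)}_{\theta_0}$-probability at least $1 - 1/\{(D-1+t)^2 n\varepsilon_n^2\}$. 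Combined with the prior-mass hypothesis \eqref{eq:prior_Mass}, this gives $D_n \geq e^{-n\varepsilon_n^2 - \alpha(D-1+t)n\varepsilon_n^2}$ on that event.

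\emph{Upper bound on the numerator.} The key identity is that $\mathbb{E}_{\theta_0}\big[e^{-\alpha r_n(\theta,\theta^\ast)}\big] = \int (p_\theta^{(n)}/p_{\theta^\ast}^{(n)})^\alpha p_{\theta_0}^{(n)} d\mu^{(n)} = A^{(n)}_{\theta_0,\alpha}(\theta,\theta^\ast) = e^{-(1-\alpha)D^{(n)}_{\theta_0,\alpha}(\theta,\theta^\ast)}$ by the very definition of the $\alpha$-affinity in \S\ref{section:background}. Hence, by Fubini, $\mathbb{E}_{\theta_0} N_n = \int_{U_n} e^{-(1-\alpha)D^{(n)}_{\theta_0,\alpha}(\theta,\theta^\ast)}\Pi_n(d\theta) \leq e^{-(1-\alpha)\cdot n(D+3t)\varepsilon_n^2/(1-\alpha)} = e^{-(D+3t)n\varepsilon_n^2}$, using the defining inequality of $U_n$ on its domain. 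By Markov's inequality, $N_n \leq e^{-(D+2t)n\varepsilon_n^2}$ with $\bbP^{(n)}_{\theta_0}$-probability at least $1 - e^{-t n\varepsilon_n^2}$.

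\emph{Combination.} On the intersection of the two events — which has probability at least $1 - 1/\{(D-1+t)^2 n\varepsilon_n^2\} - e^{-tn\varepsilon_n^2}$, and one checks $e^{-tn\varepsilon_n^2} \leq 1/\{(D-1+t)^2 n\varepsilon_n^2\}$ using $n\varepsilon_n^2 \geq 2$ and $t > 0$, so the total slack is at most $2/\{(D-1+t)^2 n\varepsilon_n^2\}$ as claimed — we get
\[
\Pi_{n,\alpha}(U_n\mid X^{(n)}) = \frac{N_n}{D_n} \leq e^{-(D+2t)n\varepsilon_n^2 + n\varepsilon_n^2 + \alpha(D-1+t)n\varepsilon_n^2}.
\]
The exponent is $n\varepsilon_n^2\big[-(D+2t) + 1 + \alpha(D-1+t)\big] = n\varepsilon_n^2\big[-(1-\alpha)(D-1) - 2t - (1-\alpha)t\big] \leq -n\varepsilon_n^2\big[(1-\alpha)\cdot 1 + 2t\big] \leq -t n\varepsilon_n^2$ since $D \geq 2$, which gives the stated bound (in fact with room to spare, which absorbs the somewhat crude constants above). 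The main obstacle is bookkeeping: making the three numerical slacks (the two Chebyshev/Markov failure probabilities and the exponent arithmetic) fit simultaneously under the single assumption $n\varepsilon_n^2 \geq 2$, so I would keep the constants flexible and tune the split of $3t$ between the Markov step and the final exponent accordingly. A minor point to handle carefully is that the denominator bound requires $B_n$ to have positive prior mass — guaranteed by \eqref{eq:prior_Mass} — and that all Fubini applications are justified by nonnegativity of the integrands.
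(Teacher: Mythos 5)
Your overall route is the same as the paper's: the numerator/denominator decomposition, the identity $\bbE^{(n)}_{\theta_0}e^{-\alpha r_n(\theta,\theta^\ast)}=e^{-(1-\alpha)D^{(n)}_{\theta_0,\alpha}(\theta,\theta^\ast)}$ combined with Fubini and Markov for the numerator is exactly the paper's bound \eqref{Eqn:NumBound}, and your denominator step (restrict to $B_n(\theta^\ast,\varepsilon_n;\theta_0)$, Jensen, then Chebyshev applied to $\int r_n(\theta,\theta^\ast)\,d\bar\rho$) is an in-line re-derivation of Lemma 8.1 of Ghosal, Ghosh and van der Vaart (2000), which the paper simply cites as \eqref{Eqn:DenBound}. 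So there is no methodological difference; the problem is the quantitative bookkeeping, which here is precisely the delicate part.

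Concretely, the Chebyshev step fails as stated: with $\mathrm{Var}\big(\int r_n\,d\bar\rho\big)\le \int \bbE^{(n)}_{\theta_0}[r_n^2]\,d\bar\rho\le n\varepsilon_n^2$, a deviation of $(D-1+t)\sqrt{n\varepsilon_n^2}$ gives failure probability at most $1/(D-1+t)^2$, \emph{not} $1/\{(D-1+t)^2 n\varepsilon_n^2\}$; to obtain the claimed probability you must take the deviation $(D-1+t)\,n\varepsilon_n^2$, which yields $\bar r_n\le (D+t)\,n\varepsilon_n^2$ (this is the form of the cited lemma). Your patch ``$1+1/\sqrt{n\varepsilon_n^2}\le D-1+t$'' is neither the relevant condition nor true for small $t$. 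After this correction, and keeping the prior-mass factor $e^{-n\varepsilon_n^2}$ as you (correctly) do, the exponent of the ratio becomes $n\varepsilon_n^2\{1+\alpha(D+t)-(D+2t)\}=n\varepsilon_n^2\{1-(1-\alpha)D-(2-\alpha)t\}$, which is at most $-t\,n\varepsilon_n^2$ only when $(1-\alpha)(D+t)\ge 1$; so the claim of ``room to spare'' is wrong for $\alpha$ close to $1$, and this is exactly where the constants must be redistributed (compare how the paper's display \eqref{Eqn:DenBound} accounts for the prior mass) to recover the stated threshold $(D+3t)/(1-\alpha)$ for all $\alpha\in(0,1)$, $D\ge 2$, $t>0$. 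Two smaller slips: your exponent algebra carries an extra $-t$ (since $-2t+\alpha t=-t-(1-\alpha)t$, not $-2t-(1-\alpha)t$), which happens to be harmless, and the inequality $e^{-tn\varepsilon_n^2}\le 1/\{(D-1+t)^2 n\varepsilon_n^2\}$ that you invoke to merge the two failure probabilities does not hold for small $t$ (the paper asserts the same), so as written the chain does not deliver the stated probability $1-2/\{(D-1+t)^2 n\varepsilon_n^2\}$ uniformly in $t>0$.
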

Theorem~\ref{Thm:contraction} characterizes the contraction of the fractional posterior measure where the posterior of $D^{(n)}_{\theta_0,\alpha}(\theta,\,\theta^\ast)$ exhibits a sub-exponentially decaying tail. As a direct consequence, we have the following corollary that characterizes the fractional posterior moments of $D^{(n)}_{\theta_0,\alpha}(\theta,\,\theta^\ast)$. 

\begin{corollary}[Fractional posterior moments]\label{Cor:contraction}
Under the conditions of Theorem~\ref{Thm:contraction}, we have that for any $k\geq 1$, 
\begin{align*}
\int \Big\{\frac{1}{n}D^{(n)}_{\theta_0,\alpha}(\theta,\,\theta^\ast)\Big\}^k\,  \Pi_{n,\alpha} (d\theta\, \big| \, X^{(n)})  \leq  \frac{C_1}{(1-\alpha)^k}\,\varepsilon_n^{2k},
\end{align*}
holds with $\bbP^{(n)}_{\theta_0}$ probability at least $1 - C_2/\{n \varepsilon_n^2\}$, where $(C_1,C_2)$ are some positive constants depending on $k$. 
\end{corollary}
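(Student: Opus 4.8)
The plan is to derive the $k$-th moment bound directly from the tail bound in Theorem~\ref{Thm:contraction} by the standard ``layer-cake'' (integration-by-parts) identity for nonnegative random variables. Write $W_n = \frac{1}{n} D^{(n)}_{\theta_0,\alpha}(\theta,\theta^\ast)$, viewed as a random variable under the fractional posterior $\Pi_{n,\alpha}(\cdot \mid X^{(n)})$, and set $M_n = \varepsilon_n^2/(1-\alpha)$ so that Theorem~\ref{Thm:contraction} reads $\Pi_{n,\alpha}(W_n \ge (D+3t) M_n \mid X^{(n)}) \le e^{-t n \varepsilon_n^2}$ for every $D \ge 2$, $t > 0$, on an event of $\bbP^{(n)}_{\theta_0}$-probability at least $1 - 2/\{(D-1+t)^2 n\varepsilon_n^2\}$.

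First I would fix a convenient value of $D$, say $D = 2$, and let $t$ vary; this gives, on a single event $\mathcal{E}_n$ of probability at least $1 - 2/\{(1+t_0)^2 n \varepsilon_n^2\}$ for a fixed reference $t_0$ (or, more carefully, one takes the intersection over a countable grid of $t$ and absorbs the resulting constant — alternatively one simply chooses one threshold $t_0$, say $t_0 = 1$, to pin down the ``good'' event, then uses the tail bound for all $t$ on that same event, which is legitimate since the good event in the theorem is the event on which the conditional Markov/Chebyshev step succeeds and does not actually depend on $t$ once $D$ is fixed — I would check this point against the proof of Theorem~\ref{Thm:contraction}). On $\mathcal{E}_n$, substituting $s = (2 + 3t) M_n$, i.e. $t = (s/M_n - 2)/3$, we get $\Pi_{n,\alpha}(W_n \ge s \mid X^{(n)}) \le \exp\{-\frac{1}{3}(s/M_n - 2)\, n\varepsilon_n^2\}$ for all $s \ge 2 M_n$. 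Then I would apply $\int W_n^k \, d\Pi_{n,\alpha} = \int_0^\infty k s^{k-1} \Pi_{n,\alpha}(W_n \ge s \mid X^{(n)})\, ds$, split the integral at $s = 2M_n$, bound the first piece by $(2M_n)^k$, and in the second piece insert the exponential tail and change variables to $u = n\varepsilon_n^2 (s/M_n - 2)/3$, turning it into $M_n^k$ times a Gamma-type integral $\int_0^\infty (\cdots)^{k-1} e^{-u}\,du$ whose value is a constant depending only on $k$ (using $n\varepsilon_n^2 \ge 2$ to keep the prefactors bounded). Collecting terms yields $\int W_n^k\, d\Pi_{n,\alpha} \le C_1(k)\, M_n^k = C_1(k)\,\varepsilon_n^{2k}/(1-\alpha)^k$ on $\mathcal{E}_n$, which is the claim with $C_2 = 2/(1+t_0)^2$ after fixing $t_0$.

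The only genuinely delicate point is the handling of the probability statement: Theorem~\ref{Thm:contraction} delivers, for each pair $(D,t)$, a good event whose probability depends on $D-1+t$, and a naive union over all $t>0$ would be vacuous. The clean resolution — and the step I expect to require the most care — is to inspect the proof of Theorem~\ref{Thm:contraction} and confirm that the underlying good event is a single event (built from one application of Markov's/Chebyshev's inequality to $\int e^{\alpha(1-\alpha)^{-1}\cdots}$-type quantities, or to a normalized likelihood ratio) that does not depend on $t$, so that the exponential tail bound holds \emph{simultaneously for all} $t>0$ on that one event; then the whole layer-cake computation proceeds pathwise on that event and no union bound is needed. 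If instead one prefers a black-box argument using only the statement of Theorem~\ref{Thm:contraction}, one takes a geometric grid $t_j = 2^j$, $j \ge 0$, intersects the corresponding good events (the probabilities of the complements sum to a convergent series $\sum_j 2/\{(1+2^j)^2 n\varepsilon_n^2\} \le C/(n\varepsilon_n^2)$), and uses monotonicity of $s \mapsto \Pi_{n,\alpha}(W_n \ge s \mid X^{(n)})$ to interpolate the tail bound between grid points at the cost of another constant factor; this also yields the stated form with $C_2$ depending only on $k$. Everything else is the routine Gamma-integral bookkeeping sketched above.
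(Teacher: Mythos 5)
Your fallback ``black-box'' argument is essentially the paper's own proof: the paper applies Theorem~\ref{Thm:contraction} on the countable grid $t=j$, $j=1,2,\ldots$, takes a union bound using $\sum_{j\ge 1}(D-1+j)^{-2}<\infty$ to get a single good event of probability $1-C/(n\varepsilon_n^2)$, interpolates to all $t\ge 1$ by monotonicity of the posterior tail, and then runs exactly your layer-cake/Gamma-integral computation, so your route (geometric grid instead of arithmetic) is the same proof up to constants. One caution: your preferred first option would fail on inspection, since the good event in the proof of Theorem~\ref{Thm:contraction} genuinely depends on $t$ — the numerator Markov bound is applied to $\int_{U_n}e^{-\alpha r_n(\theta,\theta^\ast)}\Pi_n(d\theta)$ with $U_n$ itself defined through $t$, and the denominator event uses the $t$-dependent threshold $e^{-\alpha(D+t)n\varepsilon_n^2}$ — so the countable-grid union bound is indeed required.
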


\,

\noindent {\bf Implications for well-specified models.} While Theorem \ref{Thm:contraction} and Corollary \ref{Cor:contraction} apply generally to the misspecified setting, it is instructive to first consider their implications in the well-specified setting, i.e., when the data generating parameter $\theta_0 \in \Theta$. Setting $t = 1$ in Theorem \ref{Thm:contraction} implies that the fractional posterior increasingly concentrates on $\varepsilon_n$-sized $D_{\theta_0, \alpha}^{(n)}$ neighborhoods of the true parameter $\theta_0$. In particular, given {\bf (R2)} and {\bf (R3)}, Theorem \ref{Thm:contraction} implies that for any $\alpha \in (0, 1)$, the rate of concentration of the fractional posterior $\Pi_{n, \alpha}$ in the Hellinger metric is $\varepsilon_n$. Similar concentration results for the usual posterior distribution in the Hellinger metric were established in \cite{shen2001rates,ghosal2000} for the i.i.d. case, and in \cite{ghosal2007convergence} for the non-i.i.d. case. Since the prior mass condition \eqref{eq:prior_Mass} appears as one of the sufficient conditions there as well, the fractional posterior achieves the same rate of concentration as the usual posterior (albeit up to constants) in all the examples considered in these works, which is typically minimax up to a logarithmic term for appropriately chosen priors. 
%In the i.i.d. setting \cite{shen2001rates,ghosal2000}, the neighborhood 
%$$
%B_{n}(\theta_0,\varepsilon; \theta_0) = \bigg\{\theta\in\Theta:  \int p_{\theta_0} \log (p_{\theta_0}/p_{\theta}) d\mu \leq \varepsilon^2, \,
%\int p_{\theta_0} \log^2 (p_{\theta_0}/p_{\theta}) d\mu \leq \varepsilon^2  \bigg\}. 
%$$
In addition to the prior mass condition \eqref{eq:prior_Mass}, the sufficient conditions of \cite{ghosal2007convergence} additionally require the construction of {\em sieves} $\m F_n \subset \Theta$ whose $\varepsilon_n$-entropy in the Hellinger metric is stipulated to grow in the order $\lesssim e^{ C n \varepsilon_n^2}$, and at the same time, the prior probability assigned to the complement of the sieve is required to be exponentially small, i.e., $\Pi_n(\m F_n^c) \le e^{- C' n \varepsilon_n^2}$. The existence of such sieves with suitable control over their metric entropy is a crucial ingredient of their theory, as it guarantees existence of exponentially consistent test functions \cite{birge1984tests,le1986asymptotic} to test the true density against complements of Hellinger neighborhoods of the form $\{\theta \in \m F_n : h^2\big(p_{\theta}^{(n)}, p^{(n)}_{\theta_0}\big) \ge M \varepsilon_n^2\}$. 
%Construction of such point null versus composite hypothesis tests follow the celebrated Birge-Le Cam testing theory \cite{birge1984tests,le1986asymptotic}. 

An important distinction for the fractional posterior in Theorem \ref{Thm:contraction} is that the prior mass condition {\em alone} is sufficient to guarantee optimal concentration. This is important for at least two distinct reasons. First, the condition of exponentially decaying prior mass assigned to the complement of the sieve implies fairly strong restrictions on the prior tails and essentially rules out heavy-tailed prior distributions on hyperparameters. On the other hand, a much broader class of prior choices lead to provably optimal posterior behavior for the fractional posterior. Second, obtaining tight bounds on the metric entropy in non-regular parameter spaces, for example, in shape-constrained regression problems, can be a substantially nontrivial exercise \cite{guntuboyina2013covering}, which is entirely circumvented using the fractional posterior approach. Specific examples of either kind are provided in \S \ref{sec:exp}. 

While it may be argued that the conditions on the entropy and complement probability of the sieve are only sufficient conditions, a counterexample from \cite{barron1999} suggests that some control on the complexity of the parameter space is also necessary to ensure the consistency of a regular posterior when the model space is well-specified. Specifically, in their example, the posterior tends to put all its mass on a set of distributions that are $\sqrt{2-\sqrt{2}}$ away from the true data generating distribution with respect to the Hellinger metric, even though the prior assigns positive probability over any $\varepsilon$-KL ball around the true parameter. As an implication, the fractional posterior can still achieve a certain rate of contraction for this problem even though the regular posterior is not consistent. In fact, the rate of concentration of the fractional posterior $\varepsilon_n = (1-\alpha)^{-1} n^{-1/3}$ for this problem, since their prior satisfies $\Pi_n(B_{n}(\theta_0,\varepsilon; \theta_0)) \geq e^{-C\, \varepsilon^{-1}}$ for some constant $C>0$.  Therefore, a combination of Theorem~\ref{Thm:contraction} and the counterexample in \cite{barron1999} shows that the fractional posterior has an {\em annealing effect} that can flatten the potential peculiar spikes in the regular posterior that are far away from the true parameter. However, this additional flexibility of the fractional posterior comes at a price---when the regular posterior contracts, then the $\alpha$-fractional posterior will sacrifice a factor of $(1-\alpha)^{-1}$ in the rate of contraction. 

The following theorem shows that for fixed $n$, the fractional posterior will almost surely converges to the regular posterior ($\alpha=1$) as $\alpha \to 1_{-}$.
\begin{theorem}[Regular posterior as a limit of fractional posteriors]\label{Thm:fractional_convergence}
For each $n$, we have
\begin{align*}
\bbP^{(n)}_{\theta_0}\big[\Pi_{n,1}(B\,|\,X^{(n)}) = \lim_{\alpha\to 1_{-}} \Pi_{n,\alpha}(B\,|\,X^{(n)}), \, \forall B\in\mathcal{B}\big] = 1.
\end{align*}
\end{theorem}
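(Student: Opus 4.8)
The plan is to fix a data value $x = X^{(n)}$ and view $\alpha \mapsto \Pi_{n,\alpha}(B\,|\,x)$ as a ratio of two integrals, namely $Z_\alpha(B)/Z_\alpha(\Theta)$ with $Z_\alpha(B) := \int_B \big[p^{(n)}_\theta(x)\big]^{\alpha}\,\Pi_n(d\theta)$, and to pass to the limit $\alpha\to 1_-$ inside each integral by dominated convergence. First I would restrict attention to the event $E = \{x : 0 < Z_1(\Theta) < \infty\}$, on which the regular posterior $\Pi_{n,1}(\cdot\,|\,x)$ is well defined. By Tonelli's theorem, $\int Z_1(\Theta)\,d\mu^{(n)} = \int_\Theta \big(\int p^{(n)}_\theta\,d\mu^{(n)}\big)\,\Pi_n(d\theta) = 1$, so $Z_1(\Theta) < \infty$ for $\mu^{(n)}$-a.e.\ $x$, hence $\bbP^{(n)}_{\theta_0}$-a.s.; and $Z_1(\Theta) > 0$ $\bbP^{(n)}_{\theta_0}$-a.s.\ is exactly the standing requirement that the posterior be a.s.\ well defined. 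Thus $\bbP^{(n)}_{\theta_0}(E) = 1$. (If one prefers not to invoke absolute continuity of $\bbP^{(n)}_{\theta_0}$ with respect to $\mu^{(n)}$ for the finiteness of $Z_1(\Theta)$, one can simply take $E$ to be the a.s.\ event on which $\Pi_{n,1}(\cdot\,|\,x)$ is defined and run the same argument there.)

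The key elementary input is the bound $s^{\alpha} \le 1 + s$, valid for all $s \ge 0$ and all $\alpha \in (0,1]$ (treating $s\le 1$ and $s \ge 1$ separately). Applying it with $s = p^{(n)}_\theta(x)$ gives the $\alpha$-uniform domination $\big[p^{(n)}_\theta(x)\big]^{\alpha} \le 1 + p^{(n)}_\theta(x)$, and the right-hand side is $\Pi_n$-integrable with integral $1 + Z_1(\Theta)$, which is finite on $E$. Since $\big[p^{(n)}_\theta(x)\big]^{\alpha} \to p^{(n)}_\theta(x)$ as $\alpha \to 1_-$ for every $\theta$, the dominated convergence theorem yields $Z_\alpha(B) \to Z_1(B)$ as $\alpha \to 1_-$ for every $B \in \mathcal{B}$; in particular $Z_\alpha(\Theta) \to Z_1(\Theta)$, and also $Z_\alpha(\Theta) \le 1 + Z_1(\Theta) < \infty$ so every $Z_\alpha(B)$ is finite.

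On $E$ we have $Z_1(\Theta) \in (0,\infty)$, so $Z_\alpha(\Theta)$ is bounded away from $0$ for $\alpha$ close to $1$ (in particular the fractional posterior is also well defined there), and consequently
\[
\Pi_{n,\alpha}(B\,|\,x) \;=\; \frac{Z_\alpha(B)}{Z_\alpha(\Theta)} \;\longrightarrow\; \frac{Z_1(B)}{Z_1(\Theta)} \;=\; \Pi_{n,1}(B\,|\,x) \qquad \text{as } \alpha \to 1_-,
\]
and, since for a fixed $x \in E$ the dominated convergence step applies to every $B$, this holds simultaneously for all $B \in \mathcal{B}$. As $\bbP^{(n)}_{\theta_0}(E) = 1$, this is precisely the asserted identity.

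I do not expect a genuine obstacle here: the entire content is the $\alpha$-uniform integrable domination followed by dominated convergence. The only points that require care are (i) producing a dominating function that is independent of $\alpha$ (handled by $s^{\alpha}\le 1+s$), and (ii) the measure-theoretic bookkeeping that makes the regular posterior $\bbP^{(n)}_{\theta_0}$-a.s.\ well defined so that the limiting ratio is meaningful; both are routine.
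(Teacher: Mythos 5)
Your proof is correct and follows essentially the same route as the paper: fix the data on an almost-sure event where the normalizing integral is finite, pass to the limit $\alpha \to 1_-$ in the numerator and denominator separately, and then take the ratio. The only difference is mechanical: the paper splits $\Theta$ into $\{\theta : r_n(\theta,\theta_0) < 0\}$ and $\{\theta : r_n(\theta,\theta_0) \ge 0\}$ and applies monotone plus bounded convergence, whereas you obtain the same limit in one step via dominated convergence with the $\alpha$-uniform bound $s^{\alpha} \le 1 + s$; both are valid.
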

This theorem implies that although for a fixed $\alpha\in(0,1)$, the fraction posterior has the annealing effect of flattening the posterior, it will eventually convergence to the regular posterior as $\alpha \to 1_{-}$ almost surely. This observation also justifies the empirical observation \cite{Geyer1995} that parallel tempering can boost the convergence of the posterior when the posterior contracts. However, when the posterior is ill-behaved---does not have consistency or has multimodality, then we need a very fine grid for the design of $\alpha$ as $\alpha\to 1_{-}$ in the parallel tempering algorithm, since otherwise all factional posteriors will only exhibit the one big mode around $\theta^\ast$ and miss the rest.

%\paragraph{Comparisons with previous work:}
%\textcolor{red}{To be edited}
%Defining $\{H_\alpha^*(f_0, f)\}^2 =  1- A_{\alpha}^*(f_0, f)$. If $\mathcal{F}$ is convex or $f^*$ is an interior point of $\mathcal{F}$,  then with $\bbP^{(n)}_{\theta_0}$ probability at least $(1 - \varepsilon)$,  
%\begin{align}\label{eq:bd3}
% \int  n\{H^*_{\alpha}(f_0, f)\}^2  \Pi_n(df) \le \inf_{\rho \ll \Pi} \bigg[\int r_{n,\alpha}^*(f) \rho(df) + D(\rho \vert \vert \Pi) \bigg] + \log(1/\varepsilon).
%\end{align}
%\emph{Comments on convexity},
\paragraph{Implications for misspecified models.}

A key reference for Bayesian asymptotics in infinite-dimensional misspecified models is \cite{kleijn2006misspecification}, where sufficient conditions analogous to the well-specified case were provided for the posterior to concentrate around $\theta^*$. The primary technical difficulty in showing such a result compared to the well-specified case is the construction of test functions, for which \cite{kleijn2006misspecification} proposed a novel solution. Akin to the well-specified case for the regular posterior, the sufficient conditions of \cite{kleijn2006misspecification} constitute of a prior thickness condition as in Theorem \ref{Thm:contraction}, and conditions on entropy numbers. However, the entropy number conditions (equations (2.2) and (2.5) in \cite{kleijn2006misspecification}) for the misspecified case are substantially harder to verify.  In their Lemma 2.1, a simpler sufficient condition related their entropy number condition to ordinary entropy numbers.  Further, in their Lemma 2.3, exploiting convexity of the parameter space, they established that the sufficient conditions of their Lemma 2.1 are satisfied by a weighted Hellinger distance 
\begin{eqnarray*}
h_w^2({\theta}^{(n)}, {\theta^*}^{(n)}) =  \frac{1}{4} \int \bigg(\sqrt{p_{\theta^*}^{(n)}} - \sqrt{p_{\theta}^{(n)}} \bigg)^2 \, \frac{p_{\theta_0}^{(n)}}
{p_{\theta^*}^{(n)}} ~d \mu^{(n)}, 
\end{eqnarray*}
which then amounts to obtaining entropy numbers in the weighted Hellinger metric. Such an exercise typically requires further assumptions on the behavior of $p_{\theta}^{(n)}/p_{\theta^*}^{(n)}$. For example, if $\sup_{\theta}  \abs{p_{\theta}^{(n)}/p_{\theta^*}^{(n)}}$ is finite, the ordinary Hellinger metric dominates the weighted Hellinger metric and it suffices to obtain covering numbers with respect to the ordinary Hellinger metric. Under this assumption, the authors proceeded to derive convergence rates for the regular posterior in a density estimation problem using Dirichlet process mixture priors. However, this assumption precludes the true density $p_{\theta_0}^{(n)}$ to have heavier tails than that prescribed by the model. For example, if the true density is heavier that the class of densities specified by the model,  the assumption $\sup \abs{p_{\theta}^{(n)}/p_{\theta^*}^{(n)}} < \infty$ is not satisfied.  Typically,  in the misspecified case, controlling the  prior  
mass \eqref{eq:prior_Mass} in Theorem 3.2 requires certain tail conditions on $f_0$.     However, Theorem \ref{Thm:contraction} obviates the need to verify any entropy conditions for the fractional posterior. It thus avoids the need to assume $\sup \abs{p_{\theta}^{(n)}/p_{\theta^*}^{(n)}} < \infty$, unless required to verify the prior mass condition.

For $\alpha = 1/2$, our divergence measure $D_{1/2}({\theta}^{(n)}, {\theta^*}^{(n)})$ dominates the weighted Hellinger distance in which \cite{kleijn2006misspecification}  derive their convergence rate for the density estimation problem in Theorem 3.1. This can be readily seen from 
\begin{eqnarray*}
4h_w^2({\theta}^{(n)}, {\theta^*}^{(n)})  &=& 
1 + \int \frac{p_{\theta}^{(n)}}{p_{\theta^*}^{(n)}} \, p_{\theta_0}^{(n)}d \mu^{(n)}
  - 2 \int \bigg( \frac{p_{\theta}^{(n)}}{ p_{\theta^*}^{(n)}} \bigg)^{1/2} p_{\theta_0}^{(n)} d \mu^{(n)} \\
  & \leq& 2\bigg[ 1 -   \int \bigg( \frac{p_{\theta}^{(n)}}{p_{\theta^*}^{(n)}} \bigg)^{1/2} p_{\theta_0}^{(n)}d \mu^{(n)} \bigg]  \le D_{1/2}({\theta}^{(n)}, {\theta^*}^{(n)}),
\end{eqnarray*}
where the last inequality follows from $\log x \leq x-1$ and the penultimate inequality follows from Lemma \ref{Lem:divergence}.
%Hence, using a fractional likelihood will automatically bypass the entropy conditions  making Theorem \ref{Thm:contraction} stronger and more broadly applicable. 

\subsection{PAC-Bayes bounds and Bayesian oracle inequalities}\label{subsec:PAC-Bayes}

In many problems, the performance of a (pseudo) Bayesian approach can be characterized via PAC-Bayes type inequalities \cite{shawe1997pac,mcallester1998some,guedj2013pac}. A typical PAC-Bayes inequality takes the form as
\begin{align*}
\int R(\theta,\, \theta_0)\, \Pi_{n,\alpha}(d\theta\,|\,X^{(n)}) \leq  \int S_n(\theta,\,\theta_0) \rho(d\theta) + \frac{1}{\kappa_n} D(\rho\,,\,\Pi_n)+ \mbox{Rem},\ \ \ \mbox{$\forall$ probability measure $\rho\ll \Pi_n$},
\end{align*}
where $R$ is a statistical risk function, $\kappa_n$ is some tuning parameter, $ \mbox{Rem}$ is some remainder term, and $S_n$ is some function that measures the discrepancy between $\theta$ and $\theta_0$ on the support of $\rho$.  We present a PAC-Bayes inequality for the fractional posterior distribution, where the risk function $R$ is a multiple of the $\alpha$-R{\'e}nyi divergence $D^{(n)}_\alpha$ in~\eqref{Eqn:Renyi}, and $S_n(\theta,\theta_0)$ a multiple of the negative log-likelihood ratio $r_n(\theta,\theta_0)$.

\begin{theorem}[PAC-Bayes inequality relative to $\theta_0$]\label{thm:renyi_main}
Fix $\alpha \in (0, 1)$. Then, for any $\varepsilon \in (0, 1)$, %it holds that
\begin{equation}\label{eq:renyi_bd}
\begin{aligned}
 \int \Big\{\frac{1}{n} D^{(n)}_{\alpha}(\theta, \theta_0) \Big\}\, \Pi_{n, \alpha}(d\theta\,|\,X^{(n)}) \le \frac{\alpha}{n (1 - \alpha)} \int r_{n}(\theta,\,\theta_0) \,\rho(d\theta) + &\frac{1}{n(1-\alpha)} D(\rho \,,\,\Pi_n)  + \frac{1}{n(1-\alpha)}  \log(1/\varepsilon),\\
  &\mbox{$\forall$ probability measure $\rho\ll \Pi$},
\end{aligned}
\end{equation}
with $\bbP^{(n)}_{\theta_0}$ probability at least $(1 - \varepsilon)$. 
\end{theorem}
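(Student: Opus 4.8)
The plan is to run the standard ``change of measure plus Markov'' PAC--Bayes argument, arranged so that the R\'enyi divergence $D^{(n)}_\alpha$ emerges on the left-hand side via Jensen's inequality. The one ingredient special to the fractional likelihood is the pointwise exponential-moment identity: for each fixed $\theta$,
\begin{align*}
\mathbb{E}_{\theta_0}\big[e^{-\alpha r_n(\theta,\theta_0)}\big] = \int \Big(\frac{p^{(n)}_\theta}{p^{(n)}_{\theta_0}}\Big)^{\alpha} p^{(n)}_{\theta_0}\,d\mu^{(n)} = \int \{p^{(n)}_\theta\}^{\alpha}\{p^{(n)}_{\theta_0}\}^{1-\alpha}\,d\mu^{(n)} = e^{-(1-\alpha)D^{(n)}_\alpha(\theta,\theta_0)},
\end{align*}
which is immediate from \eqref{Eqn:Renyi} and the definition of $r_n$. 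Hence $\mathbb{E}_{\theta_0}\big[\exp\{(1-\alpha)D^{(n)}_\alpha(\theta,\theta_0)-\alpha r_n(\theta,\theta_0)\}\big]=1$ for every $\theta$; integrating against $\Pi_n$ and using Tonelli (the integrand is nonnegative) gives $\mathbb{E}_{\theta_0}[U_n]=1$ with
\begin{align*}
U_n := \int \exp\{(1-\alpha)D^{(n)}_\alpha(\theta,\theta_0)-\alpha r_n(\theta,\theta_0)\}\,\Pi_n(d\theta).
\end{align*}
By Markov's inequality, $U_n\le 1/\varepsilon$ on an event $\mathcal{E}$ with $\bbP^{(n)}_{\theta_0}(\mathcal{E})\ge 1-\varepsilon$, and everything that follows is carried out on $\mathcal{E}$.

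Next I would factor out the normalizing constant of the fractional posterior. With $\bar{Z}_n:=\int e^{-\alpha r_n(\theta,\theta_0)}\,\Pi_n(d\theta)$, so that $d\Pi_{n,\alpha}(\cdot\,|\,X^{(n)})/d\Pi_n = e^{-\alpha r_n(\cdot,\theta_0)}/\bar{Z}_n$ by \eqref{eq:frac_post}, one has
\begin{align*}
U_n = \bar{Z}_n \int \exp\{(1-\alpha)D^{(n)}_\alpha(\theta,\theta_0)\}\,\Pi_{n,\alpha}(d\theta\,|\,X^{(n)}).
\end{align*}
Since $D^{(n)}_\alpha\ge 0$ by {\bf (R1)}, Jensen's inequality applied to the convex map $x\mapsto e^x$ gives $\int e^{(1-\alpha)D^{(n)}_\alpha}\,d\Pi_{n,\alpha}\ge \exp\{(1-\alpha)\int D^{(n)}_\alpha\,d\Pi_{n,\alpha}\}$, hence on $\mathcal{E}$
\begin{align*}
(1-\alpha)\int D^{(n)}_\alpha(\theta,\theta_0)\,\Pi_{n,\alpha}(d\theta\,|\,X^{(n)}) \le \log U_n - \log\bar{Z}_n \le \log(1/\varepsilon) - \log\bar{Z}_n.
\end{align*}

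It remains to bound $-\log\bar{Z}_n$ by the prior-dependent terms, which is exactly the Donsker--Varadhan variational identity: for any probability measure $\rho\ll\Pi_n$,
\begin{align*}
-\log\bar{Z}_n = -\log\int e^{-\alpha r_n(\theta,\theta_0)}\,\Pi_n(d\theta) \le \alpha\int r_n(\theta,\theta_0)\,\rho(d\theta) + D(\rho\,,\,\Pi_n),
\end{align*}
obtained by writing $d\Pi_n = (d\Pi_n/d\rho)\,d\rho$ on the support of $\rho$ and applying Jensen's inequality to the convex function $-\log$, the bound being vacuous if either term on the right is $+\infty$. Substituting this into the previous display and dividing through by $n(1-\alpha)>0$ produces \eqref{eq:renyi_bd}, simultaneously for all $\rho\ll\Pi_n$, on the event $\mathcal{E}$.

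The only delicate points are bookkeeping ones: legitimacy of the Tonelli exchange (guaranteed by nonnegativity of the integrand), the correct reading of the final inequality when $\int r_n\,d\rho$ or $D(\rho,\Pi_n)$ is infinite (nothing to prove), and the fact that $U_n<\infty$ on $\mathcal{E}$, which is what makes the Jensen step legitimate and forces $\int D^{(n)}_\alpha\,d\Pi_{n,\alpha}<\infty$ there. Beyond these I anticipate no analytic obstacle; the whole substance is the first identity, which is exactly what singles out $D^{(n)}_\alpha$ as the natural risk functional for the fractional ($\alpha<1$) posterior and which fails at $\alpha=1$.
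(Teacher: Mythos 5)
Your proof is correct and follows essentially the same route as the paper's: the same $\alpha$-affinity identity $\bbE^{(n)}_{\theta_0}\big[e^{-\alpha r_n(\theta,\theta_0)}\big]=e^{-(1-\alpha)D^{(n)}_{\alpha}(\theta,\theta_0)}$, Fubini/Tonelli, Markov's inequality, and a change-of-measure (Donsker--Varadhan) step. The only difference is packaging: the paper applies the variational lemma (with the supremum attained at the fractional posterior) before Markov and then converts back to an infimum over $\rho$, while you apply Markov to $U_n$ first and recover the identical inequality via the posterior density, Jensen on $e^{x}$, and the easy direction of the variational bound for $-\log\bar{Z}_n$ --- algebraically the same argument.
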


Theorem~\ref{thm:renyi_main} immediately implies an oracle type inequality for the Bayes estimator $\widehat{\theta}_B:\,= \int_{\Theta} \theta \, \Pi_{n, \alpha}(d\theta\,|\,X^{(n)})$ by using the convexity of $D^{(n)}_{\alpha}(\cdot, \theta_0)$ and applying Jensen's inequality,
\begin{align}\label{eq:BOI}
 \frac{1}{n} D^{(n)}_{\alpha}(\hat{\theta}_B, \, \theta_0)\le \frac{\alpha}{n (1 - \alpha)} \int r_{n}(\theta,\,\theta_0) \,\rho(d\theta) + \frac{1}{n(1-\alpha)} D(\rho \,,\,\Pi_n)  &+ \frac{1}{n(1-\alpha)}  \log(1/\varepsilon),
\end{align}
for all probability measure $\rho\ll \Pi_n$. We call this inequality a {\em Bayesian oracle inequality}. 

Let us compare the Bayesian oracle inequality (BOI) with frequentist oracle inequalities (FOI) \cite{koltchinskii2011oracle,koltchinskii2006}. For convenience, we assume that the observations are i.i.d., and use $\bbP_n$ to represent the empirical measure $\frac{1}{n}\sum_{i=1}^n \delta_{X_i}$. For a function $f:\m X \to \bbR$, 
define
\begin{align}\label{eq:FOI}
\bbP_n f = \frac{1}{n}\sum_{i=1}^n f(X_i),\qquad\mbox{and}\qquad \bbP_{\theta_0} f= \bbE_{\theta_0} f(X).
\end{align}
Under this notation, a typical FOI takes a form as
\begin{align}\label{eq:FOI}
\bbP_{\theta_0} f_{\widehat{\theta}} \leq c \inf_{\theta\in\Theta} \bbP_{\theta_0} f_\theta + \Psi_n(r_n),
\end{align}
for some leading constant $c\geq 1$, where $\widehat{\theta}$ is the estimator of $\theta$, for example, obtained by empirical risk minimization \cite{koltchinskii2006,Bartlett2004}.
Here $\m F=\{f_{\theta}:\, \m X\to\bbR,\, \theta\in \Theta\}$ is a class of functions indexed by $\theta\in\Theta$, such as, a certain loss function $\ell(\cdot, X)$ evaluated at $\theta$. The term $\inf_{\theta\in\Theta} \bbP_{\theta_0} f_\theta$ will be referred to as the approximation error term,  reflecting the smallest loss incurred by approximating $f_{\theta_0}$ from $\m F$. The second term $\Psi_n(r_n)$ in the display is an excess risk term that reflects certain local complexity measure of $\m F$, such as the local Rademacher complexity \cite{bartlett2005} or local Gaussian complexity \cite{Bartlett:2003}. $\Psi_n(r_n)$ typically serves as a high probability upper bound to the supremum of the localized empirical process, 
\begin{align}\label{eq:FOI_LC}
 \sup_{\theta\in \Theta:\, \bbP_n f_{\theta} \leq r_n} \big\{ \bbP_n f_{\theta} - \bbP_{\theta_0} f_{\theta}\big\},
\end{align}
up to some other remainder terms, where $r_n$ is a critical radius obtained as the fixed point of certain function depending on $\Psi_n$. 

Now let us look at the BOI~\eqref{eq:BOI}, which can be rewritten as
\begin{equation}\label{eq:BOIb}
\begin{aligned}
 \frac{1}{n} D^{(n)}_{\alpha}(\hat{\theta}_B, \, \theta_0)\le  &\, \frac{\alpha}{n (1 - \alpha)} \inf_{\theta\in\Theta} \bbP_{\theta_0} r_\theta + \frac{\alpha}{n (1 - \alpha)} \int \big\{\bbP_n r_{\theta} - \bbP_{\theta_0} r_\theta\big\} \,\rho(d\theta) \\
 &+ \Big\{ \frac{\alpha}{n (1 - \alpha)}  \int \big\{\bbP_{\theta_0} r_{\theta} - \inf_{\theta\in\Theta}\bbP_{\theta_0} r_\theta\big\} \,\rho(d\theta) + \frac{1}{n(1-\alpha)} D(\rho \,,\,\Pi_n) + \frac{1}{n(1-\alpha)}  \log(1/\varepsilon)\Big\},
\end{aligned}
\end{equation}
where $r_{\theta}(X)=\log\{p_{\theta_0}(X)/p_{\theta}(X)\}$ is the log density ratio.
We observe that the first term on the right hand side of \eqref{eq:BOIb} is the approximation error term, and the rest serves as the excess risk term. However, the excess risk term in BOI has two distinctions from that in FOI. First, different from the FOI that induces localization via either an iterative procedure \cite{Koltchinskii2000} or solving the fixed point of certain function \cite{bartlett2005}, a BOI induces localization via picking a measure $\rho$ concentrating around the best approximation $\arg\min_{\theta\in\Theta} \bbP_{\theta_0} r_\theta$ that balances between the average approximation error $ \int \big\{\bbP_{\theta_0} r_{\theta} - \inf_{\theta\in\Theta}\bbP_{\theta_0} r_\theta\big\} \,\rho(d\theta) $ and a penalty on the size of localization $D(\rho \,,\,\Pi_n)$. Second, in FOI the stochastic term characterizing the local complexity is based on a worse case analysis
by taking the supremum as in \eqref{eq:FOI_LC}, while BOI bounds the stochastic term based on an average case analysis via the average fluctuation
\begin{align*}
\int \big\{\bbP_n r_{\theta} - \bbP_{\theta_0} r_\theta\big\} \,\rho(d\theta).
\end{align*}
Because we can exchange the expectation with integration, this local average form allows us to use simple probability tools, such as Markov's inequality and Chebyshev's inequality, to obtain bounds for the excess risk. This is different from the local supremum form~\eqref{eq:FOI_LC}, where expectation does not exchange with supremum, and we need much more sophisticated empirical process tools such as chaining and peeling techniques to bound the excess risk (see, for example, \cite{lugosi2004,bartlett2005,VandeGeer:00,van1996weak}). 

As a simple illustration of applying Chebyshev's inequality to BOI or inequality~\eqref{eq:renyi_bd} in Theorem~\ref{thm:renyi_main} to obtain an explicit risk bound for the Bayes estimator, we present the following corollary. Recall the definition of the KL neighorhood $B_n(\theta_0,\varepsilon;\theta_0)$ defined in \eqref{eq:chEN}.

\begin{corollary}\label{cor:renyi}
Suppose $\varepsilon \in (0, 1)$ satisfies $n \, \varepsilon^2 > 2$ and $D > 1$. With $\bbP^{(n)}_{\theta_0}$ probability at least $1 - 2/\{(D-1)^2 n \varepsilon^2\}$, 
\begin{align}\label{eq:cor_renyi}
 \int \Big\{\frac{1}{n} D^{(n)}_{\alpha}(\theta, \theta_0) \Big\}\, \Pi_{n, \alpha}(d\theta\,|\,X^{(n)}) \le \frac{D\, \alpha}{1-\alpha} \, \varepsilon^2  + \Big\{ - \frac{1}{n(1-\alpha)} \log \Pi_n(B_n(\theta_0,\varepsilon;\theta_0)) \Big\}.
\end{align}
In particular, if we let $\varepsilon_n$ to be the Bayesian critical radius that is a stationary point of
\begin{align*}
 - \frac{\log \Pi_n(B_n(\theta_0,\varepsilon;\theta_0)) }{n\, \varepsilon} = \varepsilon,
\end{align*}
then with $\bbP^{(n)}_{\theta_0}$ probability at least $1 - 2/\{(D-1)^2 n \varepsilon_n^2\}$, 
\begin{align*}
 \int \Big\{\frac{1}{n} D^{(n)}_{\alpha}(\theta, \theta_0) \Big\}\, \Pi_{n, \alpha}(d\theta\,|\,X^{(n)}) \le \frac{D\,  \alpha + 1}{1-\alpha} \, \varepsilon_n^2.
\end{align*}
\end{corollary}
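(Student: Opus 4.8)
\noindent The plan is to deduce \eqref{eq:cor_renyi} from the PAC--Bayes inequality \eqref{eq:renyi_bd} of Theorem~\ref{thm:renyi_main} by a single well-chosen competing measure $\rho$, and then to control the only random quantity that remains on its right-hand side by Chebyshev's inequality. Abbreviate $B = B_n(\theta_0,\varepsilon;\theta_0)$ and take $\rho$ to be the prior conditioned on $B$, i.e.\ $\rho(\cdot) = \Pi_n(\,\cdot\,\cap B)/\Pi_n(B)$; this is a probability measure with $\rho\ll\Pi_n$ whenever $\Pi_n(B)>0$ (implicit in the statement, and automatic in the ``in particular'' case). Since $d\rho/d\Pi_n = \Pi_n(B)^{-1}\ind_B$, the divergence collapses to $D(\rho,\Pi_n) = -\log\Pi_n(B)$, producing the second term of \eqref{eq:cor_renyi} after division by $n(1-\alpha)$.

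Next I would estimate the first two $\bbP^{(n)}_{\theta_0}$-moments of $\int r_n(\theta,\theta_0)\,\rho(d\theta) = \int_B r_n(\theta,\theta_0)\,\rho(d\theta)$, viewed as a function of $X^{(n)}$. Fubini and the first defining inequality of $B$ in \eqref{eq:chEN} give $\bbE_{\theta_0}\!\int_B r_n(\theta,\theta_0)\,\rho(d\theta)\le n\varepsilon^2$. For the second moment, Jensen's inequality in the form $\big(\int_B r_n\,d\rho\big)^2\le\int_B r_n^2\,d\rho$ moves the square inside the $\rho$-average; then Fubini and the second defining inequality of $B$ give $\bbE_{\theta_0}\big(\int_B r_n(\theta,\theta_0)\,\rho(d\theta)\big)^2\le n\varepsilon^2$, so the variance is $\le n\varepsilon^2$ too. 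Chebyshev's inequality then yields $\bbP^{(n)}_{\theta_0}\big(\int_B r_n\,d\rho\ge D\,n\varepsilon^2\big)\le n\varepsilon^2/\big((D-1)^2 n^2\varepsilon^4\big) = 1/\{(D-1)^2 n\varepsilon^2\}$ (using $\bbE_{\theta_0}\!\int_B r_n\,d\rho\le n\varepsilon^2$ and $D>1$ to bound the deviation below by $(D-1)n\varepsilon^2$); $n\varepsilon^2>2$ makes this smaller than one.

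On the intersection of $\{\int_B r_n\,d\rho\le D\,n\varepsilon^2\}$ with the event of $\bbP^{(n)}_{\theta_0}$-probability at least $1-\delta$ on which \eqref{eq:renyi_bd} holds for the chosen $\rho$ (here $\delta\in(0,1)$ is the free confidence level in Theorem~\ref{thm:renyi_main}, relabelled to avoid clashing with the radius $\varepsilon$), the right-hand side of \eqref{eq:renyi_bd} is at most $\frac{\alpha}{n(1-\alpha)}\,D\,n\varepsilon^2 - \frac{1}{n(1-\alpha)}\log\Pi_n(B) + \frac{1}{n(1-\alpha)}\log(1/\delta) = \frac{D\alpha}{1-\alpha}\varepsilon^2 - \frac{1}{n(1-\alpha)}\log\Pi_n(B) + \frac{1}{n(1-\alpha)}\log(1/\delta)$. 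Choosing $\delta = 1/\{(D-1)^2 n\varepsilon^2\}$ and taking a union bound gives total failure probability at most $2/\{(D-1)^2 n\varepsilon^2\}$, which is the stated probability, and \eqref{eq:cor_renyi} follows once the lower-order confidence term $\frac{1}{n(1-\alpha)}\log\{(D-1)^2 n\varepsilon^2\}$ is suppressed (alternatively, re-run the short argument behind Theorem~\ref{thm:renyi_main}---Jensen applied to $\Pi_{n,\alpha}$, the evidence lower bound $\int e^{-\alpha r_n(\theta,\theta_0)}\Pi_n(d\theta)\ge\Pi_n(B)\exp\{-\alpha\int_B r_n\,d\rho\}$ coming from the same Chebyshev step, and a Markov step on the mean-one variable $\int e^{(1-\alpha)D^{(n)}_{\alpha}(\theta,\theta_0)-\alpha r_n(\theta,\theta_0)}\Pi_n(d\theta)$---and allocate the two confidence levels directly). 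Finally, for the ``in particular'' assertion, set $\varepsilon=\varepsilon_n$ and use the defining relation $-\log\Pi_n(B_n(\theta_0,\varepsilon_n;\theta_0)) = n\varepsilon_n^2$: the second term of \eqref{eq:cor_renyi} becomes $\varepsilon_n^2/(1-\alpha)$, giving the bound $(D\alpha+1)\varepsilon_n^2/(1-\alpha)$.

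The step I expect to be the main obstacle is the variance estimate in the second paragraph: one must ensure the fluctuations of the $\rho$-average of log-likelihood ratios are governed by the $L_2(\bbP^{(n)}_{\theta_0})$ quantity appearing in \eqref{eq:chEN} rather than by something larger, and the Jensen-then-Fubini interchange is precisely what secures this. The evaluation of $D(\rho,\Pi_n)$, the confidence bookkeeping, and the critical-radius substitution are all routine.
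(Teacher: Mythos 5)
Your proposal is correct and follows essentially the same route as the paper: the same choice $\rho=\Pi_n(\cdot\cap B_n(\theta_0,\varepsilon;\theta_0))/\Pi_n(B_n(\theta_0,\varepsilon;\theta_0))$ in Theorem~\ref{thm:renyi_main}, the identity $D(\rho,\Pi_n)=-\log\Pi_n(B_n(\theta_0,\varepsilon;\theta_0))$, the first/second-moment bounds $\le n\varepsilon^2$ via Fubini and Jensen (the paper uses Cauchy--Schwarz) followed by Chebyshev with deviation $(D-1)n\varepsilon^2$, and an intersection/union bound of the two events. The only divergence is bookkeeping of the confidence level in Theorem~\ref{thm:renyi_main}: the paper sets it to $e^{-\alpha n\varepsilon^2}$, so the term $\frac{1}{n(1-\alpha)}\log(1/\varepsilon)$ becomes $\frac{\alpha}{1-\alpha}\varepsilon^2$ and is absorbed into the leading term (with $e^{-\alpha n\varepsilon^2}\le 1/\{(D-1)^2n\varepsilon^2\}$ preserving the stated probability), whereas your polynomial choice $\delta=1/\{(D-1)^2 n\varepsilon^2\}$ leaves the remainder $\frac{1}{n(1-\alpha)}\log\{(D-1)^2 n\varepsilon^2\}$ that you ask to be ``suppressed''---adopting the exponential allocation (which your parenthetical already gestures at) closes that loose end and reproduces the paper's argument.
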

\noindent The main idea of the proof is to choose the probability measure $\rho$ as $\Pi_n(\cdot \cap B_n(\theta_0,\varepsilon;\theta_0))/\Pi_n(B_n(\theta_0,\varepsilon;\theta_0))$; the restriction of the prior $\Pi_n$ to $B_n(\theta_0,\varepsilon;\theta_0)$. Under this choice, we have $D(\rho\, \vert \vert\,  \Pi_n) = - \log \Pi_n(B_n(\theta_0,\varepsilon;\theta_0))$, and $\int r_{n}(\theta,\,\theta_0) \,\rho(d\theta)$ can be bounded by applying Chebyshev's inequality. If higher moment constraints on the likelihood ratio $r_{n}(\theta,\,\theta_0)$ is also included into the definition of $B_n(\theta_0,\varepsilon;\theta_0)$ in \eqref{eq:chEN}, then the probability bound for \eqref{eq:cor_renyi} to hold can be boosted (for details, see Section 2 in \cite{GhosalVandervaart:07}).

According to Corollary~\ref{cor:renyi}, the overall risk bound in~\eqref{eq:cor_renyi} is a balance between two terms: an approximation error term $\varepsilon_n^2$ and a local complexity measure term $-\frac{1}{n}\log \Pi_n(B_n(\theta_0,\varepsilon;\theta_0))$. For this reason, we will refer to the second term as the \emph{local Bayesian complexity}. The  local Bayesian complexity reflects the compatibility between the prior distribution and the parameter space: if $\Pi_n$ is close to a uniform distribution over $\Theta$, then $-\log \Pi_n(B_n(\theta_0,\varepsilon;\theta_0)) = \log\big\{1 / \Pi_n(B_n(\theta_0,\varepsilon;\theta_0)\}$ is roughly the logarithm of the number of $\varepsilon_n$-balls needed to cover a neighborhood of $\theta_0$, and therefore is related to the local covering entropy. On the other hand, if some prior knowledge about $\theta_0$ is available, then we can combine these knowledge to increase the prior mass around $\theta_0$, which may significantly boost the rate of convergence of the Bayes estimator. This observation is consistent with our previous intuition that averaging based (average case analysis) Bayesian approaches sometimes can be better than optimization based (worst case analysis) frequentist approaches. For example, when certain hyperparameter or tuning parameter, such as the regularity of a function class or sparsity level of a regression model, is unknown, then a Bayesian procedure naturally achieves adaptation to those unknown parameters by placing a prior on them that distributes proper weights to different levels of the hyperparameter (see our examples in Section \ref{sec:exp}). In contrast, a common way to select a tuning parameter in frequentist methods is via cross-validation or data-splitting. These approaches only uses some proportion of data to do estimation, after learning the tuning parameter via the rest, which may not be the most efficient way to use data.

\

Although Theorem~\ref{thm:renyi_main} is useful for obtaining an BOI, when transformed into form \eqref{eq:FOI} the resulting leading constant $c$ of the approximation error term in the BOI is typically strictly larger than $1$, resulting in a non-sharp oracle inequality. Here, we call an oracle inequality sharp if the leading constant $c$ in  \eqref{eq:FOI} is $1$; see, for example, \cite{dalalyan2012}. To solve this issue for the PAC-Bayes inequality in Theorem~\ref{thm:renyi_main}, we consider a second class of PAC-Bayes inequalities that directly characterizes the closeness between $\theta$ and the best approximation $\theta^\ast$ of $\theta_0$ from $\Theta$.

\begin{theorem}[PAC-Bayes inequality relative to $\theta^\ast$]\label{thm:renyi_mainb}
Fix $\alpha \in (0, 1)$. Then, for any $\varepsilon \in (0, 1)$, %it holds that
\begin{equation}\label{eq:renyi_bd_b}
\begin{aligned}
 \int \Big\{\frac{1}{n} D^{(n)}_{\theta_0,\alpha}(\theta, \theta^\ast) \Big\}\, \Pi_{n, \alpha}(d\theta\,|\,X^{(n)}) \le \frac{\alpha}{n (1 - \alpha)} \int r_{n}(\theta,\,\theta_\ast) \,\rho(d\theta) + &\frac{1}{n(1-\alpha)} D(\rho \,,\,\Pi_n)  + \frac{1}{n(1-\alpha)}  \log(1/\varepsilon),\\
  &\mbox{$\forall$ probability measure $\rho\ll \Pi_n$},
\end{aligned}
\end{equation}
with $\bbP^{(n)}_{\theta_0}$ probability at least $(1 - \varepsilon)$. 
\end{theorem}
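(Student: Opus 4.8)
The plan is to mimic the proof of Theorem~\ref{thm:renyi_main}, replacing the reference point $\theta_0$ by the Kullback--Leibler minimizer $\theta^\ast$ of \eqref{eq:KL_min} inside the log-likelihood ratio. The single fact that powers the whole argument is an exponential--moment identity built into the definition \eqref{eq:alpha_div} of the $\alpha$-affinity with respect to $\theta_0$: since $r_n(\theta,\theta^\ast)=\log\{p^{(n)}_{\theta^\ast}(X^{(n)})/p^{(n)}_{\theta}(X^{(n)})\}$ gives $(p^{(n)}_{\theta}/p^{(n)}_{\theta^\ast})^{\alpha}=e^{-\alpha\, r_n(\theta,\theta^\ast)}$, for every fixed $\theta\in\Theta$ we have
\[
\bbE_{\theta_0}\big[e^{-\alpha\, r_n(\theta,\theta^\ast)}\big]=A^{(n)}_{\theta_0,\alpha}(\theta,\theta^\ast)=e^{-(1-\alpha)\,D^{(n)}_{\theta_0,\alpha}(\theta,\theta^\ast)},
\]
where $\bbE_{\theta_0}[\,\cdot\,]$ denotes expectation under $\bbP^{(n)}_{\theta_0}$.

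First I would introduce the nonnegative random variable $U_n:\,=\int_{\Theta}\exp\{(1-\alpha)\,D^{(n)}_{\theta_0,\alpha}(\theta,\theta^\ast)-\alpha\, r_n(\theta,\theta^\ast)\}\,\Pi_n(d\theta)$ and compute its mean. Because $D^{(n)}_{\theta_0,\alpha}(\theta,\theta^\ast)$ does not depend on the data, Tonelli's theorem together with the identity above shows that the multiplicative correction $e^{(1-\alpha)D^{(n)}_{\theta_0,\alpha}(\theta,\theta^\ast)}$ exactly cancels $\bbE_{\theta_0}[e^{-\alpha r_n(\theta,\theta^\ast)}]$, so $\bbE_{\theta_0}U_n=\int_{\Theta}\Pi_n(d\theta)=1$. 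Markov's inequality then yields $U_n\le 1/\varepsilon$ on an event $\mathcal{E}_\varepsilon$ with $\bbP^{(n)}_{\theta_0}(\mathcal{E}_\varepsilon)\ge 1-\varepsilon$, and the remainder of the argument is deterministic on $\mathcal{E}_\varepsilon$.

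Next I would lower bound $U_n$ in terms of the fractional posterior and then apply a variational identity to the normalizing constant. Writing \eqref{eq:frac_post} with the (arbitrary) reference $\theta^\dagger=\theta^\ast$, we have $\Pi_{n,\alpha}(d\theta\,|\,X^{(n)})=e^{-\alpha r_n(\theta,\theta^\ast)}\Pi_n(d\theta)/Z_n$ with $Z_n:\,=\int_{\Theta}e^{-\alpha r_n(\theta,\theta^\ast)}\Pi_n(d\theta)$, so that
\[
U_n=Z_n\int_{\Theta}e^{(1-\alpha)D^{(n)}_{\theta_0,\alpha}(\theta,\theta^\ast)}\,\Pi_{n,\alpha}(d\theta\,|\,X^{(n)})\ \ge\ Z_n\,\exp\!\Big\{(1-\alpha)\!\int_{\Theta}D^{(n)}_{\theta_0,\alpha}(\theta,\theta^\ast)\,\Pi_{n,\alpha}(d\theta\,|\,X^{(n)})\Big\},
\]
the inequality being Jensen's inequality for the convex function $\exp$ against the probability measure $\Pi_{n,\alpha}(\cdot\,|\,X^{(n)})$. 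Taking logarithms in $U_n\le 1/\varepsilon$ and rearranging gives $(1-\alpha)\int_{\Theta}D^{(n)}_{\theta_0,\alpha}(\theta,\theta^\ast)\,\Pi_{n,\alpha}(d\theta\,|\,X^{(n)})\le\log(1/\varepsilon)-\log Z_n$. I then bound $-\log Z_n$ by the Donsker--Varadhan (Gibbs) variational formula: for any probability measure $\rho\ll\Pi_n$, $\log Z_n=\log\int e^{-\alpha r_n(\theta,\theta^\ast)}\Pi_n(d\theta)\ge-\alpha\int r_n(\theta,\theta^\ast)\rho(d\theta)-D(\rho\,,\,\Pi_n)$, hence $-\log Z_n\le\alpha\int r_n(\theta,\theta^\ast)\rho(d\theta)+D(\rho\,,\,\Pi_n)$. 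Substituting and dividing by $n(1-\alpha)>0$ yields \eqref{eq:renyi_bd_b}, valid simultaneously for every $\rho\ll\Pi_n$ on the single event $\mathcal{E}_\varepsilon$.

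I do not anticipate a genuine obstacle: once the exponential--moment identity is in hand the argument is essentially line-for-line that of Theorem~\ref{thm:renyi_main} with $\theta_0$ replaced by $\theta^\ast$ in the log-likelihood ratio, and the extra care needed is confined to two routine points --- the Tonelli step (the integrand of $U_n$ is nonnegative), and the requirement that $D^{(n)}_{\theta_0,\alpha}(\theta,\theta^\ast)$ be finite $\Pi_n$-almost everywhere so that the Jensen step is legitimate and $U_n<\infty$ a.s. The latter is guaranteed under the hypotheses of Lemma~\ref{Lem:divergence} (convexity of $\{p^{(n)}_\theta:\theta\in\Theta\}$ or interiority of $\theta^\ast$), which also render the left-hand side a bona fide nonnegative divergence. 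It is worth emphasizing that nothing in the argument uses $\theta_0\in\Theta$: the high-probability statement is taken under the true law $\bbP^{(n)}_{\theta_0}$ while every divergence on the left-hand side of \eqref{eq:renyi_bd_b} is measured from $\theta^\ast$, precisely as the statement requires, which is exactly the feature that makes the induced Bayesian oracle inequality sharp.
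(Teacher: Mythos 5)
Your proposal is correct and follows essentially the same route as the paper: the paper proves Theorem~\ref{thm:renyi_mainb} by repeating the proof of Theorem~\ref{thm:renyi_main} with the exponential-moment identity \eqref{eq:key_div} for $A^{(n)}_{\theta_0,\alpha}(\theta,\theta^\ast)$ in place of the one for $\theta_0$, i.e.\ Fubini, Markov's inequality, and the variational Lemma~\ref{lem:var} with the fractional posterior as the optimizing measure. Your rearrangement (Markov on $U_n$ first, then Jensen for the posterior average and the Donsker--Varadhan lower bound on $\log Z_n$) is just an equivalent ordering of the same steps, so there is nothing substantive to add.
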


Similar to Corollary~\ref{cor:renyi} for a concrete Bayesian risk bound for characterizing the closeness between $\theta$ and $\theta_0$, we have the following counterpart for $\theta$ and $\theta^\ast$.

\begin{corollary}\label{cor:renyi_b}
For any $\varepsilon \in (0, 1)$ satisfying $n \, \varepsilon^2 > 2$ and $D > 1$, with $\bbP^{(n)}_{\theta_0}$ probability at least $1 - 2/\{(D-1)^2 n \varepsilon^2\}$, 
\begin{align}\label{eq:cor_renyi_b}
 \int \Big\{\frac{1}{n} D^{(n)}_{\theta_0,\alpha}(\theta, \theta^\ast) \Big\}\, \Pi_{n, \alpha}(d\theta\,|\,X^{(n)}) \le \frac{D\, \alpha}{1-\alpha} \, \varepsilon^2  + \Big\{ - \frac{1}{n(1-\alpha)} \log \Pi_n(B_n(\theta^\ast,\varepsilon;\theta_0)) \Big\}.
\end{align}
In particular, if we let $\varepsilon_n$ to be the Bayesian critical radius that is a stationary point of
\begin{align*}
 - \frac{\log \Pi_n(B_n(\theta^\ast,\varepsilon;\theta_0)) }{n\, \varepsilon} = \varepsilon,
\end{align*}
then with $\bbP^{(n)}_{\theta_0}$ probability at least $1 - 2/\{(D-1)^2 n \varepsilon_n^2\}$, 
\begin{align*}
 \int \Big\{\frac{1}{n} D^{(n)}_{\theta_0,\alpha}(\theta, \theta^\ast) \Big\}\, \Pi_{n, \alpha}(d\theta\,|\,X^{(n)}) \le \frac{D\,  \alpha + 1}{1-\alpha} \, \varepsilon_n^2.
\end{align*}
\end{corollary}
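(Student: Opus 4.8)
The plan is to obtain Corollary~\ref{cor:renyi_b} as the $\theta^\ast$-analogue of Corollary~\ref{cor:renyi}, by specializing the general PAC--Bayes bound of Theorem~\ref{thm:renyi_mainb} to one well-chosen competitor distribution $\rho$ and then controlling the two data-dependent quantities that remain; essentially all of the conceptual work already sits in Theorem~\ref{thm:renyi_mainb}, and the corollary only amounts to making the free measure $\rho$ concrete. Concretely, I would apply Theorem~\ref{thm:renyi_mainb} and take $\rho$ to be the prior renormalized to the Kullback--Leibler neighbourhood of \eqref{eq:chEN}, namely $\rho(\cdot)=\Pi_n\big(\cdot\cap B_n(\theta^\ast,\varepsilon;\theta_0)\big)\big/\Pi_n\big(B_n(\theta^\ast,\varepsilon;\theta_0)\big)$. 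This is well defined and absolutely continuous with respect to $\Pi_n$ as soon as the neighbourhood has positive prior mass, and for this choice the divergence term is exact, $D(\rho\,\|\,\Pi_n)=-\log\Pi_n\big(B_n(\theta^\ast,\varepsilon;\theta_0)\big)$, which is precisely the local Bayesian complexity appearing in \eqref{eq:cor_renyi_b}.

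It then remains to bound $Z:=\tfrac1n\int r_n(\theta,\theta^\ast)\,\rho(d\theta)$. Since $\rho$ is supported inside $B_n(\theta^\ast,\varepsilon;\theta_0)$, the two defining inequalities of that set read exactly $\mathbb{E}_{\theta_0}\,r_n(\theta,\theta^\ast)\le n\varepsilon^2$ and $\mathbb{E}_{\theta_0}\,r_n(\theta,\theta^\ast)^2\le n\varepsilon^2$ for every $\theta$ in its support, hence $\Var_{\theta_0}\,r_n(\theta,\theta^\ast)\le n\varepsilon^2$ there as well. Fubini gives $\mathbb{E}_{\theta_0}Z\le\varepsilon^2$, and since $(\int h\,d\rho)^2\le\int h^2\,d\rho$ for the probability measure $\rho$ (centering $h$ per $\theta$), taking expectations yields $\Var_{\theta_0}Z\le\int n^{-2}\Var_{\theta_0}\,r_n(\theta,\theta^\ast)\,\rho(d\theta)\le\varepsilon^2/n$. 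Chebyshev's inequality then gives $Z\le D\varepsilon^2$ outside an event of $\mathbb{P}^{(n)}_{\theta_0}$-probability at most $1/\{(D-1)^2 n\varepsilon^2\}$. Inserting $Z\le D\varepsilon^2$ and the exact divergence identity into \eqref{eq:renyi_bd_b}, and taking a union bound over this event and the confidence event of Theorem~\ref{thm:renyi_mainb}, produces \eqref{eq:cor_renyi_b} with probability at least $1-2/\{(D-1)^2 n\varepsilon^2\}$. For the final assertion I would simply evaluate \eqref{eq:cor_renyi_b} at $\varepsilon=\varepsilon_n$, the Bayesian critical radius determined by $-\log\Pi_n\big(B_n(\theta^\ast,\varepsilon_n;\theta_0)\big)=n\varepsilon_n^2$; this turns the complexity term into $\varepsilon_n^2/(1-\alpha)$, so the bound collapses to $\tfrac{D\alpha+1}{1-\alpha}\,\varepsilon_n^2$.

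The one genuinely delicate step is the variance estimate for $Z$: one has to push the variance \emph{inside} the $\rho$-integral (Jensen for the probability measure $\rho$ together with Tonelli), which is where the \emph{second} inequality in \eqref{eq:chEN} is used, so that the $\varepsilon^2/n$ bound survives integration; this is exactly what produces the factor $n\varepsilon^2$ in the probability denominator rather than a cruder Markov bound on $Z$ alone. The remaining bookkeeping---matching the confidence parameter of Theorem~\ref{thm:renyi_mainb} to the Chebyshev failure probability and absorbing the ensuing lower-order logarithmic residual, which is controlled because $n\varepsilon^2>2$---is routine and parallels the proof sketch already given for Corollary~\ref{cor:renyi}. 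If one strengthens \eqref{eq:chEN} with higher-moment constraints on $r_n(\theta,\theta^\ast)$, the same argument with a higher-order Chebyshev bound sharpens the probability to $1-C/(n\varepsilon^2)^{k/2}$.
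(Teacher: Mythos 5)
Your proposal is correct and follows essentially the same route as the paper, which proves Corollary~\ref{cor:renyi_b} by repeating the proof of Corollary~\ref{cor:renyi} with $\theta_0$ replaced by $\theta^\ast$: take $\rho$ to be the prior restricted to $B_n(\theta^\ast,\varepsilon;\theta_0)$ so that $D(\rho\,\|\,\Pi_n)=-\log\Pi_n(B_n(\theta^\ast,\varepsilon;\theta_0))$, bound $\int r_n(\theta,\theta^\ast)\,\rho(d\theta)$ via Fubini and Chebyshev using the two moment conditions in \eqref{eq:chEN}, and combine with the high-probability event of Theorem~\ref{thm:renyi_mainb} by a union bound. The only cosmetic difference is your choice of the confidence parameter (matched to the Chebyshev failure probability, with the logarithmic residual absorbed) versus the paper's $e^{-\alpha n\varepsilon^2}$, which does not change the argument in any essential way.
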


\noindent We now illustrate how Corollary~\ref{cor:renyi_b} leads to a sharp oracle inequality in the misspecified case (a concrete example is provided in Section~\ref{Section:NPregression}).   As noted previously, an oracle inequality is sharp in the misspecified case if the leading constant in front of the model space approximation term is $1$, i.e., 
$d(\hat{\theta}, \theta_0) \leq \inf_{\theta \in \Theta} d(\theta,  \theta_0)  + C \varepsilon_n^2$  for some 
distance metric  $d(\cdot, \cdot)$.   
%This approach to measure the performance of estimators under model misspecification was pioneered by \cite{vapnik1974theory} under the name  of {\em criterion on the minimax  loss}.  
In statistical learning theory, the {\em regret} \cite{vapnik1974theory,rakhlin2013empirical} of an estimator is defined as $d(\hat{\theta}, \theta_0)\}  -  \inf_{\theta \in \Theta} d(\theta,  \theta_0)$.  A benchmark to compare regrets for different estimators is the {\em minimax regret} defined as $\min_{\hat{\theta}} \max _{\theta_0} \big[ \bbE_{\theta_0} \{d(\hat{\theta}, \theta_0)\}  -  \inf_{\theta \in \Theta} d(\theta,  \theta_0)\big]$.  Regret bounds (misspecified case) are substantially harder to obtain compared to minimax risk bounds (well-specified case), and the rate of minimax regret can be different from the minimax risk \cite{rakhlin2013empirical}. Our general technique to derive a sharp oracle inequality for the Bayes estimator will imply that the Bayes estimator has minimax regret.  

Suppose we are interested in certain metric $d_n$, the  square of which is weaker than the average $\alpha$-divergence $\frac{1}{n} D^{(n)}_{\theta_0,\alpha}(\theta, \theta^\ast)$, that is
\begin{align*}
\frac{1}{n} D^{(n)}_{\theta_0,\alpha}(\theta, \theta^\ast) \geq c_\alpha\, d^2_n(\theta,\theta^\ast), \quad \theta\in \Theta,
\end{align*}
where $c_\alpha$ is some positive constant that may depend on $\alpha$. For simplicity, we assume that $\theta^\ast$ is also the minimizer of $d_n(\theta,\theta_0)$ over $\theta\in\Theta$. If this is not the case, then we can always add an extra remainder term to the upper bound that characterizes the difference between $\theta^\ast$ and the best approximation of $\theta_0$ from $\Theta$ relative to $d_n$. Under these assumptions, Corollary~\ref{cor:renyi_b} implies that with high probability, the Bayes estimator $\widehat{\theta}_B$ satisfies
\begin{align*}
d_n(\widehat{\theta}_B, \,\theta^\ast) \leq c_{\alpha}' \, \varepsilon_n,
\end{align*}
where $\varepsilon_n$ is the Bayesian critical radius. Now adding $d_n(\theta^\ast,\,\theta_0)$ to both sides of this inequality and applying the triangle inequality, we obtain
\begin{align*}
d_n(\widehat{\theta}_B, \,\theta_0) \leq \inf_{\theta\in\Theta} d_n(\theta, \,\theta_0) + c_{\alpha}' \, \varepsilon_n,
\end{align*}
which is a sharp oracle inequality. Sometimes, we may be interested in obtaining an oracle inequality for the squared loss $d_n^2$, when $\Theta$ is a vector space and $d_n$ is induced by an inner product, denoted by $\langle \cdot,\cdot\rangle_n$. This is a more intricate problem, as the trivial bound $d_n^2(\widehat{\theta}_B, \theta_0) \le 2[ d_n^2(\widehat{\theta}_B, \theta^\ast) + d_n^2(\theta^\ast, \theta_0)]$ renders the oracle inequality non-sharp. However, 
it is usually true when $\Theta$ is a convex set that
\begin{align*}
\frac{1}{n} D^{(n)}_{\theta_0,\alpha}(\theta, \theta^\ast) \geq c_\alpha\, \big( d^2_n(\theta,\theta^\ast) +2\, \langle  \theta-\theta^\ast, \theta^\ast - \theta_0 \rangle_n\big), \quad\forall ~ \theta\in \Theta.
\end{align*}
For example, this inequality holds for regression with fixed design, where $d_n$ is the $L_2$ empirical norm (details can be found in Section~\ref{Section:NPregression}).
Again, by applying Corollary~\ref{cor:renyi_b} and adding  $d_n^2(\theta^\ast,\,\theta_0)$ to both sides of this inequality, we obtain
\begin{align*}
d^2_n(\widehat{\theta}_B, \,\theta_0) = d^2_n(\widehat{\theta}_B, \,\theta^\ast) + 2\, \langle  \widehat{\theta}_B -\theta^\ast, \theta^\ast - \theta_0 \rangle_n +  d^2_n(\theta^\ast, \,\theta_0)  \leq \inf_{\theta\in\Theta} d^2_n(\theta, \,\theta_0) + c_{\alpha}' \, \varepsilon_n^2,
\end{align*}
which is a sharp oracle inequality for  the squared loss $d_n^2$. 
As an application of this technique,  we derive 
a sharp oracle inequality for estimating a convex function in Theorem \ref{thm:conv:rate_mis}
 when $f_0$ is not necessarily convex.  
 
\paragraph{Comparisons with previous work:}
The most relevant PAC-Bayes type result to ours, such as Theorem~\ref{thm:renyi_main}, is the Theorem 1 in \cite{Dalalyan2008}, which focus on the regression setting $Y_i=f(x_i)+w_i$, where $\theta = f$ is the unknown regression function to be estimated, $x_i$'s are the fixed design points and $w_i$'s are the i.i.d.~zero mean noise, corresponding to the i.n.i.d.~observations. They propose to use the posterior mean of the following quasi-likelihood function as the estimator,
\begin{align*}
L_{n,\beta}(f) = \exp\Big\{-\frac{1}{2\beta} \sum_{i=1}^n (Y_i-f(x_i))^2\Big\},
\end{align*}
where according to their terms, $\beta$ is a temperature parameter. In the special case when $w_i\sim N(0,\sigma^2)$ and $\beta=\sigma^2$, this function reduces to the likelihood function. They establish a PAC-Bayes inequality 
\begin{align*}
\bbE^{(n)}_{\theta_0}[\|\hat{f} - f_0\|_n^2] \leq \int \|f - f_0\|_n^2 \rho(df) + \frac{\beta}{n} D(\rho\,,\,\Pi_n),\ \ \ \mbox{$\forall$ probability measure $\rho\ll \Pi_n$},
\end{align*}
when $\beta\geq 4\sigma^2$, where $\hat{f}$ is the corresponding posterior mean. Therefore, their quasi-likelihood approach can be viewed as a special case under our fractional posterior with $\alpha \leq 1/4$.
Their proof is specialized to the empirical $L^2(\bbP_n)$ loss and requires the log-likelihood function to also take a sum of squares form. In contrast, our PAC-Bayes inequality generalizes the results in \cite{Dalalyan2008} to a more broader class of models. Moreover, the posterior expectation in $\int R(f,f_0) \Pi_{n,\alpha}(df\,|\,X^{(n)})$ in our PAC-Bayes inequality is taken outside the loss function $R$ instead of plugging in the estimator as $R(\widehat{f}_B, f_0)$, which is always bounded above by $\int R(f,f_0) \Pi_{n,\alpha}(df\,|\,X^{(n)})$ when $R(f, f_0)$ is a convex function of $f$.

\section{Examples}\label{sec:exp}

In this section, we demonstrate the salient features of our theory through a number of illustrative examples. All results in this section are stated in the form of PAC-Bayes bounds derived from Corollary \ref{cor:renyi}. We note that one could alternatively use Theorem \ref{Thm:contraction} to obtain similar conclusions. Our first example illustrates the efficacy of the fractional posterior approach in shape-constrained estimation. In the well-specified case, we demonstrate the optimal concentration in estimating a convex function, where we bypass the need to compute the covering entropy of the convex function space. To best of our knowledge, such a result is not available in the Bayesian literature. In the model misspecified case, we derive a sharp Bayesian oracle inequality that extends the recent sharp oracle inequality for one-dimensional convex regression obtained in \cite{Bellec:2015} to general dimension $d\geq 1$.
The next two examples concern the classical nonparametric regression and nonparametric density estimation, where we show that the fractional posterior optimally and adaptively concentrates at the true parameter value with substantially relaxed assumptions on the prior compared to existing theory. These two examples also theoretically justify the adaptation nature of integration based Bayesian approaches.

%In the following, we consider a few applications corresponding to the specified case to demonstrate the usefulness of our theory.  The examples show that one can obtain optimal posterior concentration using the power likelihood with substantially relaxed assumptions on the prior distribution.  This is primarily because the set of sufficient conditions pertain to only the prior concentration (refer for example to  Theorem \ref{Thm:contraction}  for the concentration of the posterior measure and Corollary \ref{cor:renyi} for the PAC-Bayes risk bounds in the specified case).  For simplicity of exposition,  we only provide the PAC-Bayes risk bounds in the examples.  

\subsection{Nonparametric regression}\label{Section:NPregression}

We start with a general regression problem.
Consider the following nonparametric regression model with fixed design
\begin{align}\label{eq:sgp_def}
y_i = \mu(x_i) + \varepsilon_i, \quad \varepsilon_i \sim \mx N(0,\sigma^2), \quad i = 1, \ldots, n,
\end{align}
where $y_i \in \mathbb{R}$ is the response, $x_i \in [0, 1]^d$ is the $i$th fixed design point, $\mu : [0, 1]^d \to \mb R$ is the unknown regression function to be estimated and $\sigma$ is the noise level. For simplicity, we assume that $\sigma$ is known.  Given our general notation, $\mu$ plays the role of $\theta$ here. To estimate $\mu$, we place a prior $\Pi$ over some function space $\m F$. An examination of the fractional likelihood $L_{n, \alpha}(\mu)$ and the corresponding posterior $\Pi_{n, \alpha}(\mu)$ under \eqref{eq:sgp_def} yields    
\begin{align*}
\Pi_{n, \alpha}(\mu) =  \frac{ \prod_{i=1}^n \phi_{\sigma}^{\alpha}\{y_i - \mu(x_i)\} \Pi(\mu)}
{\int \prod_{i=1}^n \phi_{\sigma}^{\alpha}\{y_i - \mu(x_i)\} \Pi(\mu)}  =  
 \frac{ \prod_{i=1}^n \phi_{\psi}\{y_i - \mu(x_i)\} \Pi(\mu)}
{\int \prod_{i=1}^n \phi_{\psi}\{y_i - \mu(x_i)\} \Pi(\mu)} 
\end{align*}
where $\psi = \sigma/\sqrt{\alpha}$ and $\phi_{\sigma}$ stands for the multivariate normal density with zero mean and covariance matrix $\sigma^2 I_n$.  Hence the fractional posterior for \eqref{eq:sgp_def} is essentially a standard posterior with a different variance parameter in the likelihood.  

We use the notation $\|\cdot\|_{2,n}$ to denote the $L_2(\mb P_n)$ norm relative to the empirical measure $\mb P_n=n^{-1}\,\sum_{i=1}^n \delta_{x_i}$, and use $\langle\cdot,\cdot\rangle_n$ to denote the empirical inner product, that is, $\langle f,g\rangle_n=n^{-1}\sum_{i=1}^n f(x_i)\, g(x_i)$ for two functions $f$ and $g$.
Let $\mu_0$ denote the true regression function, which may be in or not in $\m F$, depending on whether the model is well-specified or misspecified. The observations $\{y_i\}_{i=1}^n$ are i.n.i.d., and simple calculations yields
\begin{align*}
D(p^{(n)}_{\mu_0},p^{(n)}_{\mu}) = \frac{n}{2} \,\|\mu - \mu_0\|_{2,n}^2,
\end{align*}
therefore, the function $\mu^\ast \in \m F$ that minimizes the KL divergence is
\begin{align*}
\mu^\ast:\,= \arg\inf_{\mu \in\m F} \|\mu-\mu_0\|_{2,n}^2.
\end{align*}
Moreover, straightforward calculations show 
\begin{equation}\label{Eqn:formula_convex}
\begin{aligned}
&B_n(\mu^\ast, \varepsilon; \mu_0) \supset \Big\{ \mu\in\m F:\,  \|\mu-\mu^\ast\|_{2,n}^2 + 2 \langle \mu- \mu^\ast,\, \mu^\ast -\mu_0\rangle_n \leq \varepsilon^2\Big\},\\
& D_{\mu_0,\alpha}^{(n)}(\mu,\, \mu^\ast) = \frac{n \alpha}{2(1-\alpha)} \bigg[ \|\mu-\mu_0\|_{2,n}^2 -  \|\mu^\ast-\mu_0\|_{2,n}^2 - \alpha\,  \|\mu-\mu^\ast\|_{2,n}^2\bigg].
\end{aligned}
\end{equation}

\

Let $K := \{ \widetilde{\mu} = (\mu(x_1), \ldots, \mu(x_n))^{\T} : \mu \in \m F\} \subset \mb R^n$ denote the parameter space for the $n$-vector $\widetilde{\mu}$ created by evaluating the function $\mu$ at the design points. $D_{\mu_0,\alpha}^{(n)}(\mu,\, \mu^\ast)$ in \eqref{Eqn:formula_convex} can be equivalently expressed as $T \alpha/\{2 (1 - \alpha)\}$, with
$$
T = \bigg[ \| \widetilde{\mu} - \widetilde{\mu}_0\|^2 -  \| \widetilde{\mu}^*- \widetilde{\mu}_0\|^2 - \alpha\,  \| \widetilde{\mu}- \widetilde{\mu}^*\|^2\bigg],
$$
where $\widetilde{\mu}^* = \mbox{Proj}_K(\widetilde{\mu}_0)$, the Euclidean projection of $\tilde{\mu}_0$ to the set $K$. If $K$ is a closed convex set in $\mb R^n$, then it is a standard fact from convex geometry (see, for example, \cite{rockafellar1997convex}) that the projection is uniquely defined and satisfies 
$$
\dotp{ \widetilde{\mu}_0 - \widetilde{\mu}^*}{ \widetilde{\mu} - \widetilde{\mu}^*} \le 0, \quad \forall \, \mu \in K.
$$
With some simple algebra, $T =  (1 - \alpha) \| \widetilde{\mu}-\widetilde{\mu}^\ast\|^2 + 2\langle \widetilde{\mu} -\widetilde{\mu}^\ast, \widetilde{\mu}^\ast-\widetilde{\mu}_0\rangle \ge 0$ by the above inequality. Hence, $D_{\mu_0,\alpha}^{(n)}(\mu,\, \mu^\ast)$ defines a valid divergence measure if $K$ is convex. It is straightforward to verify that $K$ is convex if the class of functions $\m F$ is monotone or convex. %Thus, we can use our general divergence measure to derive sharp oracle inequalities in 

%and define $\widetilde{\mu^*} = \mbox{Proj}_K(\widetilde{\mu}_0)$. 

\paragraph{Shape-constrained function estimation:} The observations in the previous paragraph suggest the applicability of our framework to shape-constrained regression problems. We provide an illustration via convex regression, where $\m F$ is the function space of all $d$-dimensional convex functions over $[0,1]^d$.
Our fractional posterior framework becomes especially attractive in such problems, since it obviates the need to compute entropy numbers in restricted spaces, which can be a challenging exercise in itself \cite{guntuboyina2013covering}.
%In this section,  we show a power-likelihood  delivers optimal Bayes risk bounds in shape-constrained inference,  without having to compute entropy of compact subsets of restricted spaces.  We consider the same regression model \eqref{eq:sgp_def}, but $\mu:[0, 1]^d \to \mathbb{R}$ is now assumed to be convex.  

It is recent practice in the frequentist literature to avoid additional smoothness assumptions on convex functions while studying  rates of convergence \cite{guntuboyina2015global,balazs2015near}. 
To that end, let $\partial \mu(x)$ denote the {\em sub-gradient} of the function $\mu$ at the point $x$, that is, 
\begin{eqnarray*}
\partial \mu(x)  = \{ s \in \mathbb{R}^d:  \mu(z) \geq \mu(x) + s^{\T}(z-x),\, \text{for all}\, z \in [0, 1]^d\}.  
\end{eqnarray*}
As in \cite{balazs2015near}, define the class of convex, sub-differentiable, uniformly Lipschitz functions on $[0, 1]^d$ as 
\begin{eqnarray*}
\mbox{Co}_L[0, 1]^d  = \{\mu: [0, 1]^d  \to \mathbb{R}, \, \mu\, \text{is convex}, \, \norm{s} \leq L \, \text{for all}\, s \in \partial \mu(x), \,   \partial \mu(x) \, \text{is non empty for all}\,  x\}.   
\end{eqnarray*}
We model $\mu$ as a maximum of hyperplanes \cite{hannah2011bayesian}, with a prior distribution for the number of affine functions over which the maximum is taken. Specifically, we let
\begin{eqnarray}\label{eq:priorconv}
\mu(x) \mid k, \{a_j^k, b_j^k\}  = \max_{j \in \{1, \ldots, k\}} \big[ (a_j^k)^{\T} x  +  b_j^k\big], \quad \{(a_j^k)^{\T},  b_j^k\}^{\T}  \mid k\sim \mbox{N}(0, \tau^2 \mathrm{I}_{d+1}), \quad k \sim \pi_k.  
\end{eqnarray}

%To develop a Bayesian model for estimating convex functions,  we adopt a common technique of 
%maximizing  affine functions with a prior for the number of components over which the maximum is taken.  We propose the following prior distribution for $\mu$
The following theorem shows that in the well-specified case where $\mu_0\in\m F$, with no additional smoothness condition on $\mu_0$,  we obtain a Bayes risk bound  of the order $n^{-2/(4 + d)}$ up to logarithmic terms, which coincides with the minimax risk under any $d\geq1$ \cite{guntuboyina2015global,balazs2015near}.   
\begin{theorem}[Bayesian risk, well-specified case]\label{thm:conv:rate}
Consider the model \eqref{eq:sgp_def} with $\mu_0 \in \mbox{Co}_L[0, 1]^d$, and the prior for $\mu$  satisfies \eqref{eq:priorconv} with $\pi_k \geq 
\exp \{-Ck \log k\}$ for some constant $C > 0$,  then with $\bbP_{\mu_0}^{(n)}$ probability tending to one, 
\begin{align}\label{npreg:rate}
\int \norm{\mu - \mu_0}_{2, n}^2 d\Pi_{n, \alpha}(\mu)  \leq \frac{C}{\alpha\,(1-\alpha)} \varepsilon_n^2, 
\end{align}
where $\varepsilon_n = n^{-2/(4 +d)} \log^{t} n$ with $t = 2/(4+d)$, and $C$ is some constant independent of $\alpha$.
\end{theorem}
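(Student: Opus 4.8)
The plan is to read off the result from Corollary~\ref{cor:renyi} after establishing a single prior-mass estimate. Since $\mu_0\in\m F$ here, the model is well-specified, so $\mu^\ast=\mu_0$ and the formulas \eqref{Eqn:formula_convex} collapse to $\frac1n D^{(n)}_{\mu_0,\alpha}(\mu,\mu_0)=\frac{\alpha}{2}\,\norm{\mu-\mu_0}_{2,n}^2$ and $B_n(\mu_0,\varepsilon;\mu_0)\supset\{\mu\in\m F:\norm{\mu-\mu_0}_{2,n}^2\le\varepsilon^2\}\supset\{\mu\in\m F:\norm{\mu-\mu_0}_\infty\le\varepsilon\}$, the last inclusion because $\norm{\cdot}_{2,n}\le\norm{\cdot}_\infty$. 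Thus it suffices to produce $\varepsilon_n\asymp n^{-2/(4+d)}\log^t n$ for which the prior-mass condition $\Pi_n\big(\norm{\mu-\mu_0}_\infty\le\varepsilon_n\big)\ge e^{-n\varepsilon_n^2}$ holds; feeding $\varepsilon=\varepsilon_n$ and $D=2$ into Corollary~\ref{cor:renyi} then gives, with $\bbP_{\mu_0}^{(n)}$-probability at least $1-2/(n\varepsilon_n^2)$,
\begin{align*}
\frac{\alpha}{2}\int\norm{\mu-\mu_0}_{2,n}^2\,d\Pi_{n,\alpha}(\mu)\ \le\ \frac{2\alpha}{1-\alpha}\,\varepsilon_n^2\ +\ \frac{1}{n(1-\alpha)}\big(-\log\Pi_n(B_n(\mu_0,\varepsilon_n;\mu_0))\big)\ \le\ \frac{2\alpha+1}{1-\alpha}\,\varepsilon_n^2,
\end{align*}
so multiplying by $2/\alpha$ yields \eqref{npreg:rate}.

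To bound the prior mass I would first invoke the efficient polytopal (Bronshtein-type) approximation of convex bodies used in \cite{guntuboyina2015global,balazs2015near}: for every $k$ there exist affine maps whose maximum $\mu_0^{(k)}(x)=\max_{j\le k}\{(a_j)^\T x+b_j\}$ satisfies $\norm{\mu_0^{(k)}-\mu_0}_\infty\le C_{d,L}\,k^{-2/d}$, where the slopes may be taken to be subgradients of $\mu_0$, so $\norm{a_j}\le L$, and (using that $\mu_0$, being $L$-Lipschitz on $[0,1]^d$, is bounded) $|b_j|\le M$ for a constant $M=M(L,d)$. Choose $k=k_n$ so that $C_{d,L}k_n^{-2/d}\le\varepsilon_n/2$, i.e.\ $k_n\asymp\varepsilon_n^{-d/2}$. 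Under the hierarchical prior \eqref{eq:priorconv}, if all $k_n$ coefficient vectors of $\mu$ land within $\eta_n\asymp\varepsilon_n$ of $(a_j,b_j)$ then $\norm{\mu-\mu_0^{(k_n)}}_\infty\le(\sqrt d+1)\eta_n\le\varepsilon_n/2$, hence $\norm{\mu-\mu_0}_\infty\le\varepsilon_n$. A standard Gaussian small-ball estimate for $\mbox{N}(0,\tau^2\mathrm{I}_{d+1})$ gives each such event probability $\gtrsim (\eta_n/\tau)^{d+1}e^{-M^2/(2\tau^2)}$, so together with $\pi_k\ge e^{-Ck\log k}$,
\begin{align*}
\Pi_n\big(\norm{\mu-\mu_0}_\infty\le\varepsilon_n\big)\ \ge\ \pi_{k_n}\Big[c\,(\eta_n/\tau)^{d+1}e^{-M^2/(2\tau^2)}\Big]^{k_n},\qquad\text{hence}\qquad -\log\Pi_n(\cdot)\ \lesssim\ k_n\log k_n+k_n\log(1/\varepsilon_n)\ \lesssim\ k_n\log n .
\end{align*}

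It remains to balance. The local Bayesian complexity is $-\tfrac1n\log\Pi_n(\cdot)\lesssim k_n\log n/n\asymp\varepsilon_n^{-d/2}\log n/n$, and the requirement $-\tfrac1n\log\Pi_n(\cdot)\lesssim\varepsilon_n^2$ becomes $\varepsilon_n^{2+d/2}\gtrsim\log n/n$, which is satisfied by $\varepsilon_n=(\log n/n)^{2/(4+d)}=n^{-2/(4+d)}\log^{t}n$ with $t=2/(4+d)$ --- precisely the exponent in the statement. Since then $n\varepsilon_n^2=n^{d/(4+d)}(\log n)^{4/(4+d)}\to\infty$, we have $n\varepsilon_n^2\ge2$ for large $n$ and $1-2/(n\varepsilon_n^2)\to1$, completing the argument along the lines of the first paragraph.

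The only genuinely delicate ingredient is the sup-norm approximation rate $k^{-2/d}$ for an arbitrary (merely Lipschitz, non-smooth) convex function by a maximum of $k$ affine functions with uniformly bounded coefficients: a uniform grid of supporting hyperplanes yields only $k^{-1/d}$, which would give the suboptimal exponent $2/(2+d)$; the improvement to $k^{-2/d}$ --- which is what makes the rate minimax-optimal --- rests on the efficient polytopal approximation of convex bodies. A minor point to verify is that the intercepts of the approximating max-affine function remain bounded, which follows solely from $\mu_0$ being Lipschitz on a bounded domain. The remaining steps --- the Gaussian small-ball bound and the bias--complexity balance --- are routine.
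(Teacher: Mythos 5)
Your proposal is correct and follows essentially the same route as the paper: reduce everything to a sup-norm prior small-ball bound, approximate $\mu_0$ by a max-affine function with $k\asymp\varepsilon^{-d/2}$ pieces and bounded coefficients (Lemma 4.1 of \cite{balazs2015near}), apply a Gaussian small-ball/Anderson-type estimate together with $\pi_k\ge e^{-Ck\log k}$, and feed the resulting bound $-\log\Pi_n(B_n)\lesssim \varepsilon_n^{-d/2}\log n$ into Corollary~\ref{cor:renyi}. If anything, your treatment of general $\alpha$ via the exact identity $\tfrac1n D^{(n)}_{\mu_0,\alpha}(\mu,\mu_0)=\tfrac{\alpha}{2\sigma^2}\norm{\mu-\mu_0}_{2,n}^2$ is cleaner than the paper's shortcut of working at $\alpha=1/2$, and it recovers the stated $C/\{\alpha(1-\alpha)\}$ constant directly.
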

%An inspection of the proof of Theorem \ref{thm:conv:rate} shows that we only require the prior concentration $\Pi(\norm{\mu - \mu_0}_{\infty}  \leq \varepsilon_n) \geq \exp \{ - n \varepsilon_n^2\}$ to achieve the optimal risk bound.  The proof bypasses the entropy calculations for compact subsets of  $\mbox{Co}[0, 1]^d$ which is known to be a challenging exercise. In fact,  there are existing full-length articles \cite{guntuboyina2013covering} devoted to just  finding entropy numbers of convex functions.   

Now we consider the misspecified case, where $\mu_0$ maybe a non-convex function. In this case, $\mu^\ast$ is the projection of $\mu_0$ into $\mbox{Co}_L[0, 1]^d$ relative to the $\|\cdot\|_{2,n}$ norm. We obtain the following sharp Bayesian oracle inequality, which generalizes the result of one-dimensional convex regression obtained in \cite{Bellec:2015} to general dimension $d\geq 1$.
\begin{theorem}[Bayesian risk, misspecified case]\label{thm:conv:rate_mis}
Consider the model \eqref{eq:sgp_def} with $\mu_0 \in C[0,1]^d$, and the prior for $\mu$ satisfying the conditions in Theorem~\ref{thm:conv:rate}.  Then with $\bbP_{\mu_0}^{(n)}$ probability tending to one, 
\begin{align}\label{npreg:rate}
\int \norm{\mu - \mu_0}_{2, n}^2 d\Pi_{n, \alpha}(\mu)  \leq \inf_{\mu \in \mbox{Co}_L[0, 1]^d}  \norm{\mu - \mu_0}_{2, n}^2 + \frac{C}{\alpha\,(1-\alpha)} \varepsilon_n^2,
\end{align}
where $\varepsilon_n$ is given in Theorem~\ref{thm:conv:rate}, and $C$ is some constant independent of $\alpha$.
\end{theorem}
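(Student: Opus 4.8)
The plan is to feed Corollary~\ref{cor:renyi_b} into the sharp--oracle mechanism described in \S\ref{subsec:PAC-Bayes}, and then reduce everything to a local prior--mass estimate that is the misspecified analogue of the one behind Theorem~\ref{thm:conv:rate}. First, as already observed after \eqref{eq:sgp_def}, the $\alpha$-fractional posterior for model~\eqref{eq:sgp_def} is an ordinary posterior with noise variance $\psi^2=\sigma^2/\alpha$, so I may work directly with $D^{(n)}_{\mu_0,\alpha}$ and with the neighbourhoods $B_n(\,\cdot\,,\,\cdot\,;\mu_0)$, both of which only involve $\sigma$. Let $\mu^\ast$ be the $\|\cdot\|_{2,n}$-projection of $\mu_0$ onto $\mathrm{Co}_L[0,1]^d$, i.e.\ the function whose design-point vector is $\mathrm{Proj}_K(\widetilde\mu_0)$ with $K$ the closed convex set $\{\widetilde\mu:\mu\in\m F\}\subset\mathbb R^n$. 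Taking $D=2$, so that for $n$ large $n\varepsilon_n^2>2$, Corollary~\ref{cor:renyi_b} applied with $\theta^\ast=\mu^\ast$ and $\varepsilon=\varepsilon_n$ gives, on an event of $\bbP^{(n)}_{\mu_0}$-probability at least $1-2/\{n\varepsilon_n^2\}\to1$,
\begin{align*}
\int \frac1n D^{(n)}_{\mu_0,\alpha}(\mu,\mu^\ast)\,\Pi_{n,\alpha}(d\mu) \;\leq\; \frac{D\alpha}{1-\alpha}\,\varepsilon_n^2 \;-\; \frac{1}{n(1-\alpha)}\log\Pi_n\big(B_n(\mu^\ast,\varepsilon_n;\mu_0)\big).
\end{align*}

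\textbf{Converting to the risk.} Next I turn the left-hand side into the target. By the identity in \eqref{Eqn:formula_convex}, $\tfrac1n D^{(n)}_{\mu_0,\alpha}(\mu,\mu^\ast)=\tfrac{\alpha}{2(1-\alpha)}\big[\|\mu-\mu_0\|_{2,n}^2-\|\mu^\ast-\mu_0\|_{2,n}^2-\alpha\|\mu-\mu^\ast\|_{2,n}^2\big]$, and since $K$ is convex the projection inequality yields $\langle\mu-\mu^\ast,\mu^\ast-\mu_0\rangle_n\geq0$ for every $\mu\in\m F$, hence $\|\mu-\mu^\ast\|_{2,n}^2\leq\|\mu-\mu_0\|_{2,n}^2-\|\mu^\ast-\mu_0\|_{2,n}^2$. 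Substituting and simplifying produces the pointwise bound $\|\mu-\mu_0\|_{2,n}^2-\|\mu^\ast-\mu_0\|_{2,n}^2\leq\tfrac2\alpha\cdot\tfrac1n D^{(n)}_{\mu_0,\alpha}(\mu,\mu^\ast)$. Integrating this against $\Pi_{n,\alpha}$, inserting the display above, and adding $\|\mu^\ast-\mu_0\|_{2,n}^2=\inf_{\mu\in\mathrm{Co}_L[0,1]^d}\|\mu-\mu_0\|_{2,n}^2$ to both sides gives exactly the sharp form
\begin{align*}
\int\|\mu-\mu_0\|_{2,n}^2\,\Pi_{n,\alpha}(d\mu)\;\leq\;\inf_{\mu\in\mathrm{Co}_L[0,1]^d}\|\mu-\mu_0\|_{2,n}^2 \;+\; \frac{2}{\alpha}\Big(\frac{D\alpha}{1-\alpha}\varepsilon_n^2-\frac{1}{n(1-\alpha)}\log\Pi_n\big(B_n(\mu^\ast,\varepsilon_n;\mu_0)\big)\Big).
\end{align*}
Thus it only remains to prove the local Bayesian complexity bound $-\tfrac1n\log\Pi_n\big(B_n(\mu^\ast,\varepsilon_n;\mu_0)\big)\lesssim\varepsilon_n^2$ with $\varepsilon_n=n^{-2/(4+d)}\log^{2/(4+d)}n$; afterwards $C$ is collected as an $\alpha$-free constant (note the KL-ball and its prior mass do not involve $\alpha$).

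\textbf{Prior--mass step.} For Gaussian regression one computes that $B_n(\mu^\ast,\varepsilon;\mu_0)\supset\{\mu\in\m F:\|\mu-\mu^\ast\|_{2,n}^2+2\langle\mu-\mu^\ast,\mu^\ast-\mu_0\rangle_n\leq c_\sigma\varepsilon^2\}$ (in this i.n.i.d.\ model the second-moment requirement reduces to $\|\mu-\mu^\ast\|_{2,n}^2\lesssim\varepsilon^2$, which is implied). Since $\mu^\ast\in\mathrm{Co}_L[0,1]^d$ is convex and $L$-Lipschitz, I invoke the approximation estimate for Lipschitz convex functions (as in \cite{balazs2015near,guntuboyina2015global}): for every $k$ there is a maximum $\mu_k$ of at most $k$ affine functions, with slopes and intercepts bounded by a constant depending only on $(L,\|\mu^\ast\|_\infty)$, with $\|\mu_k-\mu^\ast\|_\infty\lesssim k^{-2/d}$. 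Choosing $k\asymp\varepsilon_n^{-d/2}$ makes $\|\mu_k-\mu^\ast\|_{2,n}^2\lesssim k^{-4/d}\asymp\varepsilon_n^2$; one must moreover verify that the cross term $\langle\mu_k-\mu^\ast,\mu^\ast-\mu_0\rangle_n$ is $\lesssim\varepsilon_n^2$, so that $\mu_k$ and the functions obtained by $\lesssim\varepsilon_n^2$-perturbations of its $k(d+1)$ coefficients lie in $B_n(\mu^\ast,\varepsilon_n;\mu_0)$. The prior \eqref{eq:priorconv} then assigns this set mass at least $\pi_k\cdot(c\varepsilon_n^{2(d+1)})^k e^{-c'k}\geq\exp\{-Ck\log(1/\varepsilon_n)\}\gtrsim\exp\{-C'\varepsilon_n^{-d/2}\log n\}$, using $\pi_k\geq e^{-Ck\log k}$ and $\log k\asymp\log(1/\varepsilon_n)\asymp\log n$. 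Hence $-\tfrac1n\log\Pi_n\big(B_n(\mu^\ast,\varepsilon_n;\mu_0)\big)\lesssim\varepsilon_n^{-d/2}(\log n)/n$, and balancing this against $\varepsilon_n^2$ (the Bayesian critical-radius equation) gives $\varepsilon_n^{(4+d)/2}\asymp(\log n)/n$, i.e.\ $\varepsilon_n\asymp n^{-2/(4+d)}\log^{2/(4+d)}n$, as claimed. Combined with the previous step and the high-probability clause, this proves the theorem.

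\textbf{Main obstacle.} The genuinely hard point is the control of the cross term $\langle\mu_k-\mu^\ast,\mu^\ast-\mu_0\rangle_n$ in the membership condition for $B_n(\mu^\ast,\varepsilon_n;\mu_0)$. A naive Cauchy--Schwarz bound only yields $O(\|\mu_k-\mu^\ast\|_{2,n})=O(\varepsilon_n)$, which would force $k\asymp\varepsilon_n^{-d}$ and degrade the rate to $n^{-1/(d+2)}$; recovering the sharp (well-specified) rate $n^{-2/(4+d)}$ requires this cross term to be $O(\varepsilon_n^2)$, i.e.\ of the same order as $\|\mu_k-\mu^\ast\|_{2,n}^2$. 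This is where the misspecified analysis goes beyond Theorem~\ref{thm:conv:rate}: one must exploit that $\mu^\ast-\mu_0$ lies in the normal cone of $\mathrm{Co}_L[0,1]^d$ at $\mu^\ast$ (so $\langle\mu-\mu^\ast,\mu^\ast-\mu_0\rangle_n\geq0$ for every $\mu\in\mathrm{Co}_L[0,1]^d$, and it vanishes along feasible two-sided directions), and choose the $k$-piece approximant $\mu_k$ inside $\mathrm{Co}_L[0,1]^d$ so that $\mu_k-\mu^\ast$ is, up to the approximation error, aligned with such directions---equivalently, transporting the refined approximation estimates underlying the sharp one-dimensional oracle inequality of \cite{Bellec:2015} to dimension $d$. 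The remainder of the argument is bookkeeping with the constants (checking that $C$ is independent of $\alpha$) and the routine verification of the Gaussian density lower bounds and of $n\varepsilon_n^2\geq2$.
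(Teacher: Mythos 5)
Your reduction of the theorem to a local prior--mass bound is correct and is essentially the paper's own route: you apply Corollary~\ref{cor:renyi_b} with $\theta^\ast=\mu^\ast$ (the $\norm{\cdot}_{2,n}$-projection onto $\mbox{Co}_L[0,1]^d$), and use the convexity of $\mbox{Co}_L[0,1]^d$ together with the projection inequality $\langle\mu-\mu^\ast,\mu^\ast-\mu_0\rangle_n\geq 0$ to pass to the sharp form. Your pointwise bound $\norm{\mu-\mu_0}_{2,n}^2-\norm{\mu^\ast-\mu_0}_{2,n}^2\le \tfrac{2}{\alpha n}D^{(n)}_{\mu_0,\alpha}(\mu,\mu^\ast)$ is a mild streamlining of the paper's two-step argument (the paper first extracts a bound on $\int\norm{\mu-\mu^\ast}_{2,n}^2\,d\Pi_{n,\alpha}$ and plugs it back into \eqref{Eqn:formula_convex}); both give the same inequality with an $\alpha$-free constant, so this half of the proposal is fine.

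The genuine gap is the one you flag yourself: the proposal never proves the prior-mass estimate $-\tfrac1n\log\Pi_n\big(B_n(\mu^\ast,\varepsilon_n;\mu_0)\big)\lesssim\varepsilon_n^2$ at the claimed radius $\varepsilon_n=n^{-2/(4+d)}(\log n)^{2/(4+d)}$. The cross term $\langle\mu-\mu^\ast,\mu^\ast-\mu_0\rangle_n$ is only asserted, not shown, to be $O(\varepsilon_n^2)$, and your proposed fix (alignment of $\mu_k-\mu^\ast$ with directions in which the normal-cone inner product vanishes, ``transporting'' the one-dimensional argument of \cite{Bellec:2015}) is a sketch, not an argument; moreover it would have to hold not just for the single approximant $\mu_k$ but uniformly over the whole set of coefficient-perturbed max-affine functions whose prior mass you are counting, which your construction does not guarantee. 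The paper does not attempt any such alignment: it simply observes $B_n(\mu^\ast,\varepsilon;\mu_0)\supset\{\norm{\mu-\mu^\ast}_{2,n}\le C_1\varepsilon^2\}$ (Cauchy--Schwarz plus boundedness of $\norm{\mu^\ast-\mu_0}_{2,n}$) and reuses the prior-mass computation from Theorem~\ref{thm:conv:rate} on this smaller ball. You are right that this cruder containment, applied literally, only yields a complexity exponent of order $\varepsilon_n^{-d}\log(1/\varepsilon_n)$ (a sup-ball of radius $\varepsilon_n^2$ forces $k\asymp\varepsilon_n^{-d}$ affine pieces), i.e.\ a critical radius of order $(\log n/n)^{1/(d+2)}$; the paper's assertion that it still delivers $\varepsilon_n=n^{-2/(4+d)}(\log n)^{2/(4+d)}$ is stated without the bookkeeping that would justify it. So your diagnosis of where the difficulty sits is accurate and even sharper than the paper's treatment, but as a proof the proposal is incomplete: you must either carry out the refined cross-term control you describe (and check it survives the perturbations needed for the prior-mass lower bound), or accept the containment-with-radius-$\varepsilon^2$ route and the weaker remainder it actually supports.
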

\noindent This sharp oracle inequality implies some geometric structure of the fractional posterior that cannot be obtained via a non-sharp one. For example, as an immediate consequence, if we let $\mu^\ast$ be any minimizer of $ \norm{\mu - \mu_0}_{2, n}^2 $ over $ \mbox{Co}_L[0, 1]^d$, then \eqref{npreg:rate} implies
\begin{align*}
\int \langle \mu - \mu^\ast, \mu^\ast - \mu_0 \rangle_n  \, d\Pi_{n, \alpha}(\mu)  \leq  \frac{C}{2\, \alpha\,(1-\alpha)} \varepsilon_n^2.
\end{align*}
Since $ \langle \mu - \mu^\ast, \mu^\ast - \mu_0 \rangle_n$ is nonnegative for all $\mu \in \mbox{Co}_L[0, 1]^d$ due to the convexity of  $\mbox{Co}_L[0, 1]^d$ (see, for example, \cite{rockafellar1997convex}), this display suggests that the fractional posterior distribution of $\mu$ tends to concentrate on the narrow cone with vertex $\mu^\ast$ consisting of points such that the angle between vectors $\mu-\mu^\ast$ and $\mu^\ast -\mu_0$ is of order 
\begin{align*}
\frac{\pi}{2} - \frac{\langle \mu - \mu^\ast, \mu^\ast - \mu_0 \rangle_n}{\|\mu - \mu^\ast\|_n\cdot \|\mu^\ast - \mu_0\|_n} \approx \frac{\pi}{2} -\frac{\varepsilon_n}{\|\mu^\ast - \mu_0\|_n}\approx \frac{\pi}{2}.
\end{align*}
That is, with large fractional posterior probability, $\mu-\mu^\ast$ is almost perpendicular to $\mu^\ast -\mu_0$.

We conjecture that the current technique will allow us to find sharp oracle inequalities for Bayes estimators in monotone function estimation as well. Sharp oracle inequalities for isotonic regression for $d = 1$ has been recently established in \cite{Bellec:2015}, improving on a previous risk bound by \cite{chatterjee2015risk}. We leave this as a topic for future research. 
%In the context of monotone function estimation,  \cite{chatterjee2015risk}  obtained a non-sharp oracle inequality (with leading constant $> 1$). The result are improved by \cite{Bellec:2015}, where sharp oracle inequalities are obtained for the monotone least squares estimator.  We conjecture that the current technique will allow us to find sharp oracle inequalities for corresponding Bayes estimators. Such an investigation is deferred as a topic of separate research.  

\paragraph{Nonparametric GP regression:}
In this example, we consider the regression model \eqref{eq:sgp_def} with function space $\m F = C[0,1]^d$.
 We assign to $\mu$ a Gaussian process prior with mean function $h_\mu: [0, 1]^d \to \mathbb{R}$ and covariance kernel 
$c(x, x')$, a positive definite function from $[0, 1]^d \times [0, 1]^d \to \mathbb{R}$.   We denote the prior by $\mu \sim \mbox{GP}(h_\mu, c)$.  Without loss of generality we assume $h_\mu \equiv 0$. We work with the squared exponential covariance kernel  $c_a(x, x') = e^{-a^2 \norm{x - x'}^2}$, with the prior for $a$ 
satisfying 
\begin{align}\label{eq:ga}
g(a) \geq A_1 a^p e^{-B_1a^d \log^q a}. 
\end{align}
With this assumption, we show that the fractional posterior concentrates at the minimax rate (up to logarithmic terms) adaptively over $\mu_0 \in C^{\beta}[0, 1]^d$, where $\beta$ is the unknown smoothness level of $\mu_0$. To obtain the same result for the usual posterior, \cite{van2009adaptive} require the prior on $a$ to additionally satisfy an upper bound of the same order as the lower bound in \eqref{eq:ga}, once again, ruling out heavy tailed priors. 

\begin{theorem}\label{thm:npreg:rate}
Consider the model \eqref{eq:sgp_def}, with a conditional GP prior $\mu \mid a \sim \mbox{GP}(0, c_a)$ and suppose $a \sim g(\cdot)$ satisfies \eqref{eq:ga}. 
If the true function $\mu_0 \in C^{\beta}[0, 1]^d$, then \eqref{npreg:rate} is satisfied with 
$\varepsilon_n = n^{-\beta/(2\beta +d)} (\log n)^t$, where $t=  \{(1+d) \vee q\}/ (2 + d/\alpha)$. 
\end{theorem}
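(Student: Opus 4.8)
The plan is to apply Corollary~\ref{cor:renyi} to the Gaussian regression model~\eqref{eq:sgp_def} with function space $\m F = C[0,1]^d$ and the conditional GP prior $\mu \mid a \sim \mbox{GP}(0, c_a)$, $a \sim g$. Since $\mu_0 \in C^\beta[0,1]^d \subset C[0,1]^d$, the model is well-specified, so it suffices to produce a radius $\varepsilon_n$ satisfying the prior mass condition $\Pi_n\big(B_n(\mu_0, \varepsilon_n; \mu_0)\big) \geq e^{-n\varepsilon_n^2}$, and then read off the $\alpha$-divergence bound from Corollary~\ref{cor:renyi} together with~\eqref{Eqn:formula_convex}. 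Recall that in this regression setup $D(p^{(n)}_{\mu_0}, p^{(n)}_\mu) = (n/2)\|\mu-\mu_0\|_{2,n}^2$ and likewise the second-moment quantity in~\eqref{eq:chEN} reduces to a multiple of $\|\mu-\mu_0\|_{2,n}^2$ (plus lower-order noise terms), so $B_n(\mu_0,\varepsilon;\mu_0)$ contains a $\|\cdot\|_{2,n}$-ball, and in fact, since $\|\cdot\|_{2,n} \le \|\cdot\|_\infty$, it contains a sup-norm ball $\{\mu : \|\mu - \mu_0\|_\infty \le c\,\varepsilon\}$ for a suitable constant. Hence the whole problem reduces to lower bounding the GP small-ball/concentration probability $\Pi_n(\|\mu - \mu_0\|_\infty \le c\,\varepsilon_n)$.

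The key step is therefore the standard Gaussian process machinery of van der Vaart and van Zanten for the squared-exponential kernel with a rescaling hyperprior. I would invoke their decentered small-ball estimate: for a mean-zero GP with RKHS $\mathbb{H}^a$ and concentration function $\varphi_{\mu_0}^a(\varepsilon) = \inf_{h \in \mathbb{H}^a : \|h - \mu_0\|_\infty \le \varepsilon} \|h\|_{\mathbb{H}^a}^2 - \log \mbox{Pr}(\|\mu\|_\infty \le \varepsilon)$, one has $-\log \Pi_n(\|\mu - \mu_0\|_\infty \le 2\varepsilon \mid a) \le \varphi_{\mu_0}^a(\varepsilon)$. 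For the squared-exponential kernel $c_a$ the relevant bounds are: the centered small-ball exponent is of order $a^d (\log(a/\varepsilon))^{1+d}$, and the RKHS approximation term for $\mu_0 \in C^\beta[0,1]^d$ is bounded (after choosing $a \asymp (1/\varepsilon)^{1/\beta}$ up to logarithmic adjustments, or integrating over the band $a \in [\underline a_n, 2\underline a_n]$ where the hyperprior lower bound~\eqref{eq:ga} contributes $a^d \log^q a$) by something of the same polynomial-in-$a$, polylog order. Balancing $n\varepsilon_n^2$ against these exponents — i.e. solving $n\varepsilon_n^2 \asymp a_n^d (\log n)^{(1+d)\vee q}$ with $a_n \asymp (1/\varepsilon_n)^{1/\beta} (\log n)^{\text{something}}$ — yields $\varepsilon_n = n^{-\beta/(2\beta+d)}(\log n)^t$; tracking the $\alpha$ through the $\psi = \sigma/\sqrt\alpha$ reparametrization (which rescales the effective noise and hence feeds a factor into the KL neighborhood radius) produces the stated exponent $t = \{(1+d)\vee q\}/(2 + d/\alpha)$. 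The point worth emphasizing — and the reason the theorem is stated this way — is that only the lower bound~\eqref{eq:ga} on $g$ is used; no matching upper bound and no sieve/entropy control on the tail of $a$ is needed, because Corollary~\ref{cor:renyi} has no remaining-mass or testing hypothesis.

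The main obstacle is bookkeeping rather than conceptual: carefully tracking how the fractional power $\alpha$ (equivalently, the inflated noise variance $\psi^2 = \sigma^2/\alpha$) propagates through the definition of $B_n(\mu_0,\varepsilon;\mu_0)$ and into the final exponent $t$, and making sure the second-moment condition in~\eqref{eq:chEN} is handled (the extra $\log^2$-type term involving the Gaussian noise, which contributes only lower-order terms once $n\varepsilon_n^2 \to \infty$). A secondary technical point is that the van der Vaart--van Zanten small-ball bounds are stated for the $L_2(\text{Lebesgue})$ or sup norm on $[0,1]^d$, so one must note that $\|\cdot\|_{2,n} \le \|\cdot\|_\infty$ to transfer them to the empirical norm governing the regression KL divergence; this is immediate and loses nothing in the rate. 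Finally, one concludes by feeding this $\varepsilon_n$ into Corollary~\ref{cor:renyi} and using the identity for $D^{(n)}_{\mu_0,\alpha}(\mu,\mu^\ast)$ in~\eqref{Eqn:formula_convex} with $\mu^\ast = \mu_0$ to recover~\eqref{npreg:rate} in the form $\int \|\mu - \mu_0\|_{2,n}^2 \, d\Pi_{n,\alpha}(\mu) \le C\,\alpha^{-1}(1-\alpha)^{-1}\varepsilon_n^2$ with $\bbP^{(n)}_{\mu_0}$-probability tending to one.
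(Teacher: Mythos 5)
Your plan is essentially the paper's proof: reduce, via Corollary~\ref{cor:renyi} and the Gaussian-regression identities, to a lower bound on the sup-norm prior mass $\Pi(\norm{\mu-\mu_0}_\infty < C_1\varepsilon)$, invoke the van der Vaart--van Zanten concentration-function bounds for the rescaled squared-exponential process (the paper cites Section~5.1 of \cite{van2009adaptive} directly, which gives $\Pi(\norm{\mu-\mu_0}_\infty < C_1\varepsilon) \geq C_2 e^{-C_3 (1/\varepsilon)^{d/\beta}\{\log(1/\varepsilon)\}^{(1+d)\vee q}}$ using only the lower bound \eqref{eq:ga} on $g$), and balance against $n\varepsilon_n^2$. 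Your observation that no upper tail bound on $g$ and no sieve/entropy control is needed is exactly the point of the example.

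The one genuine error is your explanation of the exponent $t$. You claim the factor $d/\alpha$ in $t=\{(1+d)\vee q\}/(2+d/\alpha)$ arises from ``tracking the fractional power through the reparametrization $\psi=\sigma/\sqrt{\alpha}$.'' This cannot work: inflating the noise variance by the constant factor $1/\alpha$ only rescales the radius of the KL neighborhood $B_n(\mu_0,\varepsilon;\mu_0)$ by an $\alpha$-dependent constant, so it can change multiplicative constants in the prior-mass computation but never the power of the $\log n$ factor; the balancing $n\varepsilon_n^2 \asymp (1/\varepsilon_n)^{d/\beta}\{\log(1/\varepsilon_n)\}^{(1+d)\vee q}$ yields $t=\{(1+d)\vee q\}/(2+d/\beta)$, with no dependence on the fractional order. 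The symbol $\alpha$ in the stated $t$ is a notational clash: in the paper's proof (following the notation of \cite{van2009adaptive}) $\alpha$ denotes the smoothness of $\mu_0$, i.e.\ the quantity called $\beta$ in the theorem statement, not the fractional power. If you pursued your bookkeeping step as described, trying to extract an $\alpha$-dependence of $t$ from the noise rescaling, the derivation would fail; the correct conclusion is that the fractional order enters only the leading constant $C/\{\alpha(1-\alpha)\}$ in \eqref{npreg:rate}, while $t$ is determined solely by the smoothness and the hyperprior exponent $q$.
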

\noindent Theorem \ref{thm:npreg:rate} can be extended to other kernels in a straightforward manner.

\subsection{Nonparametric density estimation}

We make the same local H\"older smoothness assumptions on the true density as in \cite{ShenTokdarGhosal2013}. For $\mathcal{Y} \subset \mathbb{R}^d$, a function $L:\mathcal{Y}\rightarrow[0,\infty)$, and $\tau_0, \beta \ge 0$, 
the class of locally H\"older functions with smoothness $\beta$, denoted $\mathcal{C}^{\beta,L,\tau_0}$, consists of 
$f:\mathbb{R}^d\rightarrow \mathbb{R}$ such that for any $k=(k_1,\ldots,k_d)$ with $k_1+\cdots+k_d \leq \lfloor \beta \rfloor$, the mixed partial derivative $D^k f$ of order $k$ is finite, and for $k_1+\cdots+k_d = \lfloor \beta \rfloor$ and $\Delta y \in \mathcal{Y}$,
\[
|D^k f (y+\Delta y) - D^k f (y)| \leq L(y) \norm{\Delta y}^{\beta-\lfloor \beta \rfloor} e^{\tau_0 ||\Delta y||^2}.
\]

% We formulate two sets of assumptions, one on the data generating process and the other on the prior. \\
\noindent {\bf Assumptions on the true density:}
\label{sec:asns}
We assume $f_0 \in \mathcal{C}^{\beta,L,\tau_0}$ and  for all $k \leq \lfloor \beta \rfloor$ and some $\varepsilon>0$,
	\begin{equation}
	\label{eq:asnE0Dff0_Lf0}
	\int_{\mathcal{Y}} \left|\frac{D^k f_0(y)}{f_0(y)}\right|^{(2\beta+\varepsilon)/k} f_0(y) dy< \infty,
	\;
	\int_{\mathcal{Y}} \left|\frac{L(y)}{f_0(y)}\right|^{(2\beta+\varepsilon)/\beta} f_0(y) dy< \infty.
	\end{equation} 
Moreover, for all sufficiently large $y \in \mathcal{Y}$ and some positive $(c,b,\tau)$, 
\begin{equation}
	\label{eq:asnf0_exp_tails}
f_0(y) \leq c \exp(-b \norm{y}^\tau). 
	\end{equation}  
We model the density $f$ of i.i.d. observations $X_1, \ldots, X_n$ via a mixture of finite mixtures (MFM; \cite{miller2015mixture}), which is a finite mixture model with a prior on the number of mixture components. With some minor modifications, the results can be adapted to infinite mixture models, such as Dirichlet process mixtures \cite{KruijerRousseauVaart:09,ShenTokdarGhosal2013}. Our concentration results can accommodate {\em heavy tailed} prior distributions on the component specific means. 

%In the following, we use Corollary \ref{cor:renyi} to obtain optimal risk bounds for the Bayes estimator under a wider variety of prior distributions compared to existing literature \cite{ShenTokdarGhosal2013}.  
%In particular, our theory can accommodate heavier tailed prior (e.g. densities with polynomial tails) as opposed to the existing theory of posterior contraction; refer for example to  \cite{KruijerRousseauVaart:09,ShenTokdarGhosal2013}.   

\noindent {\bf Prior:} \label{sec:prior} 
We model the unknown density by a location mixture of normal densities 
\begin{equation}
\label{eq:cond_mix_def}
p(y \mid \psi, m) = \sum_{j=1}^m \alpha_j  \phi_{\mu_j,\sigma}(y),
\end{equation}
and a prior $\Pi$ is induced on the space of densities by assigning a 
a prior on $m \in \mathbb{N}$ and given $m$, a prior on $\psi =\{ (\mu_j, \alpha_j)_{j=1}^m, \sigma \}$,
where $\sigma \in (0,\infty)$, 
$\mu_j \in \mathbb{R}^{d}$ for $j = 1, \ldots, m$, and $(\alpha_1, \ldots, \alpha_m) \in \Delta^{m-1}$. 
In the sequel, $a_i$s denote positive constants. We assume the prior for $\sigma$ satisfies 
\begin{eqnarray}  
	\Pi\{ s < \sigma^{-2} < s(1+t) \} &\ge& a_6 s^{a_7} t^{a_8} \exp \{-a_9 s^{1/2}\}, \quad s > 0, \quad t \in (0,1). 
	\label{eq:asnPrior_sigma3}
\end{eqnarray}
We assign a  Dirichlet$(a/m,\ldots,a/m)$ prior for $(\alpha_1,\ldots,\alpha_m)$ given $m$, where $a > 0$ is a fixed constant, and let
\begin{equation}
	\label{eq:asnPrior_m}
	\Pi(m=i) \propto \exp(-a_{10} i (\log i)^{\tau_1}), \quad i =2, 3, \ldots, \quad \tau_1 \geq 0.
\end{equation}
Finally, we assume that the $\mu_{j}$s are independent from other parameters and across $j$, with a prior density satisfying
\begin{equation}
	\label{eq:asnPrior_mu_lb}
	\Pi(\mu_j) \ge a_{11}\exp(-a_{12} \|\mu_j\|^{\tau_2} ), \quad \tau_2 > 0. 
\end{equation}
Let $p_{\theta_0}^{(n)} =  \prod_{i=1}^n f_0(X_i)$ and $p_{\theta}^{(n)} =  \prod_{i=1}^n p(y_i \mid \psi, m)$, where $\theta = \{(\mu_j, \alpha_j)_{j=1}^{\infty}, \sigma, m \}$.   Then  
$\frac{1}{n} D^{(n)}_{\alpha}(\theta, \theta_0)  =   D_{\alpha}(f_0, p (\cdot \mid \psi, m))$, abbreviated by $D_{\alpha}(f_0, p)$.  
\begin{theorem} 
\label{th:rate_cond_dens} 
Assume $f_0$ satisfies \eqref{eq:asnE0Dff0_Lf0}--\eqref{eq:asnf0_exp_tails} and the prior distribution on $\theta$ satisfies
\eqref{eq:asnPrior_sigma3}--\eqref{eq:asnPrior_mu_lb}. Then, with $\bbP^{(n)}_{\theta_0}$ probability at least $1 - 2/\{(D-1)^2 n \varepsilon_n^2\}$, 
\begin{align*}
 \int \big\{D_{\alpha}(f_0, p)\big\} \, \Pi_{n, \alpha}(d\theta\,|\,X^{(n)}) \le \frac{D\,  \alpha + 1}{1-\alpha} \, \varepsilon_n^2,
\end{align*}
where $\varepsilon_n=Cn^{-\beta/(2\beta+d)} (\log n)^t$ with $t > t_0 +  \max \{0, (1- \tau_1)/2\}$, 
$t_0=  (ds + \max\{\tau_1,1,\tau_2/\tau\}) / (2 + d/\beta)$, 
and  $s = 1 + 1/\beta + 1/\tau$ for some constant $C > 0$. 
\end{theorem}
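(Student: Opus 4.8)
The plan is to verify the prior mass condition \eqref{eq:prior_Mass} for the specific mixture-of-finite-mixtures prior \eqref{eq:cond_mix_def}--\eqref{eq:asnPrior_mu_lb}, and then invoke Corollary~\ref{cor:renyi} (with the Bayesian critical radius version) to conclude. Concretely, I must show that for $\varepsilon_n = C n^{-\beta/(2\beta+d)}(\log n)^t$ with the stated exponent, the KL neighborhood $B_n(\theta_0,\varepsilon_n;\theta_0)$ defined in \eqref{eq:chEN} receives prior mass at least $e^{-n\varepsilon_n^2}$. Since the $\alpha$-divergence here is $\frac{1}{n}D^{(n)}_\alpha(\theta,\theta_0)=D_\alpha(f_0,p(\cdot\mid\psi,m))$, once the prior-mass condition holds the stated risk bound $\int D_\alpha(f_0,p)\,\Pi_{n,\alpha}(d\theta\mid X^{(n)})\le \frac{D\alpha+1}{1-\alpha}\varepsilon_n^2$ with the claimed probability follows directly from the ``stationary point'' form of Corollary~\ref{cor:renyi}, after checking $n\varepsilon_n^2>2$ and $D>1$.

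The bulk of the work, then, is the prior-mass lower bound, which is a now-standard but technically involved construction going back to \cite{KruijerRousseauVaart:09} and \cite{ShenTokdarGhosal2013}. First I would construct a finite mixture $p^* = p(\cdot\mid\psi^*,m^*)$ that approximates $f_0$ well: under the local H\"older assumptions \eqref{eq:asnE0Dff0_Lf0} and the exponential tail bound \eqref{eq:asnf0_exp_tails}, one can choose the common bandwidth $\sigma^* \asymp (\varepsilon_n/\log n)^{1/\beta}$ (up to logs), place $m^* \lesssim \sigma^{*-d}(\log(1/\varepsilon_n))^{d/\tau}$ component means on a fine grid covering a ball of radius $\asymp (\log(1/\varepsilon_n))^{1/\tau}$, and use a Taylor/moment-matching argument to get $\|f_0 - p^*\|_\infty$ and the relevant $f_0$-weighted ratios small enough that $D(f_0,p^*)\lesssim \varepsilon_n^2$ and $\int f_0 \log^2(f_0/p^*)\lesssim\varepsilon_n^2$. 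Then I would show the prior puts mass at least $e^{-c n\varepsilon_n^2}$ on a neighborhood of $(\psi^*,m^*)$ of sufficiently small size (in the mixing weights via the Dirichlet, in the means via \eqref{eq:asnPrior_mu_lb}, and in $\sigma^{-2}$ via \eqref{eq:asnPrior_sigma3}) that every $\theta$ in it still satisfies the two KL-type inequalities in \eqref{eq:chEN}. The dominant cost is the $m^*$-dimensional Dirichlet small-ball probability, which contributes a factor $\exp\{-C m^* \log(1/\varepsilon_n)\}$, together with the component-count prior \eqref{eq:asnPrior_m} contributing $\exp\{-a_{10}m^*(\log m^*)^{\tau_1}\}$ and the $m^*$ mean coordinates each contributing roughly $\exp\{-a_{12}(\log(1/\varepsilon_n))^{\tau_2/\tau}\}$; balancing all of these against $n\varepsilon_n^2$ yields the stated exponent $t > t_0 + \max\{0,(1-\tau_1)/2\}$ with $t_0 = (ds+\max\{\tau_1,1,\tau_2/\tau\})/(2+d/\beta)$ and $s=1+1/\beta+1/\tau$.

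The main obstacle I anticipate is controlling the KL-type quantities $\int f_0\log(f_0/p_\theta)$ and $\int f_0\log^2(f_0/p_\theta)$ in the \emph{tails} of $f_0$: because the construction only matches $f_0$ well on a compact region, one must use the exponential tail condition \eqref{eq:asnf0_exp_tails} together with the lower bound on $\sigma^{-2}$-prior mass to ensure the mixture density does not decay too fast relative to $f_0$ outside the grid, so that $\log(f_0/p_\theta)$ has controlled growth and the $f_0$-integrated (squared) log ratio remains $O(\varepsilon_n^2)$. This is exactly where the moment conditions \eqref{eq:asnE0Dff0_Lf0} and the tail exponent $\tau$ enter, and keeping the bookkeeping on the radius of the covering ball, the grid spacing, and the resulting $m^*$ consistent is the delicate part; the rest (translating small Euclidean neighborhoods of $(\psi^*,m^*)$ into prior mass, and then plugging into Corollary~\ref{cor:renyi}) is routine. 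I would largely import the relevant approximation lemmas from \cite{ShenTokdarGhosal2013} and \cite{KruijerRousseauVaart:09}, noting only the minor adjustment that the MFM prior \eqref{eq:asnPrior_m} on $m$ replaces their prior on the number of components, which changes none of the rate-determining estimates.
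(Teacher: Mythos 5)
Your proposal follows essentially the same route as the paper: verify the prior mass condition $-\log\Pi_n\big(B_n(\theta_0,\varepsilon_n;\theta_0)\big)\le n\varepsilon_n^2$ by constructing a finite location-mixture approximant with bandwidth $\sigma_n\asymp(\varepsilon_n/\log(1/\varepsilon_n))^{1/\beta}$ and $O(\sigma_n^{-d}\,\mathrm{polylog})$ components (importing the approximation and tail-control lemmas of Shen--Tokdar--Ghosal and the Dirichlet small-ball bound), account for the MFM prior on $m$, the means, and $\sigma$, and then invoke Corollary~\ref{cor:renyi}; this is exactly the paper's argument, including your anticipated handling of the log-ratio tails via \eqref{eq:asnf0_exp_tails} and the density-ratio lower bound. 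The exponent bookkeeping you sketch matches the paper's balance yielding $t_0=(ds+\max\{\tau_1,1,\tau_2/\tau\})/(2+d/\beta)$, so the plan is sound as stated.
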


%Some examples of prior distributions that satisfy \eqref{eq:asnPrior_sigma1}-\eqref{eq:asnPrior_sigma2} are inverse Gamma, exponential  and Cauchy.  
Inverse-gamma, exponential and half-Cauchy families of densities satisfy \eqref{eq:asnPrior_sigma3}. In addition to \eqref{eq:asnPrior_sigma3}, \cite{KruijerRousseauVaart:09,ShenTokdarGhosal2013} also require two-sided prior tail bounds 
\begin{eqnarray}
	\Pi( \sigma^{-2} \geq s)  &\leq& a_1 \exp \{-a_2 s^{a_3} \} \quad \text{for all sufficiently large} \, \,  s > 0, 
	\label{eq:asnPrior_sigma1} \label{eq:Kruijer} \\
	\Pi( \sigma^{-2} < s)  &\leq& a_4 s ^{a_5}  \quad \text{for all sufficiently small}  \, \,  s > 0, \label{eq:asnPrior_sigma2}
\end{eqnarray}
which impose additional restrictions for the prior choice on $\sigma$, in particular, ruling out heavy-tailed priors.  Assumption \eqref{eq:asnPrior_mu_lb} allows heavy-tailed prior distributions on the component specific means $\mu_j$s, which is not permissible in the existing theory due to the additional requirement \cite{KruijerRousseauVaart:09,ShenTokdarGhosal2013} on the tail behavior	
\begin{equation*}
	\label{eq:asnPrior_mu_tail_ub}
	1- \Pi(\mu_{j} \in [-r,r]^{d}) \leq \exp(-a_{13} r^{\tau_3}),
\end{equation*}
for some $a_{13}, \tau_3>0$ and all sufficiently large $r > 0$. 

%The usual conditionally conjugate inverse Gamma
%prior for $\sigma^2$ does not satisfy
% \eqref{eq:asnPrior_sigma3}.  \eqref{eq:asnPrior_sigma3} requires the probability to values of $\sigma$ near $0$ to be 
% higher than the corresponding probability for inverse Gamma prior for $\sigma^2$.  
%This assumption is in line with the previous work on adaptive posterior contraction rates for mixture models, see 
%\cite{KruijerRousseauVaart:09}.   However, in addition to \eqref{eq:asnPrior_sigma3},  \cite{KruijerRousseauVaart:09,ShenTokdarGhosal2013} also require the following upper and lower tail behavior for the prior on $\sigma^{-2}$
%\begin{eqnarray}
%	\Pi( \sigma^{-2} \geq s)  &\leq& a_1 \exp \{-a_2 s^{a_3} \} \quad \text{for all sufficiently large} \, \,  s > 0 
%	\label{eq:asnPrior_sigma1} \label{eq:Kruijer} \\
%	\Pi( \sigma^{-2} < s)  &\leq& a_4 s ^{a_5}  \quad \text{for all sufficiently small}  \, \,  s > 0 \label{eq:asnPrior_sigma2}
%	\end{eqnarray}
%We {\em do not} require \eqref{eq:Kruijer} - \eqref{eq:asnPrior_sigma2} here.  

%In addition to 	\eqref{eq:asnPrior_mu_lb},  \cite{KruijerRousseauVaart:09,ShenTokdarGhosal2013} require a further assumption \eqref{eq:asnPrior_mu_tail_ub} on the tail decay of the prior for $\mu_j$'s given by 	
%	\begin{equation}
%	\label{eq:asnPrior_mu_tail_ub}
%	1- \Pi(\mu_{j} \in [-r,r]^{d}) \leq \exp(-a_{13} r^{\tau_3}). 
%	\end{equation}
%	 for some $a_{13}, \tau_3>0$ and all sufficiently large $r > 0$,
%We {\em do not} require \eqref{eq:asnPrior_mu_tail_ub} to obtain our main theorem on PAC-Bayes risk bound.   

\section{Proofs}\label{section:proof}
In this section, we present proofs of our results in the main sections.
\subsection{Proof of Lemma~\ref{Lem:divergence}}
 It is immediate from the definition of $A^{(n)}_{\theta_0,\alpha}$ that $A^{(n)}_{\theta_0,\alpha}(\theta,\theta^\ast)>0$ for all $\theta\in\Theta$. We prove the second part that $A^{(n)}_{\theta_0,\alpha}(\theta,\theta^\ast)\leq 1$. Let
%\begin{eqnarray*}
%D^\ast_{\theta_0}(\theta^\ast \vert \vert \theta) = D(\bbP^{(n)}_{\theta_0} \vert \vert \bbP^{(n)}_\theta)  -  D(\bbP^{(n)}_{\theta_0} \vert \vert \bbP^{(n)}_{\theta^*}). 
%\end{eqnarray*}
\begin{eqnarray*}
D^\ast(\theta) = D(p^{(n)}_{\theta_0}, p^{(n)}_\theta)  -  D(p^{(n)}_{\theta_0}, p^{(n)}_{\theta^*}), \quad \theta \in \Theta.
\end{eqnarray*}
 By definition of $\theta^*$, $D^*(\theta) \geq 0$ for all $\theta\in\Theta$. 
 For any $\phi  \in [0, \varepsilon]$ and any $\theta\in\Theta$, define $\theta_\phi \in \Theta$ by $p_{\theta_\phi}^{(n)} =  \phi p_{\theta}^{(n)} + (1-\phi) p_{\theta^*}^{(n)}$, where $\varepsilon>0$ is some small enough constant so that $\theta_\phi \in \Theta$ for all $\phi \in [0, \varepsilon]$; existence of such $\varepsilon > 0$ is guaranteed by the assumed condition. Let the mapping $g: [0, \varepsilon] \to \mathbb{R}^+$ given by 
 \begin{eqnarray*}
 g(\phi) = D^*(\theta_\phi)  =  \int p^{(n)}_{\theta_0} \log \bigg\{ \frac{p^{(n)}_{\theta^\ast}}{p^{(n)}_{\theta_\phi}}\bigg\}  d\mu^{(n)} = 
 - \int p^{(n)}_{\theta_0} \log \bigg\{ 1+ \phi \bigg( \frac{p^{(n)}_{\theta}}{p^{(n)}_{\theta^*}}  -1 \bigg) \bigg\}  d\mu^{(n)}.
 \end{eqnarray*}
It follows that $g'(0+) = 1-  A^{(n)}_{\theta_0,1}(\theta, \theta^\ast)$. Using Jensen's inequality, we have $A^{(n)}_{\theta_0,\alpha}(\theta,\theta^\ast) \leq \big\{A^{(n)}_{\theta_0,1}(\theta,\theta^\ast)\big\}^{\alpha}$. Therefore, it suffices to show that $g'(0+)\geq 0$.

Observe from the definition of $\theta^\ast$ that $g(0) =0$ and $g(\phi) \geq 0$ for $\phi \in[0,\varepsilon]$.
Consider the function $h_x: [0, \varepsilon] \to \mathbb{R}$ for any fixed $x$  given by $\phi \mapsto  \log (1+ \phi x)/ \phi$.
 Then $h_x(\phi) \to x$ as $\phi \to 0$, and we have
 $$0 \leq \{g(\phi) - g(0)\}/\phi = - \int p^{(n)}_{\theta_0} h_{\{p^{(n)}_{\theta}/p^{(n)}_{\theta^\ast} -1\}}(\phi) d\mu^{(n)}.$$
For $x \geq 0$, $h_x$ monotonically increases to $x$ as $\phi \downarrow 0$.  We have the decomposition $$\int p^{(n)}_{\theta_0} h_{\{p^{(n)}_{\theta}/p^{(n)}_{\theta^\ast} -1\}}(\phi) d\mu^{(n)}=\int_{p^{(n)}_{\theta}\leq p^{(n)}_{\theta^\ast}} p^{(n)}_{\theta_0} h_{\{p^{(n)}_{\theta}/p^{(n)}_{\theta^\ast} -1\}}(\phi) d\mu^{(n)}+ \int_{p^{(n)}_{\theta}>p^{(n)}_{\theta^\ast}} p^{(n)}_{\theta_0} h_{\{p^{(n)}_{\theta}/p^{(n)}_{\theta^\ast} -1\}}(\phi) d\mu^{(n)}.$$
Now, we can apply the monotone convergence theorem to the first term and bounded convergence theorem to the second term in this display to obtain that $g'(0+)=\lim_{\phi\to 0+} \{g(\phi) - g(0)\}/\phi$ exists and is greater than or equal to zero.

\subsection{Proof of Theorem~\ref{Thm:contraction}}
Let us write
\begin{align*}
U_n:\,= \Big\{\theta\in\Theta:\, D^{(n)}_{\theta_0,\alpha}(\theta,\,\theta^\ast) \geq \frac{D+3t}{1-\alpha}\, n\,\varepsilon_n^2\Big\}.
\end{align*}
Then, we can express the desired posterior probability as
\begin{align}
\Pi_{n,\alpha} \Big(D^{(n)}_{\theta_0,\alpha}(\theta,\,\theta^\ast) \geq \frac{D+3t}{1-\alpha}\, n\,\varepsilon_n^2\ \Big| \, X^{(n)}\Big)  =  \frac{\int_{U_n} e^{-\alpha\, r_{n}(\theta,\,\theta^\ast)} \,\Pi_n(d\theta)}{\int_{\Theta} e^{-\alpha\, r_{n}(\theta,\,\theta^\ast)}\, \Pi_n(d\theta)}.\label{Eq:post}
\end{align} 

Let us first consider the numerator.
By the definition of the $\alpha$-divergence $D^{(n)}_{\theta_0,\alpha}(\theta,\,\theta^\ast)$, we have 
\begin{align}\label{eq:key_div}
\bbE^{(n)}_{\theta_0} e^{-\alpha\, r_{n}(\theta,\,\theta^\ast)}  = A_{\theta_0,\alpha}^{(n)}(\theta, \theta^\ast) = e^{-(1 - \alpha)\,  D^{(n)}_{\theta_0,\alpha}(\theta,\,\theta^\ast)}.
\end{align}
Now integrating both side with respect to the prior $\Pi$ over $U_n$ and applying Fubini's theorem, we can get
\begin{align*}
\bbE^{(n)}_{\theta_0} \int_{U_n} e^{-\alpha\, r_{n}(\theta,\,\theta^\ast)} \Pi_n(d\theta)  = \int_{U_n}  e^{-(1 - \alpha)\, D^{(n)}_{\theta_0,\alpha}(\theta,\,\theta^\ast)} \Pi_n(d\theta) \leq e^{-(D+3t)\, n \, \varepsilon_n^2},
\end{align*}
where the last step follows from the definition of $U_n$.
An application of the Markov inequality yields the following high probability bound for the numerator on the right hand side of~\eqref{Eq:post},
\begin{align} \label{Eqn:NumBound}
\bbP^{(n)}_{\theta_0}\Big[\int_{U_n} e^{-r_{n, \alpha}(f)} \Pi_n(df)  \geq e^{-(D+2t)\, n \, \varepsilon_n^2} \Big] \leq e^{-t\, n \, \varepsilon_n^2} \leq \frac{1}{(D-1+t)^2 \,n\,\varepsilon_n^2}.
\end{align}

Next, we consider the denominator on the right hand side of~\eqref{Eq:post}. We always have the lower bound
\begin{align*}
\int_{\Theta} e^{-\alpha\, r_{n}(\theta,\,\theta^\ast)} \Pi_n(d\theta) \geq \int_{B_{n}(\theta^\ast,\varepsilon_n; \theta_0)} e^{-\alpha\, r_{n}(\theta,\,\theta^\ast)} \Pi_n(d\theta).
\end{align*}
We invoke the following result (Lemma 8.1, \cite{ghosal2000}), which is a high probability lower bound to $ \int_{B_{n}(\theta^\ast,\varepsilon_n; \theta_0)} e^{-\alpha\, r_{n}(\theta,\,\theta^\ast)} \Pi_n(d\theta)$ (which has been adapted to the $\alpha$-fractional likelihood):
for any $D>1$, we have
\begin{align} \label{Eqn:DenBound}
\bbP^{(n)}_{\theta_0}\Big[\int_{B_{n}(\theta^\ast,\varepsilon_n; \theta_0)} e^{-\alpha\, r_{n}(\theta,\,\theta^\ast)} \Pi_n(d\theta) \leq e^{-\alpha\,( D + t) \,n\,\varepsilon_n^2}\Big] \leq \frac{1}{(D-1 +t)^2n\,\varepsilon_n^2}.
\end{align}

Now combining \eqref{Eq:post}, \eqref{Eqn:NumBound} and ~\eqref{Eqn:DenBound}, we obtain that with probability at least $1 - 2/\{(D-1+t)^2 n \varepsilon_n^2\}$,
\begin{align*}
\Pi_{n,\alpha} \Big(D^{(n)}_{\theta_0,\alpha}(\theta,\,\theta^\ast) \geq \frac{D+3t}{1-\alpha}\, n\,\varepsilon_n^2\ \Big| \, X^{(n)}\Big)
\leq  e^{-(D+2t)\, n \, \varepsilon_n^2}  e^{\alpha (D+t)\, n \, \varepsilon_n^2} \leq  e^{-t\, n \, \varepsilon_n^2}.
\end{align*}

\subsection{Proof of Corollary~\ref{Cor:contraction}}
An application of a union probability bound to Theorem~\ref{Thm:contraction} yields that 
\begin{align*}
\Pi_{n,\alpha} \Big(D^{(n)}_{\theta_0,\alpha}(\theta,\,\theta^\ast)\geq \frac{D+3j}{1-\alpha}\, n\,\varepsilon_n^2\ \Big| \, X^{(n)}\Big) \leq e^{-j\, n\, \varepsilon_n^2}, \qquad j=1,2,\ldots,
\end{align*}
holds with $\bbP_{\theta_0}^{(n)}$ probability at least $1 - 2 (n\,\varepsilon_n^2)^{-1} \sum_{j=1}^\infty 1/(D-1+j)^2 :\,=1 - C/\{n \varepsilon_n^2\}$, where the constant $C$ only depends on $D$. 
Since $\Pi_{n,\alpha} \big(D^{(n)}_{\theta_0,\alpha}(\theta,\,\theta^\ast) \geq \frac{D+3j}{1-\alpha}\, n\,\varepsilon_n^2\ \big| \,X^{(n)}\big)$ is non-increasing in $j$, we have, with $\bbP_{\theta_0}^{(n)}$ probability at least $1 - C/\{n \varepsilon_n^2\}$,
\begin{align*}
\Pi_{n,\alpha} \Big(D^{(n)}_{\theta_0,\alpha}(\theta,\,\theta^\ast)\geq \frac{D+3t}{1-\alpha}\, n\,\varepsilon_n^2\ \Big| \, X^{(n)}\Big)\leq e^{- \lfloor t \rfloor\, n\, \varepsilon_n^2} \leq  e^{-(t-1)\, n\, \varepsilon_n^2} \quad \mbox{for all $t\geq 1$}.
\end{align*}
Therefore, using this bound and applying Fubini's theorem, we obtain
\begin{align*}
\int \Big\{D^{(n)}_{\theta_0,\alpha}(\theta,\,\theta^\ast)\Big\}^k\,  \Pi_{n,\alpha} (d\theta\, \big| \, X^{(n)})  & =k \int_{0}^\infty \Pi_{n,\alpha} (D^{(n)}_{\theta_0,\alpha}(\theta,\,\theta^\ast) \geq u\, |\, X^{(n)})\, u^{k-1} du \\
& \leq \frac{k\, (D+6)^k}{(1-\alpha)^k}\, n^k\,\varepsilon_n^{2k} + k \int_{(D+6)\,n\,\varepsilon_n^2/(1-\alpha)}^\infty \Pi_{n,\alpha} (D^{(n)}_{\theta_0,\alpha}(\theta,\,\theta^\ast) \geq u\, |\, X^{(n)})\, u^{k-1} du \\
& \leq \frac{k\, (D+6)^k}{(1-\alpha)^k}\, n^2\,\varepsilon_n^{2k} + 3k\,n^k\, \varepsilon_n^{2k} e^{-n\varepsilon_n^2} \, \int_{2}^\infty e^{-(t-2)\, n\, \varepsilon_n^2} \frac{(D+3t)^{k-1}}{(1-\alpha)^k}\, dt\\
& \leq \frac{C}{(1-\alpha)^k}\, n^k\,\varepsilon_n^{2k} 
\end{align*}
for some constant $C$ depending on $k$.

\subsection{Proof of Theorem~\ref{Thm:fractional_convergence}}
Recall that the density of  $\Pi_{n,1}$ with respect to the prior $\Pi_n$ is
\begin{align*}
\frac{d\Pi_{n, \alpha}}{d\Pi_n}(\theta) = \frac{e^{-\alpha \,r_n(\theta,\theta_0)}}{\int e^{-\alpha \,r_n(\theta,\theta_0)} \Pi_n(d\theta)}.
\end{align*} 
Since $\bbE^{(n)}_{\theta_0}[\int e^{-r_n(\theta,\theta_0)} \, \Pi_n(d\theta)] = 1$, we have $\int e^{-r_n(\theta,\theta_0)} \Pi_n(d\theta) \in (0,\infty)$ almost surely. Let $E_n$ denote the set of all $X^{(n)}$ such that the integral is finite, then $\bbP^{(n)}_{\theta_0}(E_n) =1$.
We first prove that the denominator converges almost surely, that is,
\begin{align*}
\lim_{\alpha\to 1_{-}} \int e^{-\alpha \,r_n(\theta,\theta_0)}\, \Pi_n(d\theta) = \int e^{- r_n(\theta,\theta_0)} \, \Pi_n(d\theta), \quad a.s.
\end{align*}
In fact, for any $X^{(n)}\in E_n$, let 
\begin{align*}
A_n(X^{(n)}) = \{\theta\in\Theta:\, r_n(\theta,\theta_0) < 0\} \quad \mbox{and}\quad B_n(X^{(n)}) = \{\theta\in\Theta:\, r_n(\theta,\theta_0) \geq 0\},
\end{align*}
then we have the decomposition
\begin{align*}
 \int e^{-\alpha \,r_n(\theta,\theta_0)} \,\Pi_n(d\theta)  =  \int_{A_n(X^{(n)})} e^{-\alpha \,r_n(\theta,\theta_0)} \,\Pi_n(d\theta) +  \int_{B_n(X^{(n)})} e^{-\alpha\,r_n(\theta,\theta_0)} \,\Pi_n(d\theta).
\end{align*}
Since $e^{-\alpha \,s_n(f)}$ as a function of $\alpha$ is monotonically increasing in $A_n(X^{(n)})$ and bounded by $1$ in $B_n(X^{(n)})$, by applying the monotone convergence theorem for the first term and bounded convergence theorem for the second in the preceding display, we obtain
\begin{align} \label{Eqn:DEM}
\lim_{\alpha\to 1_{-}}  \int e^{-\alpha \,r_n(\theta,\theta_0)} \,\Pi_n(d\theta) =  \int e^{-r_n(\theta,\theta_0)} \,\Pi_n(d\theta) .
\end{align}
Similarly, for any  measurable set $B\in\mathcal{B}$, we have the decomposition 
\begin{align*}
 \int_B e^{-\alpha \,r_n(\theta,\theta_0)} \,\Pi_n(d\theta)  =  \int_{B\cap A_n(X^{(n)})} e^{-\alpha \,r_n(\theta,\theta_0)} \,\Pi_n(d\theta)+  \int_{B\cap B_n(X^{(n)})} e^{-\alpha \,r_n(\theta,\theta_0)} \,\Pi_n(d\theta).
\end{align*}
Therefore, by the monotone convergence theorem and the bounded convergence theorem, we have
\begin{align}\label{Eqn:NUM}
\lim_{\alpha\to 1_{-}} \int_B e^{-\alpha \,r_n(\theta,\theta_0)} \,\Pi_n(d\theta) = \int_B e^{-r_n(\theta,\theta_0)} \,\Pi_n(d\theta).
\end{align}
Combining \eqref{Eqn:DEM} and \eqref{Eqn:NUM}, we obtain that for any $X^{(n)}\in E_n$,
\begin{align*}
\Pi_{n,1}(B) = \lim_{\alpha\to 1_{-}} \Pi_{n,\alpha}(B).
\end{align*}
Since this is true for any measurable set $B\in\mathcal{B}$, we proved the theorem.

\subsection{Proof of Theorem~\ref{thm:renyi_main}}\label{Sec:Proof_thm:renyi_main}
We first state a key variational lemma that plays a critical role in the proof. 
\begin{lemma}\label{lem:var}
Let $\mu$ be a probability measure and $h$ a measurable function such that $e^h \in L_1(\mu)$. Then, 
\begin{align*}
\log \int e^h d\mu = \sup_{\rho \ll \mu} \bigg[ \int h d\rho - D(\rho \vert \vert \mu) \bigg].
\end{align*}
Further, the supremum on the right hand side is attained when 
$$
\frac{d\rho}{d\mu} = \frac{e^h}{\int e^h d \mu}.
$$
\end{lemma}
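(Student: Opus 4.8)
The final statement to prove is Lemma~\ref{lem:var}, the Donsker--Varadhan variational formula for the log-Laplace transform (equivalently, the duality between relative entropy and the cumulant generating function). Here is how I would proceed.

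\textbf{Plan.} The cleanest route is to exhibit the candidate maximizer directly and verify it attains the value, then show it is an upper bound for all competitors. Define the probability measure $\rho^\ast$ by $d\rho^\ast/d\mu = e^h / \int e^h\, d\mu$; this is well-defined as a probability measure precisely because $e^h \in L_1(\mu)$ and $e^h > 0$, so $\rho^\ast \ll \mu$. For this choice, a direct computation gives
\begin{align*}
\int h\, d\rho^\ast - D(\rho^\ast \| \mu) &= \int h\, \frac{e^h}{\int e^h d\mu}\, d\mu - \int \log\!\Big(\frac{e^h}{\int e^h d\mu}\Big) \frac{e^h}{\int e^h d\mu}\, d\mu \\
&= \int h\, d\rho^\ast - \int h\, d\rho^\ast + \log \int e^h\, d\mu = \log \int e^h\, d\mu,
\end{align*}
so the right-hand side is at least $\log \int e^h\, d\mu$. (One should note $h$ is $\rho^\ast$-integrable, or at worst handle the $+\infty$ convention gracefully; since $\int h\, d\rho^\ast$ and $D(\rho^\ast\|\mu)$ may individually be infinite, I would phrase the manipulation via $\int h\, d\rho^\ast - D(\rho^\ast\|\mu) = -\int \log(d\rho^\ast/d\mu \cdot e^{-h})\, d\rho^\ast = \log\int e^h d\mu$, which is finite by hypothesis, thereby implicitly justifying integrability.)

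\textbf{Upper bound.} For the reverse inequality, take any probability measure $\rho \ll \mu$ (if $D(\rho\|\mu) = +\infty$ the bound $\int h\,d\rho - D(\rho\|\mu) \le \log\int e^h d\mu$ is trivial, modulo again some care that $\int h\,d\rho$ is not simultaneously $+\infty$; a standard argument dispatches this). When $D(\rho\|\mu) < \infty$, write $\rho \ll \mu \ll \rho^\ast$ is not needed; instead compute
\begin{align*}
\log \int e^h\, d\mu - \Big(\int h\, d\rho - D(\rho\|\mu)\Big) = \int \log\!\Big(\frac{d\rho}{d\mu}\cdot\frac{\int e^h d\mu}{e^h}\Big)\, d\rho = \int \log\!\Big(\frac{d\rho}{d\rho^\ast}\Big)\, d\rho = D(\rho \| \rho^\ast) \ge 0,
\end{align*}
using Gibbs' inequality (nonnegativity of relative entropy, which follows from Jensen applied to the convex function $t\log t$). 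This gives $\int h\, d\rho - D(\rho\|\mu) \le \log\int e^h\, d\mu$ for all admissible $\rho$, and combined with the previous paragraph shows the supremum equals $\log\int e^h\,d\mu$ and is attained at $\rho^\ast$.

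\textbf{Main obstacle.} The substantive content is trivial once the measures are set up correctly; the only real care needed is integrability bookkeeping, ensuring each displayed rearrangement of $\int h\,d\rho - D(\rho\|\mu)$ into $-D(\rho\|\rho^\ast) + \log\int e^h d\mu$ is legitimate even when some terms are infinite. The standard device is to first prove the identity assuming $\int |h|\, d\rho < \infty$ and $D(\rho\|\mu) < \infty$, then argue the boundary cases separately: if $\int h\, d\rho = +\infty$ and $D(\rho\|\mu) = +\infty$, one can truncate $h$ at level $M$, apply the finite case, and let $M \to \infty$, using monotone/dominated convergence together with the finiteness of $\log\int e^h d\mu$ to conclude the inequality still holds (with the left side being $-\infty$ in the limit or otherwise bounded). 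This is the only place where one must be slightly delicate; everything else is a one-line Gibbs-inequality argument.
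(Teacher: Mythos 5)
Your proof is correct, and it packages the argument differently from the paper. The paper's proof of Lemma~\ref{lem:var} changes measure from $\mu$ to $\rho$ and applies Jensen's inequality once to the exponential, $\log\int e^{h-\log f_\rho}\,d\rho \ge \int (h-\log f_\rho)\,d\rho$, obtaining the upper bound directly for every $\rho\ll\mu$ and identifying the maximizer through the equality case of Jensen ($h-\log f_\rho$ constant). You instead exhibit the Gibbs measure $\rho^\ast$ up front, verify attainment by direct computation, and show the deficit for a general competitor is exactly $D(\rho\,\|\,\rho^\ast)\ge 0$; the nonnegativity you invoke is of course itself Jensen, so the two arguments are near-relatives, but your decomposition yields the sharper exact identity $\log\int e^h\,d\mu - \big(\int h\,d\rho - D(\rho\|\mu)\big) = D(\rho\|\rho^\ast)$, which also characterizes uniqueness of the maximizer, whereas the paper's one-line bound is slightly shorter but leaves the equality case implicit. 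Your attention to the $\infty-\infty$ bookkeeping (phrasing the objective as the single integral $\int\{h-\log(d\rho/d\mu)\}\,d\rho$, and truncating when $D(\rho\|\mu)=\infty$) is a genuine improvement in rigor: the paper passes over these cases silently (and its displayed penultimate term $\int \frac{d\rho}{d\mu}\,d\rho$ is missing a logarithm), so nothing in your write-up is a gap relative to what the paper itself establishes.
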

\begin{proof}
Fix $\rho \ll \mu$ and let $f_{\rho} = d\rho/d\mu$. Without loss of generality, we may assume $\mu \ll \rho$ so that $d\mu/d \rho = 1/f_{\rho}$, since otherwise we can always find a common dominating measure. Now, we have, by applying Jensen's inequality to the convex function $e^x$, that
\begin{align}\label{eq:key_divb}
\log \bigg[\int e^h d\mu \bigg] = \log \bigg[\int e^h \frac{d\mu}{d\rho} d\rho \bigg] = \log \bigg[ \int e^{h - \log f_{\rho}} d \rho \bigg] \ge \int h d\rho - \int \frac{d\rho}{d \mu} d \rho= \int h d\rho - D(\rho \vert \vert \mu).
\end{align}
Further, we have equality when $\log f_{\rho} - h$ is constant, or equivalently, when $d \rho/d \mu \propto e^h$.
\end{proof}

Return to the proof of the theorem. Recall the definition of the $\alpha$-Renyi divergence and $\alpha$-affinity that 
\begin{align*}
\bbE^{(n)}_{\theta_0} e^{-\alpha \, r_{n}(\theta,\theta_0)} = A^{(n)}_{\alpha}(\theta,\theta_0) = e^{- (1 - \alpha) D^{(n)}_{\alpha}(\theta, \theta_0) }.
\end{align*}
Thus, for any $\varepsilon \in (0, 1)$, we have
\begin{align*}
\bbE^{(n)}_{\theta_0} \exp \bigg[-\alpha \, r_{n}(\theta,\theta_0) + (1 - \alpha) D^{(n)}_{\alpha}(\theta, \theta_0) - \log(1/\varepsilon) \bigg] \le \varepsilon. 
\end{align*}
Integrating both side of this inequality with respect to $\Pi_n$ and interchanging the integrals using Fubini's theorem, we obtain
\begin{align*}
\bbE^{(n)}_{\theta_0} \int \exp\bigg[-\alpha \, r_{n}(\theta,\theta_0) + (1 - \alpha) D^{(n)}_{\alpha}(\theta, \theta_0) - \log(1/\varepsilon)\bigg] \Pi_n(d\theta) \le \varepsilon. 
\end{align*}
Now, Lemma \ref{lem:var} implies
\begin{align*}
\bbE^{(n)}_{\theta_0} \exp \sup_{\rho \ll \Pi_n} \bigg[\int \bigg\{ -\alpha \, r_{n}(\theta,\theta_0) + (1 - \alpha) D^{(n)}_{\alpha}(\theta, \theta_0)  - \log(1/\varepsilon)\bigg\} \rho(d\theta) - D(\rho \,\vert \vert \,\Pi_n)  \bigg] \le \varepsilon.
\end{align*}
If we choice $\rho(\cdot) = \Pi_{n, \alpha}(\cdot\,|\,X^{(n)})$ as the fractional posterior distribution in the preceding display, then
\begin{align*}
\bbE^{(n)}_{\theta_0} \exp \bigg[\int \bigg\{ -\alpha \, r_{n}(\theta,\theta_0) + (1 - \alpha) D^{(n)}_{\alpha}(\theta, \theta_0)  - \log(1/\varepsilon)\bigg\} \Pi_{n, \alpha}(d\theta \,|\,X^{(n)}) - D(\Pi_{n, \alpha} \,\vert \vert\, \Pi_n)  \bigg] \le \varepsilon. 
\end{align*}
By applying Markov's inequality, we further obtain that with $\bbP^{(n)}_{\theta_0}$ probability at least $(1 - \varepsilon)$, 
\begin{align*}%\label{eq:bd1}
(1 - \alpha) \int D^{(n)}_{\alpha}(\theta, \theta_0) \, \Pi_{n, \alpha}(d\theta \,|\,X^{(n)})
& \le \alpha  \int r_{n}(\theta,\theta_0) \, \Pi_{n, \alpha}(d\theta \,|\,X^{(n)})+ D(\Pi_{n, \alpha}(\cdot\,|\,X^{(n)})\, \vert \vert\, \Pi_n) + \log(1/\varepsilon).
\end{align*}
Noting the relation between $r_n(\theta,\theta_0)$ and $\Pi_{n, \alpha}(\cdot \,|\,X^{(n)})$ by \eqref{eq:frac_post} with $\theta^\dagger=\theta_0$, we can apply Lemma \ref{lem:var} with $h = - \alpha\, r_{n}(\theta,\theta_0)$ to obtain that
\begin{align*}
  \int  \alpha\, r_{n}(\theta,\theta_0) \, \Pi_{n, \alpha}(d\theta \,|\,X^{(n)})+ D(\Pi_{n, \alpha}(\cdot\,|\,X^{(n)})\, \vert \vert\, \Pi_n) 
& = \inf_{\rho \ll \Pi_n} \bigg[\int  \alpha\, r_{n}(\theta,\theta_0) \, \rho(d\theta) + D(\rho\, \vert \vert \,\Pi_n) \bigg].
\end{align*}
The second part is a direct consequence by combining the results in the two preceding displays.

\subsection{Proof of Corollary~\ref{cor:renyi}} \label{Sec:Proof_cor:renyi}
The second part is a direct consequence of the first part, which we are going to prove.
Pick $\varepsilon = e^{-\alpha n \varepsilon^2}$ in Theorem \ref{thm:renyi_main} and let $\m A \in \sigma(X_1, \ldots, X_n)$ denote the set on which \eqref{eq:renyi_bd} holds. Picking $\rho$ as $\rho^\ast=\Pi_n(\cdot \cap B_n(\theta_0,\varepsilon;\theta_0))/\Pi_n(B_n(\theta_0,\varepsilon;\theta_0))$, the restriction of the prior $\Pi_n$ to $B_n(\theta_0,\varepsilon;\theta_0)$. Let $\m A' \in \sigma(X_1, \ldots, X_n)$ denote the event in which
$$\int r_n(\theta,\theta_0)  \rho^\ast(d\theta) \le D  n \, \varepsilon^2$$
holds.
 On the set $\m A \cap \m A'$, \eqref{eq:cor_renyi} holds. It thus remains to bound $\bbP^{(n)}_{\theta_0}(\m A \cap \m A') \ge \bbP^{(n)}_{\theta_0}(\m A) + \bbP^{(n)}_{\theta_0}(\m A') - 1$ from below. 
We now show that $\bbP^{(n)}_{\theta_0}(\m A') \ge 1 - 1/\{(D-1)^2 n \varepsilon^2\}$, which completes the proof, since $e^{- \alpha n \varepsilon^2} \le 1/\{(D-1)^2 n \varepsilon^2\}$. By applying Fubini's theorem and invoking the definition of $B_n(\theta_0,\varepsilon;\theta_0)$, we have
\begin{align*}
\bbE_{\theta_0}^{(n)}\Big[\int r_n(\theta,\theta_0)  \rho^\ast(d\theta)\Big] = \int  \bbE_{\theta_0}^{(n)}\big[ r_n(\theta,\theta_0) \big]\, \rho^\ast(d\theta)\leq n\,\varepsilon^2.
\end{align*} 
Thus, by applying Cauchy-Schwarz inequality, Chebyshev's inequality and Fubini's theorem, we have
\begin{align*}
& \bbP^{(n)}_{\theta_0} \{ (\m A')^c\}
= \bbP^{(n)}_{\theta_0}\bigg( \int r_n(\theta,\theta_0)  \rho^\ast(d\theta) > Dn \varepsilon^2 \bigg) \\
& \le  \bbP^{(n)}_{\theta_0}\bigg\{  \int r_n(\theta,\theta_0)  \rho^\ast(d\theta) - \bbE_{\theta_0}^{(n)}\Big[  \int r_n(\theta,\theta_0)  \rho^\ast(d\theta)\Big]> (D-1)n \varepsilon^2\bigg\}  \\
& \le \frac{\bbE^{(n)}_{\theta_0}  \big[ \int r_n(\theta,\theta_0)  \rho^\ast(d\theta)\big]^2 }{(D-1)^2 n^2 \varepsilon^4} 
\le \frac{  \int \bbE^{(n)}_{\theta_0}\big[r_n(\theta,\theta_0)]^2\, \rho^\ast(d\theta) }{(D-1)^2 n \varepsilon^4}
\le \frac{1}{(D-1)^2n \varepsilon^2},
\end{align*}
where in the last step we used the definition of  $B_n(\theta_0,\varepsilon;\theta_0)$.

\subsection{Proof of Theorem~\ref{thm:renyi_mainb}}
The proof follows the same lines as the proof of Theorem~\ref{thm:renyi_main} in Section~\ref{Sec:Proof_thm:renyi_main}. The only difference is that we use~\eqref{eq:key_div} instead of \eqref{eq:key_divb} when applying the variational lemma to obtain the PAC-Bayes inequality. Due to space constraints, we omit the proof.

\subsection{Proof of Corollary~\ref{cor:renyi_b}}
The proof follows the same lines as the proof of Corollary~\ref{cor:renyi} in Section~\ref{Sec:Proof_cor:renyi}. The only difference is that we use $B_n(\theta^\ast,\varepsilon;\theta_0)$ instead of $B_n(\theta_0,\varepsilon;\theta_0)$ in the definition of $\rho$ and in the application of Chebyshev's inequality. Due to space constraints, we omit the proof.

\subsection{Proof of Theorem \ref{thm:conv:rate}}
In the well-specified case, we have $\mu^\ast=\mu_0$. We choose $\alpha =1/2$ so that  
$D^{(n)}_{\theta_0, 1/2}(\theta, \theta_0) = 
\norm{\mu- \mu_0}_{2,n}^2/(4\sigma^2)$ and 
\begin{align}
 \int p_{\theta_0}^{(n)} \log (p_{\theta_0}^{(n)}/p_{\theta}^{(n)}) d\mu^{(n)} \leq  \norm{\mu_0 - \mu}_{2,n}^2/(2\sigma^2), \,
\int p_{\theta_0}^{(n)} \log^2 (p_{\theta_0}^{(n)}/p_{\theta}^{(n)})  d\mu^{(n)}\leq 
 \norm{\mu_0 - \mu}_{2,n}^2/\sigma^2. 
\end{align}
$B_n(\theta_0,\varepsilon;\theta_0)$ can be re-written as 
$
\{ \norm{\mu - \mu_0}_{2,n}^2 \leq 2\sigma^2 \varepsilon_n^2 \}. 
$
It suffices to obtain a lower bound on $\Pi(\norm{\mu - \mu_0}_{\infty} < C\varepsilon)$ for some constant $C$, since it implies a same lower bound on  $\Pi(\norm{\mu - \mu_0}_{2,n} < C\varepsilon)$.  Fix $\varepsilon > 0$.  Using Lemma 4.1 of \cite{balazs2015near},  we obtain a sequence of numbers $\{p^k_j \in \mathbb{R}^d, j=1, \ldots, k\}$ and $\{q_k^j \in \mathbb{R}\, j=1, \ldots, k\}$ with $||p^k_j|| \leq A_1$ and  $|q^k_j| \leq B_1$ where $A_1$ and $B_1$ are global constants depending on $d$ such that for $k > \lceil (\varepsilon/2)^{-d/2}\rceil$, with $\tilde{\mu}^k(x) = \max_{j \in \{1, \ldots, k\}} \big[(p^k_j)^{\T} x  +  q^k_j \big]$ we have $||\mu_0- \tilde{\mu}^k||_{\infty} \leq \varepsilon/2$. Observe that for any $k > \lceil (\varepsilon/2)^{-d/2}\rceil$, there exist constants $0 < \delta_1, \delta_2 < 1$, dependent only on $d$ such that for any sequence $\{a_j^k \in \mathbb{R}^d, j=1, \ldots, k\}$ and $\{b_j^k \in \mathbb{R}, j=1, \ldots, k  \}$ satisfying 
$\max_{j \in \{1,\ldots, k\}} ||a_j^k - p_j^k|| < \delta_1 \varepsilon$ and $\max_{j \in \{1,\ldots, k\}} ||b_j^k - q_j^k|| < \delta_2 \varepsilon$,   we have $
||\mu- \tilde{\mu}^k||_{\infty} \leq \varepsilon/2$ for $\mu =   \max_{j \in \{1, \ldots, k\}} \big[(a^k_j)^{\T} x  +  b^k_j \big]$. 
This is possible since $||p^k_j|| \leq A_1$ and  $|q^k_j| \leq B_1$.  From Anderson's inequality \cite{anderson1973normal} and standard reslts on centered small ball probability of multivariate Gaussian random variables, we obtain for sufficiently small $\varepsilon$, 
\begin{eqnarray*}
 \Pi\bigg(\max_{j \in \{1,\ldots, k\}} ||a_j^k - p_j^k|| < \delta_1 \varepsilon\bigg) \geq e^{- C_1 d k \log (1/\varepsilon)}, \quad 
 \Pi\bigg(\max_{j \in \{1,\ldots, k\}} ||a_j^k - p_j^k|| < \delta_2 \varepsilon\bigg) \geq  e^{- C_2 k \log (1/\varepsilon)},
\end{eqnarray*}
To lower bound $\Pi(\norm{\mu - \mu_0}_{\infty} < C\varepsilon)$, we consider the sum over  $k \in  [\lceil (\varepsilon/2)^{-d/2}\rceil,  2\lceil (\varepsilon/2)^{-d/2}\rceil]$. Using  $\pi(k) \geq \exp\{-Ck \log k\}$, we have $\Pi(\norm{\mu - \mu_0}_{\infty} < C\varepsilon) \geq e^{- C_3 \varepsilon_n^{-d/2} \log (1/\varepsilon)}$ for sufficiently small $\epsilon$.  Hence $\Pi(\norm{\mu - \mu_0}_{\infty} < C\varepsilon_n) \geq e^{-C_4 n \varepsilon_n^2}$ is satisfied for $\varepsilon_n =  n^{-2/(4+d)} (\log n)^t$ for $t = 2/(4+d)$.

\subsection{Proof of Theorem \ref{thm:conv:rate_mis}}
We apply Corollary~\ref{cor:renyi_b} to obtain that with probability tending to one,
\begin{align}
&\int  \frac{\alpha}{2(1-\alpha)} \bigg[ \|\mu-\mu_0\|_{2,n}^2 -  \|\mu^\ast-\mu_0\|_{2,n}^2 - \alpha\,  \|\mu-\mu^\ast\|_{2,n}^2\bigg] d\Pi_{n, \alpha}(\mu)\notag \\
&\leq  \frac{D\, \alpha}{1-\alpha} \, \varepsilon^2  + \Big\{ - \frac{1}{n(1-\alpha)} \log \Pi(B_n(\mu^\ast,\varepsilon;\mu_0)) \Big\}.\label{Eqn:convex_mis}
\end{align}
By the convexity of $\mbox{Co}_L[0,1]^d$ and the definition of $\mu^\ast$ as the projection, we have that for any $\mu\in\mbox{Co}_L[0,1]^d$,
\begin{align*}
 &\|\mu-\mu_0\|_{2,n}^2 -  \|\mu^\ast-\mu_0\|_{2,n}^2 - \alpha\,  \|\mu-\mu^\ast\|_{2,n}^2 = (1-\alpha) \,  \|\mu-\mu^\ast\|_{2,n}^2 + 2\langle \mu -\mu^\ast, \mu^\ast-\mu_0\rangle_n\\
 & \geq  (1-\alpha) \,  \|\mu-\mu^\ast\|_{2,n}^2.
\end{align*}
Therefore, inequality~\eqref{Eqn:convex_mis} in particular implies
\begin{align*}
\int  \frac{\alpha}{2} \, \|\mu-\mu^\ast\|_{2,n}^2 d\Pi_{n, \alpha}(\mu)\leq  \frac{D\, \alpha}{1-\alpha} \, \varepsilon^2  + \Big\{ - \frac{1}{n(1-\alpha)} \log \Pi(B_n(\mu^\ast,\varepsilon;\mu_0)) \Big\}.
\end{align*}
Plugging this back into~\eqref{Eqn:convex_mis}, and noting that $ \|\mu^\ast-\mu_0\|_{2,n}^2$ is independent of $\mu$, we obtain
\begin{align*}
\int \|\mu-\mu_0\|_{2,n}^2 d\Pi_{n, \alpha}(\mu)\notag \leq\inf_{\mu\in \mbox{Co}_L[0, 1]^d}  \norm{\mu - \mu_0}_{2, n}^2 +  \frac{D}{(1-\alpha)}\varepsilon^2 + \Big\{  - \frac{C\,}{n\,\alpha\, (1-\alpha)} \log \Pi(B_n(\mu^\ast,\varepsilon;\mu_0)) \Big\}
\end{align*}
for some constant $C>0$.

Using the expression~\eqref{Eqn:formula_convex}, we have
\begin{align*}
B_n(\mu^\ast, \varepsilon; \mu_0) \supset \{\|\mu-\mu^\ast\|_{2,n} \leq C_1\, \varepsilon^2\},
\end{align*}
for some constant $C_1$ independent of $n$. Now use the prior concentration results derived in the proof of Theorem \ref{thm:conv:rate}, we obtain
\begin{align*}
\Pi\big(B_n(\mu^\ast, \varepsilon; \mu_0)\big) \geq  e^{-C_2\, n\, \varepsilon_n^{2} \log(1/\varepsilon)},
\end{align*}
for some constant $C_2>0$.
Putting pieces together, we obtain by choosing $\varepsilon = \varepsilon_{n}$ that
\begin{align*}
\int \|\mu-\mu_0\|_{2,n}^2 d\Pi_{n, \alpha}(\mu)\notag \leq\inf_{\mu\in \mbox{Co}_L[0, 1]^d}  \norm{\mu - \mu_0}_{2, n}^2 +  \frac{C_3 }{\alpha\, (1-\alpha)}\varepsilon_n^2,
\end{align*}
implying the desired Bayesian oracle inequality.

\subsection{Proof of Theorem \ref{thm:npreg:rate}}
Similar to the proof of Theorem \ref{thm:conv:rate}, it suffices to obtain a lower bound on $\Pi(\norm{\mu - \mu_0}_{\infty} < C_1\varepsilon)$.  From Section 5.1 (pp. 2671) of \cite{van2009adaptive}, it can be shown that for $\varepsilon$ sufficiently small
\begin{eqnarray*}
\Pi(\norm{\mu - \mu_0}_{\infty} < C_1\varepsilon)  \geq C_2 e^{-C_3(1/\varepsilon)^{d/\alpha} \{\log (1/\varepsilon) \}^{(1+d)\vee q}}.  
\end{eqnarray*}
Therefore, $\Pi(\norm{\mu - \mu_0}_{\infty} < C_1\varepsilon_n) \geq e^{-n \varepsilon_n^2}$ is satisfied with 
$\varepsilon_n \geq n^{-\alpha/(2\alpha +d)} (\log n)^t$ where $t = \{(1+d) \vee q\}/ (2 + d/\alpha)$.  
From Corollary \ref{cor:renyi} with $\alpha =1/2$ and $\varepsilon_n = n^{-\alpha/(2\alpha +d)} (\log n)^t$, we have,
\begin{align}\label{eq:cor_renyiN}
 \int \norm{\mu - \mu_0}_{2,n}^2 \Pi_{n, \alpha}(d\mu) \le  C_4 \varepsilon_n^2
\end{align}
with $\bbP^{(n)}_{\theta_0}$ probability at least $1 - 2/\{(D-1)^2 n \varepsilon_n^2\}$ for some  $D > 1$.

\subsection{Proof of Theorem \ref{th:rate_cond_dens}}
The  proof of Theorem \ref{th:rate_cond_dens}  follows in a straightforward manner from 
\cite{norets2016} with the following modifications. 
To apply Corollary \ref{cor:renyi},  we need to prove that for the $\varepsilon$ stated in the statement of Theorem  \ref{th:rate_cond_dens}, we have $-\log \Pi_n(B_{n}(\theta_0,\varepsilon_n; \theta_0))\leq  n \varepsilon_n^2$.  
For  $\sigma_n = \varepsilon_n / \log (1/ \varepsilon_n) ]^{1/\beta}$,
$\varepsilon$ defined in \eqref{eq:asnE0Dff0_Lf0},
a sufficiently small $\delta>0$,
$b$ and $\tau$ defined in \eqref{eq:asnf0_exp_tails},
$a_0 = \{ (8\beta + 4\varepsilon +16)/(b \delta)\}^{1/\tau}$,
$a_{\sigma_n} = a_0 \{\log (1/\sigma_n) \}^{1/\tau}$, 
and $b_1 > \max \{1, 1/ 2\beta \}$ satisfying $\varepsilon_n^{b_1}  \{  \log (1/ \varepsilon_n) \}^{5/4} \leq \varepsilon_n$,
the proof of Theorem 4 in \cite{ShenTokdarGhosal2013} implies the following three claims.
First,
there exists a partition of $\{y \in \mathcal{Y}: ,y, \leq a_{\sigma_n}\}$,
$\{U_j, j=1,\ldots,K\}$ such that for $j=1,\ldots,N$,
$U_j$ is a ball with diameter $\sigma_n \varepsilon_n^{2 b_1}$
and center $y_j$;
for $j=N+1,\ldots,K$,  
$U_j$ is a set with a diameter bounded above by $\sigma_n$;
$1 \leq N < K \leq C_2 \sigma_n^{-d} \{\log (1/ \varepsilon_n) \}^{d +d/\tau}$, 
where $C_2>0$ does not depend on $n$.
Second, there exist 
$\theta^\star = \{\mu_j^\star, \alpha_j^\star, j = 1,2,\ldots; \sigma_n\}$ with
$\alpha_j^\star=0$ for $j > N$, $\mu_j^\star=z_j$ for $j=1,\ldots,N$, and
$\mu_j^\star \in U_j$ for $j=N+1,\ldots,K$
 such that 
for $m=K$ and a positive constant $C_3$,
\begin{equation}
\label{eq:f0upsbeta}
h(f_0 , p(\cdot |\theta^\star, m) ) \leq C_3 \sigma_n^\beta.
\end{equation}
Third, there exists constant $B_0>0$ such that
\begin{equation}
\label{eq:P0_zgeqa}
P_0(\norm{Y} > a_{\sigma_n}) \leq B_0 \sigma_n^{4\beta + 2\varepsilon +8}.
\end{equation}
For $\theta \in S_{\theta^\star}$, where,
\begin{align*}
	S_{\theta^\star}=&\big \{
	(\mu_j, \alpha_j, \, j=1,2,\ldots; \sigma): 
	 \; \mu_j  \in U_j, j=1, \ldots, K, \\
	& \sum_{j=1}^K \abs{\alpha_j - \alpha_j^\star} \leq 2\varepsilon_n^{2db_1}, \, \min_{j=1, \ldots, K} \alpha_j \geq \varepsilon_n^{4db_1}/2, 
	%\\	& 
	\sigma^2 \in [ \sigma_n^{2}/(1+ \sigma_n^{2\beta}), \sigma_n^{2} ]
\big \},
\end{align*}
 we have 
\begin{align*}
& d_h^2(p(\cdot | \theta^\star, m),  p(\cdot | \theta, m))  \leq 
\norm{\sum_{j=1}^K \alpha_j^\star \phi_{\mu_j^\star,\sigma_n} - \sum_{j=1}^K \alpha_j \phi_{\mu_j,\sigma}}_1\\
&\leq  \sum_{j=1}^K \abs{\alpha_j^\star-\alpha_j} 
+ \sum_{j=1}^N \alpha_j^\star 
\left[\norm{\phi_{\mu_j^\star,\sigma_n} - \phi_{\mu_j,\sigma_n}}_1
+
\norm{\phi_{\mu_j,\sigma_n} - \phi_{\mu_j,\sigma}}_1 \right].
%\\
%&\leq&   2\varepsilon_n^{2d b_1} + \sum_{j=1}^N p_j \frac{\norm{\mu_j - z_j}}{\sigma}  \leq   2\varepsilon_n^{2b_1}. 
\end{align*}
For $j=1,\ldots,N$,
$
\norm{\phi_{\mu_j^\star,\sigma_n} - \phi_{\mu_j,\sigma_n}}_1 \leq ,\mu_j^\star - \mu_j,/\sigma_n \leq 
\varepsilon_n^{2b_1}
$.  Also, 
\begin{equation}
\label{eq:bd4sigmas}
\norm{\phi_{\mu_j, \sigma_n} - \phi_{\mu_j, \sigma}}_1  
\leq \sqrt{d/2} 
\abs{\frac{\sigma_n^2}{\sigma^2} - 1 - \log \frac{\sigma_n^2}{\sigma^2}}^{1/2} \leq  C_4 \sqrt{d/2} \abs{\frac{\sigma_n^2}{\sigma^2} - 1}
\lesssim  \sigma_n^{2\beta}, 
\end{equation}
where the penultimate inequality follows from the fact that 
$\abs{\log x - x +1} \leq C_4\abs{x-1}^2$ for $x$ in a neighborhood of 1 and some $C_4 > 0$.   Hence,   
 $d_h (p(\cdot | \theta, m), p(\cdot | \theta^\star, m)) \leq C_5 \sigma_n^{\beta}$for some $C_5>0$, all $\theta \in S_{\theta^\star}$, and $m=K$.

Next, for $\theta \in S_{\theta^\star}$, let us consider a lower bound on the ratio 
$p(y | \theta, m)/f_0(y)$.  
Note that $\sup_{y} f_0(y)<\infty$ and
$p(y | \theta, m) \geq \sigma^{d} p(y| \theta, m)$.
For $y \in \mathcal{Y}$ with $\norm{z} \leq a_{\sigma_n}$, there exists $J \leq K$ for which 
$,y-\mu_J , \leq \sigma_n$. Thus, 
for all sufficiently large $n$ such that $\sigma_n^2/\sigma^2 \leq 2$,
$p(y | \theta, m) \geq \min_j \alpha_j \cdot \phi_{\mu_J,\sigma} (y) \geq
[\varepsilon_n^{4db_1}/2] \cdot \sigma_n^{-d} e^{-1} / (2\pi)^{d/2}$ and 
\begin{equation}
\label{eq:lambda_def}
\frac{p(y |\theta, m)}{f_0(y)} \geq
C_6 \varepsilon_n^{4db_1} \sigma_n^{-d}, \mbox{ for some } C_6>0.
\end{equation}
For $y \in \mathcal{Y}$ with $\norm{y} > a_{\sigma_n}$, 
$\norm{ z - \mu_j}^2 \leq 2 (\norm{z}^2 + \norm{\mu}^2) \leq 4\norm{y}^2$
for all $j=1,\ldots,K$. Thus, for all sufficiently large $n$,
$
p(y | \theta, m) \geq \sigma_n^{-d} \exp (-4\norm{y}^2/\sigma_n^2) / (2\pi)^{d/2}
$ and 
\begin{equation*}
\frac{p(y | \theta, m)}{f_0(y)} \geq
C_7 \sigma_n^{-d_y} \exp (-4\norm{z}^2/\sigma_n^2), \mbox{ for some } C_7>0.
\end{equation*}

Denote the lower bound in \eqref{eq:lambda_def} by $\lambda_n$ and consider all sufficiently large $n$ such that 
$\lambda_n < e^{-1}$.
For any $\theta \in S_{\theta^\star}$, 
\begin{align*}
& \int \bigg( \log \frac{f_0(y)}{p(y|\theta, m)}\bigg)^2 
1\left \{ \frac{p(y|\theta, m)}{f_0(y)} < \lambda_n \right\}
f_0(y) g_0(x) dydx \\
& =
\int \bigg( \log \frac{f_0(y)}{p(y|\theta, m)}\bigg)^2 1\left \{ \frac{p(y|\theta, m)}{f_0(y)} < \lambda_n, , (y,x), > a_{\sigma_n} \right\} f_0(y) dy
\\
& \leq
\frac{4}{\sigma_n^4} \int_{\norm{y} > a_{\sigma_n}} 
\norm{y}^4 f_0(y) dy \leq \frac{4}{\sigma_n^4} E_0(\norm{Y}^8)^{1/2} (P_0(\norm{Y} > a_{\sigma_n}))^{1/2}  \leq C_8 \sigma_n^{2\beta + \varepsilon},
 \end{align*}
 for some constant $C_8$.  The last inequality follows from 
\eqref{eq:P0_zgeqa} and tail condition in \eqref{eq:asnf0_exp_tails}.
Also note that 
 \begin{eqnarray*}
 \log \frac{f_0(y)}{p(y | \theta, m)} 1\left\{\frac{p(y| \theta, m)}{f_0(y)} < \lambda_n \right\} \leq \left\{ \log 
\frac{f_0(y)}{p(y| \theta, m)}\right\}^2 1\left\{\frac{p(y|\theta, m)}{f_0(y)} < \lambda_n \right\}
 \end{eqnarray*}
 and, thus, 
 \begin{eqnarray*}
 \int \log \frac{f_0(y)}{p(y|x, \theta, m)} 1 \left \{ \frac{p(y|\theta, m)}{f_0(y)} < \lambda_n \right \} f_0(y) dy  \leq C_8\sigma_n^{2\beta + \varepsilon}.
 \end{eqnarray*}
From Lemma 4 of \cite{ShenTokdarGhosal2013}, both
$E_0 (\log (f_0(Y)/p(Y|\theta, m)))$ and $E_0 ([\log (f_0(Y)/p(Y|\theta, m))]^2)$ are bounded by 
$C_9 \log (1/\lambda_n)^2\sigma_n^{2\beta} \leq A\varepsilon_n^2$ for some constant $A$.  
Finally, we calculate a lower bound on the prior probability of $m = K$ and $\{\theta \in S_{\theta^\star}\}$.
By \eqref{eq:asnPrior_m}, for some $C_{10}>0$,
\begin{equation}
\Pi(m=K) \propto \exp[-a_{10} K (\log K)^{\tau_1}] \geq \exp [-C_{10}\varepsilon_n^{-d/\beta} \{\log (1/ \varepsilon_n)\}^{d + d/\beta+ d/\tau + \tau_1 }] \label{eq:KL1}. 
\end{equation}
From Lemma 10 of \cite{GhosalVandervaart:07}, for some constants $C_{11}, C_{12}>0$ and all sufficiently large $n$,
\begin{align}
&\Pi\left( \sum_{j=1}^K \abs{\alpha_j - \alpha_j^\star} \geq 2 \varepsilon_n ^{2db_1}, \min_{j=1, \ldots, K} \alpha_j \geq \varepsilon_n^{4db_1}/2 \bigg| m=K\right) \geq \exp[ -C_{11} K \log (1/\varepsilon_n)]  
\notag \\
&\geq 
\exp [-C_{12} \varepsilon_n^{-d/\beta} \{\log (1/ \varepsilon_n)\}^{d/\beta + d/\tau +d +1}].
\end{align}
For $\pi_{\mu}$ denoting the prior density of $\mu_j^y$ and some $C_{13}, C_{14}>0$, \eqref{eq:asnPrior_mu_lb} implies
\begin{align}
&\Pi(\mu_j \in U_j, j=1, \ldots, N ) \geq \{C_{13}\pi_{\mu}(a_{\sigma}) \mbox{diam}(U_1)^d \}^N
\notag \\ 
&\geq 
\exp \left [ -C_{14} \varepsilon_n^{-d/\beta}
\left \{\log (1/ \varepsilon_n)\right\}^{d + d/\beta + d/\tau +\max\{1, \tau_2/\tau\}}\right].
\label{eq:Pr_muU}
\end{align}
Assumption \eqref{eq:asnPrior_sigma3} on the prior for $\sigma$, implies
%For the assumed inverse gamma prior for $\sigma$, the mean value theorem implies
\begin{equation}
\Pi(\sigma^{-2} \in \{ \sigma_n^{-2}, \sigma_n^{-2}(1+ \sigma_n^{2\beta}) \}) \geq a_{8} \sigma_n^{-2 a_7} 
 \sigma_n^{2\beta a_8} \exp\{ - a_9\sigma_n^{-1} \} \geq
\exp\{ - C_{15}\sigma_n^{-1} \}\label{eq:KL4}. 
\end{equation} 
It follows from \eqref{eq:KL1} - \eqref{eq:KL4},  
that for all sufficiently large $n$,  $s = 1 + 1/\beta + 1/\tau$, and some $C_{16}>0$ 
\begin{eqnarray*}
\Pi(B_n(\theta_0,\varepsilon_n;\theta_0))  \geq \Pi(m=N, \theta_p \in S_{\theta_p} )  \geq 
\exp [-C_{16}\varepsilon_n^{-d/\beta} \{\log (1/ \varepsilon_n)\}^{ds + \max\{\tau_1,1,\tau_2/\tau\} }] .
\end{eqnarray*}
The last expression of the above display is bounded below by $\exp\{-C n \varepsilon_n^2 \}$ for any $C>0$,
$\varepsilon_n =  n^{-\beta/(2\beta + d)} (\log n)^t$, any $t > (ds + \max\{\tau_1,1,\tau_2/\tau\}) / (2 + d/\beta)$, and all sufficiently large $n$.  Since the inequality in the definition of $t$ is strict, the claim of the theorem follows immediately.

\bibliography{NSF_refs}

\end{document}